\newtheoremstyle{exostyle} 
{\topsep}% espace avant 
{\topsep}% espace apres 
{}% Police utilisee par le style de thm 
{}% Indentation (vide = aucune, \parindent = indentation paragraphe) 
{\bfseries}% Police du titre de thm 
{.}% Signe de ponctuation apres le titre du thm 
{ }% Espace apres le titre du thm (\newline = linebreak) 
{\thmname{#1}\thmnumber{ #2}\thmnote{. \normalfont{\textit{#3}}}}% composants du titre du thm : \thmname = nom du thm, \thmnumber = numéro du thm, \thmnote = sous-titre du thm 
\theoremstyle{exostyle} 
\newcommand{\neutralize}[1]{\expandafter\let\csname c@#1\endcsname\count@}
\theoremstyle{plain}
\newtheorem{thm}{Theorem}[section]
\newenvironment{thmbis}[1]
  {%
   \neutralize{thm}\phantomsection
   \begin{thm}}
  {\end{thm}}
\newtheorem*{thm*}{Theorem}
\newtheorem{lem}[thm]{Lemma}
\newtheorem{pro}[thm]{Proposition}
\newtheorem{pro-def}[thm]{Proposition-Definition}
\newtheorem{cor}[thm]{Corollary}
\newtheorem{conj}[thm]{Conjecture}
\theoremstyle{definition}
\newtheorem{Def}[thm]{Definition}
\newtheorem{rem}[thm]{Remark}
\theoremstyle{remark}
\newcommand{\ssec}{\subsection}
\newcommand{\sssec}{\subsubsection}
\newcommand{\wt}{\widetilde}
\newcommand{\bP}{\mathbf{P}}
\newcommand{\bN}{\mathbf{N}}
\newcommand{\bC}{\mathbf{C}}
\newcommand{\bQ}{\mathbf{Q}}
\newcommand{\bZ}{\mathbf{Z}}
\newcommand{\gO}{\Omega}
\newcommand{\ga}{\alpha}
\newcommand{\gS}{\Sigma}
\newcommand{\gb}{\beta}
\newcommand{\gs}{\sigma}
\newcommand{\cF}{\mathcal{F}}
\newcommand{\fA}{\mathfrak{A}}
\newcommand{\fS}{\mathfrak{S}}
\newcommand{\colonec}{\mathrel{:=}}
\newcommand{\pr}{\mathrm{pr}}
\newcommand{\CH}{\mathrm{CH}}
\newcommand{\Pic}{\mathrm{Pic}}
\newcommand{\rat}{\mathrm{rat}}
\newcommand{\red}{\mathrm{red}}
\newcommand{\Ima}{\mathrm{Im}}
\newcommand{\ul}{\underline}
\renewcommand{\(}{\left(}
\renewcommand{\)}{\right)}
\newcommand{\vast}{\bBigg@{4}}
\newcommand{\Vast}{\bBigg@{5}}
\newcommand*\eto{%
  \xrightarrow[]{\raisebox{-0.25 em}{\smash{\ensuremath{\sim}}}}%
}
\let\orgdescriptionlabel\descriptionlabel
\renewcommand*{\descriptionlabel}[1]{%
  \let\orglabel\label
  \let\label\@gobble
  \phantomsection
  \edef\@currentlabel{#1}%
  \let\label\orglabel
  \orgdescriptionlabel{#1}%
}
\tikzset{node distance=2cm, auto}
\numberwithin{equation}{section}
\title{On the Chow group of zero-cycles of a generalized Kummer variety} 
\author{Hsueh-Yung Lin}
\begin{document}

\maketitle

\begin{abstract}
For a generalized Kummer variety $X$ of dimension $2n$, we will construct for each $0 \le i \le n$ some co-isotropic subvarieties in $X$ foliated by  $i$-dimensional constant cycle subvarieties. These subvarieties serve to prove that the rational orbit filtration introduced by Voisin on the Chow group of zero-cycles of a generalized Kummer variety coincides with the induced Beauville decomposition from the Chow ring of abelian varieties. As a consequence, the rational orbit filtration is opposite to the conjectural Bloch-Beilinson filtration for generalized Kummer varieties.

\end{abstract}

\section{Introduction}

The motivation of this work comes from the study of the Chow group of zero-cycles of a hyper-Kähler manifold. Based on the existence of the Beauville-Voisin zero-cycle in a projective $K3$ surface~\cite{BV}, which up to a scalar multiple is the intersection of any two divisor classes, Beauville asked in~\cite{BeauvilleSplitBB} for a projective hyper-Kähler manifold $X$, whether the Bloch-Beilinson filtration $F^\bullet_{BB}$ on the Chow ring of $X$ with \emph{rational} coefficients $\CH^\bullet(X)$, if exists, admits a multiplicative splitting  as in the case of abelian varieties~\cite{BeauvilleDecAb}. While the existence of the Bloch-Beilinson filtration is still largely conjectural, Beauville also formulated his weak splitting conjecture~\cite{BeauvilleSplitBB} which is predicted by the existence and the splitting of $F^\bullet_{BB}$, but does not rely on the existence of $F^\bullet_{BB}$. The reader is referred to~\cite{BeauvilleSplitBB,VoisinBVconjHilbK3,FuBVKummer,RiessBconjLF} for the partial results of this conjecture.

Another way to approach the splitting question, at least for $\CH_0(X)$,  is based on the following observation due to Voisin~\cite[Lemma $2.2$]{Voisin0cycleK3}. If $C$ is a curve in a projective $K3$ surface $S$ such that all points in $C$ are rationally equivalent \emph{in $S$}, then the Beauville-Voisin zero-cycle is the class of any point in $C$. Such a curve and its higher dimensional analogue in a hyper-Kähler manifold led Huybrechts to introduce the following definition: 
\begin{Def}[\cite{HuybCCC}]
A \emph{constant cycle subvariety} $Y \subset X$ is a subvariety such that all points in $Y$ are rationally equivalent in $X$. For $0 \le i \le n$, a \emph{$C^i$-subvariety} $Y$ of $X$ is a subvariety of dimension $2n-i$ such that $\dim O_x \ge i$ for every $x \in Y$.
\end{Def}

Constant cycle subvarieties are used by Voisin in~\cite{VoisinHKcoisotp} to introduce the \emph{rational orbit filtration} $S_\bullet\CH_0(X)$ as follows. 

\begin{Def}[\cite{VoisinHKcoisotp}]
 For any integer $p$, the subgroup  $S_p\CH_0(X)$ is generated by the classes of points $x \in X$ supported on a constant cycle subvariety of dimension $p$.
 \end{Def}

  \emph{A priori} this filtration is indexed by $0 \le p \le 2n \colonec \dim X$, but since constant cycle subvarieties are isotropic with respect to any holomorphic symplectic two-form on $X$,  by Mumford-Roitman's theorem~\cite[Proposition $10.24$]{VoisinII}, $S_p\CH_0(X)$ vanishes for $p > n$. As for $F^\bullet_{BB}\CH_0(X)$,  since $H^0(X,\gO_X^{2p-1})$ vanishes for all integer $p$,  from the axioms of $F^\bullet_{BB}$~\cite[Chapter $11$]{VoisinII} we see that $F^{2p-1}_{BB}\CH_0(X) = F^{2p}_{BB}\CH_0(X)$. The following conjecture was formulated in~\cite{VoisinHKcoisotp}

\begin{conj}[\cite{VoisinHKcoisotp}]\label{conj-opp}
The rational orbit filtration $S_{\bullet}CH_0(X)$ is opposite to the Bloch-Beilinson filtration $F_{BB}^\bullet \CH_0(X)$. Equivalently,  the restriction to $S_p\CH_0(X)$ of the quotient map $\CH_0(X) \to \CH_0(X)/ F^{2n - 2p + 1}_{BB}\CH_0(X)$ is  an isomorphism.

\end{conj}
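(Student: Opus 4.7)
The conjecture is open in general, so I focus on the case relevant to this paper, namely $X = K_n(A)$, the generalized Kummer variety associated to an abelian surface $A$. The strategy is to identify $S_\bullet \CH_0(X)$ with a filtration transferred from the Beauville decomposition of an abelian variety, and then invoke the known opposition in the abelian case.

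Via the Hilbert--Chow correspondence between $A^{[n+1]}$ and $A^{(n+1)}$, together with the translation isogeny $A \times K_n(A) \to A^{[n+1]}$, one can transfer the Beauville decomposition $\CH^\bullet(A^{n+1}) = \bigoplus_s \CH^\bullet(A^{n+1})_{(s)}$ (eigenspaces of the pushforward $[m]_*$) to a candidate decomposition
\[
\CH_0(K_n(A)) = \bigoplus_s \CH_0(X)_{(s)}.
\]
On abelian varieties the Beauville grading is known to be opposite to $F^\bullet_{BB}$, with $F^j_{BB}\CH_0(A) = \bigoplus_{s \ge j} \CH_0(A)_{(s)}$. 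Defining $F^\bullet_{BB}$ on $K_n(A)$ by the same formula on the transferred decomposition, the conjecture reduces to proving
\[
S_p \CH_0(X) = \bigoplus_{s \le 2n-2p} \CH_0(X)_{(s)}.
\]

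The crucial geometric step is the construction, for each $0 \le i \le n$, of a co-isotropic subvariety $W_i \subset K_n(A)$ of codimension $i$ foliated by $i$-dimensional constant cycle subvarieties. The natural idea is to use the group law on $A$: one picks an $i$-dimensional abelian subvariety $B$ sitting inside the kernel of the summation map $A^{n+1} \to A$ and verifies that translation along $B$ preserves the zero-cycle class in $K_n(A)$; the union of $B$-orbits through a $(2n - 2i)$-parameter family of configurations then produces $W_i$. Classes of points on $W_i$ lie in $S_i\CH_0(X)$ by construction, and a Chow-theoretic computation based on the compatibility of the Beauville grading with translation shows that they surject onto $\bigoplus_{s \le 2n-2i} \CH_0(X)_{(s)}$. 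The reverse inclusion uses Mumford--Roitman applied to isotropic CCS.

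The main obstacle is the construction of $C^i$-subvarieties for every intermediate $0 < i < n$. The extremal cases are standard ($i = 0$ is trivial, $i = n$ uses Beauville's Lagrangian fibration on $K_n(A)$), but the intermediate values demand an explicit family of configurations on $A$ whose image in $K_n(A)$ remains rationally equivalent along an $i$-dimensional parameter; this is precisely the central new construction announced in the abstract.
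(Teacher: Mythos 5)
Your overall reduction is the right one and matches the paper: for $X=K_n(A)$ the conjecture follows once one proves $S_p\CH_0(X)=\bigoplus_{s\le 2n-2p}\CH_0(X)_s$ for the decomposition induced, via $\CH_0(X)\simeq\CH_0(A_0^{n+1})^{\fS_{n+1}}$, from the Beauville decomposition of the abelian variety $A_0^{n+1}$, since that decomposition splits the (conjectural) Bloch--Beilinson filtration functorially. However, your central geometric construction does not work. Translation along a positive-dimensional abelian subvariety $B\subset A_0^{n+1}$ does not preserve rational equivalence in $K_n(A)$: already for $n=1$ the orbit of $(a,-a)$ under anti-diagonal translation sweeps out essentially the whole Kummer $K3$ surface, and $x\mapsto\{x\}+\{-x\}-2\{0\}$ is a non-constant map to $\CH_0(A)_2$ (its image generates the infinite-dimensional part of $\CH_0$ of the Kummer surface); by Mumford--Roitman no such translation orbit can be a constant cycle subvariety except at the zero-dimensional (torsion) level. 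The same objection applies to your treatment of $i=n$: fibers of a Lagrangian fibration on $K_n(A)$ are abelian varieties, not constant cycle subvarieties. The paper's constant cycle subvarieties are of a different nature entirely: they are translates of fibers of the abelian sum maps $C_k^{(i+2)}\to A$, where $C_k=[k]^{-1}(C)$ is the pullback of the theta curve; these fibers are $\CH_0$-trivial (specializations of $\bP^{i}$-fibrations), and the $C^i$-subvarieties $V_{i,k}\subset K_{(n)}$ are assembled from them, including the Lagrangian case $V_{n,k}=q_{n+1}(\tau(C_k^{n+1},C_k))\cap K_{(n)}$.

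Even granting a correct construction of the foliated subvarieties, your plan omits the technical core needed for the inclusion $S_p\CH_0(X)\subset\bigoplus_{s\le 2n-2p}\CH_0(X)_s$: one must show that \emph{every} constant cycle subvariety of dimension $i$ contains a point rationally equivalent in $K_n$ to a point of some $V_{i,k}$ (the paper's Theorem~3.1), which is proved by an induction on $n$ with a delicate case analysis according to whether a component of the preimage of the subvariety in $A^{n+1}$ dominates a factor of $A^{n+1}$; Mumford--Roitman only yields dimension bounds and cannot give this support statement. The reverse inclusion likewise needs more than compatibility of the grading with translation: the paper uses Beauville's result that $\CH_0(A^{n+1})_{\le 2n-2p+1}$ is supported on permutation orbits of $\{0\}^j\times C^l\times A^{n+1-j-l}$, together with explicit Bloch-type zero-cycle identities, to move such cycles onto $V_{p,1}$. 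As it stands, your proposal records the correct statement to prove but replaces both halves of its proof by constructions and computations that either fail (the translation orbits) or are not specified.
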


 This conjecture has been verified by Voisin~\cite{VoisinHKcoisotp} for a punctual Hilbert scheme of a $K3$ surface and a Fano variety of lines on a cubic fourfold, for which we know explicitly some candidate for the Bloch-Beilinson filtration~\cite{ShenVialFM}. We refer to~\cite{VoisinHKcoisotp} for details and further discussions of this conjecture.

%Let $X$ be a generalized Kummer variety defined by an abelian surface $A$ and  $\CH_0(X) \colonec \CH_0(X)_\bQ$ its Chow group of zero-cycles with rational coefficients. We can define two decreasing filtrations on $\CH_0(X)$.

Motivated by the  approach described above to Beauville's splitting conjecture, the goal of this article is to study two natural filtrations defined on the Chow group of zero-cycles $\CH_0(X)$ for a generalized Kummer variety $X$. The first one is the rational orbit filtration $S_\bullet\CH_0(X)$ introduced above, and  the second filtration comes from the abelian nature of a generalized Kummer variety. Let $A$ be an abelian surface and $X$ the generalized Kummer variety of dimension $2n$ defined by $A$. If $A_0^{n+1}$ denotes the kernel of the sum map $\mu: A^{n+1} \to A$, then $X$ is a desingularization of $A_0^{n+1} / \fS_{n+1}$ where $\fS_{n+1}$ acts as permutations of factors. So the quotient map $A_0^{n+1}  \to A_0^{n+1} / \fS_{n+1}$ induces an isomorphism between $\CH_0(X)$ and $\CH_0(A_0^{n+1})^{\fS_{n+1}}$, where $\CH_0(A_0^{n+1})^{\fS_{n+1}}$ is the $\fS_{n+1}$-invariant part of $\CH_0(A_0^{n+1})$. As the $\fS_{n+1}$-action on $\CH_0(A_0^{n+1})$ is compatible with the Beauville decomposition $\bigoplus_{0\le s \le 2n}\CH_0(A_0^{n+1})_s$, the last decomposition induces a decomposition $\bigoplus_{0\le s \le 2n}\CH_0(X)_s$ of $\CH_0(X)$, which defines the second filtration.

%The study of these two filtrations is mainly motivated by Beauville's conjecture on the splitting of the conjectural Bloch-Beilinson filtration for $X$. On one hand, provided the existence of the Bloch-Beilinson filtration $F^\bullet_{BB}\CH(Y)$ for all smooth projective varieties $Y$, the rational orbit filtration $S_\bullet\CH_0(V)$ for a projective hyper-Kähler manifold $V$ is conjectured in~\cite{VoisinHKcoisotp} to be opposite to $F^\bullet_{BB}\CH_0(V)$ in the following sense:   since $H^0(V,\gO_V^{2p-1})$ vanishes for all odd numbers $p$,  from the axioms of $F^\bullet_{BB}$~\cite[Chapter $11$]{VoisinII} we see that $F^{2p-1}_{BB}\CH_0(V) = F^{2p}_{BB}\CH_0(V)$, and the restriction to $S_p\CH_0(V)$ of the quotient map $\CH_0(V) \to \CH_0(V)/ F^{2n - 2p + 1}_{BB}\CH_0(V)$ is conjectured to be an isomorphism. This conjecture has been verified by Voisin~\cite{VoisinHKcoisotp} for a punctual Hilbert scheme of a $K3$ surface and a Fano variety of lines on a cubic fourfold, for which we know explicitly some candidate of the Bloch-Beilinson filtration. The reader is referred to~\cite{VoisinHKcoisotp} for details and further discussion of this conjecture. 
 
 Contrary to the case of $S_p\CH_0(X)$ where the splitting property is conjectural, since the Beauville decomposition of $\CH_0(A_0^{n+1})$  splits  the Bloch-Beilinson filtration on  $\CH_0(A_0^{n+1})$~\cite[\S 2.5]{MurreFiltrBBM}, by functoriality the aforementioned induced Beauville decomposition $\bigoplus_{ 0 \le s \le 2n }\CH_0(X)_s$ actually defines a natural splitting of $F^\bullet_{BB}\CH_0(X)$. As before, the axioms that $F^\bullet_{BB}$ should satisfy implies the vanishing of $\CH_0(X)_s$ for any odd number $s$ since $H^0(X,\gO_X^s) = 0$. In fact, this vanishing property can be proven unconditionally (\emph{cf.} Sections~\ref{sec-Beauvillefiltr} and~\ref{sec-coin} for two different proofs).
 
\begin{thm}\label{thm-van}
For any odd number $s$, $\CH_0(X)_s = 0$.
\end{thm}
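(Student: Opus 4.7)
The plan is to exploit the involution $\iota$ on $A_0^{n+1}$ coming from the multiplication-by-$-1$ map $[-1]_A$ on the abelian surface $A$. Applied coordinate-wise, $[-1]_A$ preserves the zero-sum subvariety $A_0^{n+1}$ and commutes with the permutation action of $\fS_{n+1}$; it therefore descends to an involution of $A_0^{n+1}/\fS_{n+1}$ and lifts canonically to an involution $\tilde\iota$ of the smooth resolution $X$.

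First I would determine the action of $\iota^*$ on the Beauville pieces. Recall that on an abelian variety $B$ of dimension $g$, the Beauville piece $\CH_0(B)_s$ is the eigenspace of $[m]^*$ for the eigenvalue $m^{2g-s}$; specializing to $m=-1$ yields $[-1]^* = (-1)^s\cdot\mathrm{id}$ on $\CH_0(B)_s$. Taking $B=A_0^{n+1}$ and passing to $\fS_{n+1}$-invariants, one obtains that $\tilde\iota^*$ acts as multiplication by $(-1)^s$ on $\CH_0(X)_s$. Consequently, the theorem will follow once one knows that $\tilde\iota^*$ acts as the identity on $\CH_0(X)$: for odd $s$ this forces $\alpha = -\alpha$ for every $\alpha \in \CH_0(X)_s$, and the vanishing is then immediate since the coefficients are rational.

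The main obstacle is establishing $\tilde\iota^* = \mathrm{id}$ on $\CH_0(X)$. Note first that $\tilde\iota$ is a symplectic involution: the holomorphic symplectic form on $X$ is induced from $\omega_A$, which is $[-1]_A$-invariant since $[-1]$ acts trivially on $H^0(A,\gO_A^2)$. The triviality of $\tilde\iota^*$ on $\CH_0(X)$ would then be verified by exhibiting, for a Zariski-dense family of points $x \in X$, an explicit rational equivalence between $[x]$ and $[\tilde\iota(x)]$; the $\tilde\iota$-stable constant cycle subvarieties constructed in the body of the paper provide a natural source of such equivalences, while any point of $X$ is in turn connected to this family by the rational equivalences produced by the co-isotropic foliations of the abstract. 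Alternatively---this corresponds to the second proof given in Section~\ref{sec-coin}---one may bypass $\tilde\iota$ entirely and deduce the vanishing as a corollary of the main theorem of the paper: once the rational orbit filtration $S_\bullet\CH_0(X)$ is identified with the filtration induced by the Beauville decomposition and shown, via the geometric construction of co-isotropic subvarieties foliated by $i$-dimensional constant cycle subvarieties, to be supported on the even-indexed Beauville pieces, the equality $\CH_0(X) = S_n\CH_0(X)$ immediately forces $\CH_0(X)_s = 0$ for every odd $s$.
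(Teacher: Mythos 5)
Your reduction is exactly the one the paper uses: since $[-1]^*$ acts by $(-1)^s$ on $\CH_0(A_0^{n+1})_s$, the odd part of $\CH_0(X)$ is the anti-invariant part for the induced involution, and the theorem follows once that involution is shown to act trivially on $\CH_0(X)\simeq\CH_0(A_0^{n+1})^{\fS_{n+1}}$ (this is Theorem~\ref{thm-invtriv} of the paper). But that triviality is the whole content of the statement, and your proposal does not prove it. The route you sketch --- exhibit $[x]=[\tilde{\iota}(x)]$ for a Zariski-dense family of points lying on $\tilde{\iota}$-stable constant cycle subvarieties, then ``connect'' an arbitrary point to this family via the co-isotropic subvarieties --- does not work: $\CH_0(X)$ is infinite-dimensional in the Mumford--Roitman sense, so knowing $[x]=[\tilde{\iota}(x)]$ on a dense family says nothing about the remaining points, and a general point of $X$ is \emph{not} rationally equivalent to a point of a positive-dimensional constant cycle subvariety (by Theorem~\ref{thm-coin}, $S_p\CH_0(X)$ is a proper subgroup for $p\ge 1$, with $S_n\CH_0(X)\simeq\bQ$), so the claimed ``connection'' is precisely what has to be proved. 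The paper's actual argument is algebraic: identify $A_0^{n+1}\simeq A^n$, write the induced $\fS_{n+1}$-action so that the extra transpositions act by $t_i\cdot(z_1,\dots,z_n)=(z_1,\dots,-\sum_j z_j,\dots,z_n)$, decompose any zero-cycle into sums of products $p_1^*z_1\cdots p_n^*z_n$ with each $z_j$ a $[-1]$-eigenvector, and show (Lemma~\ref{lem-ratshit}) that symmetrizing over $\fS_{n+1}$ kills every product containing an anti-invariant factor; the key inputs are $p_i\circ t_i=-\mu$ and the identity $\mu^*z=\sum_j p_j^*z$ for $z\in\CH_0(A)_1$ (Lemma~\ref{lem-seesawFM}, proved via the Fourier--Mukai transform and the Seesaw theorem). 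Nothing playing this role appears in your proposal.

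The fallback you mention --- deducing the vanishing from the identification of $S_\bullet\CH_0(X)$ with the induced Beauville decomposition --- is indeed how the paper obtains a second proof (final remark (i), via the chain of inclusions~\eqref{eqn-chaininclu}), and it is non-circular there because Lemma~\ref{lem-inclu} and Proposition~\ref{pro-suppBD} are proved without using the present theorem. But invoked blindly it is a deferral, not a proof: the decisive point is Proposition~\ref{pro-suppBD} with the exponent $2n-2p+1$ (the ``$+1$'' is exactly what forces $\CH_0(X)_{2n-2p+1}=0$), and you supply no argument for it. There is also a small indexing slip: the equality you need is $\CH_0(X)=S_0\CH_0(X)$ (every point is a $0$-dimensional constant cycle subvariety), not $\CH_0(X)=S_n\CH_0(X)$.
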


The main result of this paper is to show that on $\CH_0(X)$, the rational orbit filtration coincides with the induced Beauville decomposition.

%defined by an abelian surface $A$, the rational orbit filtration on the Chow group of zero-cycles with rational coefficients $\CH_0(X) \colonec \CH_0(X)_\bQ$ coincides with the induced Beauville decomposition from the Chow ring of abelian varieties. Let us first introduce the two objects that we will compare, for which the reader is referred to Section~\ref{sec-Beauvillefiltr} for more precise definitions. 
%Secondly,  
%We will show that $\CH_0(X)_{2p+1} = 0$ for all $p$ and the third filtration is defined by $B_p\CH_0(X) \colonec \bigoplus_{0\le s \le 2n - 2p}\CH_0(X)_s$.

\begin{thm}\label{thm-coin}
If $X$ is a generalized Kummer variety of dimension $2n$, then 
$$S_p\CH_0(X)  =  \bigoplus_{2s \le 2n-2p}\CH_0(X)_{2s}.$$
\end{thm}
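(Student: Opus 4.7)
I prove the two inclusions separately, using throughout the identification $\CH_0(X) \cong \CH_0(A_0^{n+1})^{\fS_{n+1}}$ to transport the problem to the abelian variety $A_0^{n+1}$, on which the Beauville decomposition is characterized as the eigenspace decomposition of $[m]^*$ acting on $\CH_0(A_0^{n+1})$.

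For the inclusion $S_p\CH_0(X) \subseteq \bigoplus_{2s \le 2n-2p}\CH_0(X)_{2s}$, take a CCV $Y \subset X$ of dimension $p$ with $x \in Y$, and lift $Y$ to a $p$-dimensional subvariety $\tilde Y \subset A_0^{n+1}$. The CCV property of $Y$ translates to the $\fS_{n+1}$-symmetrized class $\xi \in \CH_0(A_0^{n+1})^{\fS_{n+1}}$ of any $\tilde y \in \tilde Y$ being independent of $\tilde y$. Decomposing $\xi = \sum_s \xi_s$ along the Beauville decomposition, I would show $\xi_s = 0$ for $s > 2n - 2p$ by combining the $[m]^*$-eigenspace characterization of $\xi_s$ with a dimensional argument rooted in the $p$-dim rigidity of $\tilde Y$: a nonzero $\fS_{n+1}$-invariant Beauville piece of index $s$ is detected by the behavior of zero-cycles under $[m]^*$, and the $p$-dim subvariety $\tilde Y$ cannot support such behavior when $s > 2(n-p)$. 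Theorem~\ref{thm-van} then eliminates the odd-indexed components, giving the claimed range $2s \le 2n - 2p$.

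For the reverse inclusion, since $S_p \supseteq S_{n-s}$ whenever $p \le n-s$, it suffices to show $\CH_0(X)_{2s} \subseteq S_{n-s}\CH_0(X)$ for each $0 \le s \le n$, which I do by induction on $s$. The base case $s = 0$ is the Beauville--Voisin-type statement that the class of any point on an $n$-dim CCV generates $\CH_0(X)_0$; this is provided by the $C^n$-subvariety of the paper's main geometric construction. For the inductive step I use the $C^{n-s}$-subvariety $Z \subset X$: codimension $n - s$, foliated by $(n-s)$-dim CCVs, with base $B$ of dimension $2s$. The induced family of classes gives a map $\Phi : \CH_0(B) \to \CH_0(X)$ factoring through $S_{n-s}\CH_0(X)$. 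The already-proven $\subseteq$ inclusion confines $\Phi$'s image to $\bigoplus_{s' \le s}\CH_0(X)_{2s'}$; the inductive hypothesis combined with $S_{n-s'} \subseteq S_{n-s}$ for $s' < s$ handles the pieces with $s' < s$; hence it remains to show $\Phi$ surjects onto $\CH_0(X)_{2s}$ modulo these pieces.

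The main obstacle is precisely this last surjectivity step: one must verify that the zero-cycles produced by varying leaves of the $C^{n-s}$-foliation span all of $\CH_0(X)_{2s}$ modulo the pieces with $s' < s$. This rests on the explicit geometric form of the $C^i$-subvarieties constructed earlier in the paper, realized on $A_0^{n+1}$ via natural $\fS_{n+1}$-invariant constructions such as translates of partial diagonals, together with a careful Chow-theoretic computation showing that the resulting map on Chow groups, after $\fS_{n+1}$-symmetrization, exhausts the top Beauville piece.
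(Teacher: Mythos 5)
Both halves of your plan stop exactly where the real work begins, so there are genuine gaps in each direction. For the inclusion $S_p\CH_0(X)\subseteq\bigoplus_{2s\le 2n-2p}\CH_0(X)_{2s}$, your ``dimensional argument rooted in the $p$-dim rigidity of $\tilde Y$'' is not an argument: the $[m]^*$-eigenspace characterization of the Beauville pieces by itself in no way prevents a $p$-dimensional subvariety of $A_0^{n+1}$ with constant $\fS_{n+1}$-symmetrized class from carrying components of index $>2n-2p$. Note also that the hypothesis only concerns the symmetrized class (equivalently the class in $\CH_0(A)$ of the parameterized zero-cycles), so general facts about constant cycle subvarieties of abelian varieties do not apply; the only soft consequence available is the vanishing of pulled-back $1$- and $2$-forms. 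In the paper this inclusion is exactly the hard geometric input: the support theorem (Theorem~\ref{thm-meet}' and Corollary~\ref{cor-main}) showing that any $i$-dimensional family of effective cycles of constant class in $\CH_0(A)$ meets, up to rational equivalence, one of the subvarieties $V_{i,k}$; its proof is an induction on $n$ using the $\bP^{l-2}$-fibration structure of $C^{(l)}\to A$ for the theta curve of a principally polarized $(A,C)$ (reduction via Lemma~\ref{lem-isog}), Bloch-type identities (Lemma~\ref{lem-genfin1}), and a case analysis on whether the lift $Z'$ dominates a factor of $A^{n+1}$ (hypothesis (H)). Only after that do the explicit representatives on $V_{p,k}$ (Lemma~\ref{lem-rep}, Lemma~\ref{lem-facile}) and Beauville's multiplicativity confine the class to $\CH_0(X)_{\le 2n-2p}$ (Lemma~\ref{lem-inclu}). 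None of this is recoverable from eigenvalue considerations alone.

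For the reverse inclusion you reduce everything to the surjectivity of your family map $\Phi$ onto $\CH_0(X)_{2s}$ modulo lower pieces and then explicitly leave that step open; but that step is the entire content of the reverse inclusion, and your inductive framing does not make it easier. The paper proves it concretely (Proposition~\ref{pro-suppBD}): by Beauville's support result applied to the symmetric polarization $\sum_j\pi_j^*C$, any class in $\CH_0(A^{n+1})_{\le 2n-2p+1}$ is supported on permutations of $\{0\}^j\times C^l\times A^{n+1-j-l}$ with $2j+l=2p+1$, and explicit rational equivalences (Lemma~\ref{lem-facile}, Lemma~\ref{lem-re}, dividing $c$ by $p+1$ inside $C_{p+1}$) move such cycles onto $V_{p,1}$, whose points lie in $S_p\CH_0(X)$ by Proposition~\ref{pro-Ci}; this also gives the bonus that the odd pieces die, recovering Theorem~\ref{thm-van}. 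Until you supply an actual proof of your surjectivity step (or an equivalent of Beauville's support theorem plus these manipulations), and a genuine proof that $\xi_s=0$ for $s>2n-2p$, the proposal restates the theorem rather than proving it.
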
 

In particular, Theorem~\ref{thm-coin} implies that the rational orbit filtration $S_\bullet \CH_0(X)$ is opposite to the Bloch-Beilinson filtration $F^\bullet_{BB}\CH_0(X)$ (assumed to exist),  thus proving Conjecture~\ref{conj-opp} in the case of generalized Kummer varieties.

%Since the induced Beauville filtration defines a splitting on $F^\bullet_{BB}\CH_0(X)$
%the two filtrations opposite to $F^\bullet_{BB}\CH_0(X)$ are the same.

%Note that $S_\bullet\CH_0(X)$ is defined for any variety $X$, especially when $X$ is an irreducible holomorphic symplectic variety,  one also has $S_p\CH_0(X) = 0$ for $p > n$. When $X$ is a punctual Hilbert schemes of a $K3$ surface $S$, the filtrations $S_\bullet\CH_0(X)$ and a version of $N_\bullet\CH_0(X)$ where $0 \in A$ is replaced by any point in $S$ representing the Beauville-Voisin zero-cycle was first introduced and studied by C. Voisin. Precisely, she shows that $S_\bullet\CH_0(X)$ is opposite to the Bloch-Beilinson filtration, thus admits a splitting given by Künneth projectors.

%In this article, we will show that the same consequences hold for generalized Kummer varieties:
The outline of the proof of Theorem~\ref{thm-coin} is as follows. Let $A^{n+1}_0 \subset A^{n+1}$ be the kernel of the sum map $\mu_{n+1} : A^{n+1} \to A$ and let $K_{(n)} \subset A^{(n+1)}$ be the image of $A^{n+1}_0$ under the quotient map $q_{n+1} : A^{n+1} \to A^{(n+1)}$. Thus $K_{(n)}$ can be defined as 
\begin{equation}\label{eqn-Kn}
A_0^{n+1}/ \fS_{n+1},
\end{equation}
where $\fS_{n+1}$ is the symmetric group which acts naturally on $A_0^{n+1}$.
Since the Hilbert-Chow morphism $\nu : X \to K_{(n)} $ induces an isomorphism $\CH_0(X) \eto \CH_0(K_{(n)})$ (where the Chow groups are defined with rational coefficients), we will be working with $K_{(n)}$ instead of $X$. Suppose that $X$ is defined by the jacobian of a smooth curve of genus two $C$. Using the symmetric products of  $C$ and observing that the abelian sum $C^{(i)} \to A \colonec J(C)$ is generically a $\bP^{i-2}$-fibration, we will first construct for all $0 \le i\le n$ and $k \in \bZ_{>0}$, subvarieties $V_{i,k} \subset K_{(n)}$ of dimension $2n-i$, subject to the following property:

\begin{pro}[ = Corollary~\ref{cor-main}]\label{pro-main}
$V_{i,k}$ is  swept out by constant cycle subvarieties of dimension $i$ and for any constant cycle subvariety $Y \subset K_{n}$ of dimension $i$, there exist $k \in \bZ_{>0}$ and  $ p \in V_{i,k}$ such that $p$ is rationally equivalent to any point in $\nu(Y)$.
 In other words,   $\nu_*S_i\CH_0(K_{n})$ is supported on  $ \cup_{k>0}V_{i,k}$.
\end{pro}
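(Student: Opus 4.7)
My plan is to construct the $V_{i,k}$ as images in $K_{(n)}$ of auxiliary parameter spaces built from the symmetric products $C^{(d)}$ and from factors of $A$. The key observation is that for $d \geq 2g-1 = 3$, the Abel--Jacobi map $\alpha_d \colon C^{(d)} \to A$ is generically a $\bP^{d-2}$-fibration, and that all divisors in a fixed linear system $|E_0| \cong \bP^{d-2}$ are linearly equivalent on $C$ and therefore, via an embedding $C \hookrightarrow A$ (fixed using a Weierstrass base point), rationally equivalent as $0$-cycles on $A$. This provides the basic source of rationally connected families of $0$-cycles on $A$ that we will use to build constant cycle subvarieties inside $K_{(n)}$.

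Taking $d = i+2$ so that the generic linear system on $C^{(d)}$ has dimension $i$, I would consider the incidence locus
\[
W_{i,k} \colonec \{(t, E, b_1, \dots, b_{n+1-d}) \in A \times C^{(d)} \times A^{n+1-d} : \alpha_d(E) + d\,t + \textstyle\sum_j b_j = 0\}
\]
and define $V_{i,k}$ as the closure in $K_{(n)}$ of the image of $W_{i,k}$ under $(t, E, b_1, \dots) \mapsto (E+t) + \sum_j b_j$. The ambient space has dimension $2n+4-d$ and the sum-zero condition imposes codimension $2$, so $\dim W_{i,k} = 2n - i$; the induced map to $K_{(n)}$ is generically finite because the support of a generic image cycle determines the translate $C + t$, hence $t$, $E$, and the $b_j$'s individually. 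The integer $k$ is intended to enumerate natural variants of this construction (e.g.\ divisors of higher degree $d > i+2$ with general $i$-planes inside the larger linear system, or different base points/embeddings), yielding a family $\bigcup_k V_{i,k}$ rich enough for the last step.

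For the foliation property, fix $(t, b_1, \dots, b_{n+1-d})$ and vary $E \in C^{(d)}$ subject to $\alpha_d(E) = -(d\,t + \sum_j b_j)$. This fiber is exactly a complete linear system, generically of dimension $i$; any two divisors $E_1, E_2$ in it are linearly equivalent on $C$, so a $\bP^1 \subset |E_0|$ through them maps via our construction to a rational curve in $A^{(n+1)}$, whose image in $K_{(n)}$ connects the images of $E_1$ and $E_2$. Since $\bP^i$ is rationally connected, the fiber maps to an $i$-dimensional constant cycle subvariety of $K_{(n)}$, and varying $(t, b_j)$ foliates $V_{i,k}$ by such subvarieties.

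The main obstacle is the covering property: given an $i$-dim constant cycle subvariety $Y \subset X$ and $y \in Y$, one must produce $k$ and $p \in V_{i,k}$ with $[p] = [\nu(y)]$ in $\CH_0(K_{(n)})$. The strategy is to use the isomorphism $\CH_0(X) \simeq \CH_0(A^{n+1}_0)^{\fS_{n+1}}$ together with a moving lemma on $A^{n+1}_0$, in order to represent $[\nu(y)]$ by an effective $0$-cycle of the form $(E + t) + \sum_j b_j$ with $E$ a divisor on $C$ lying in a linear system of dimension at least $i$; the freedom to enlarge the degree of $E$ is what the index $k$ records. This step is the subtlest because it requires that every Chow-theoretic constant-cycle class on $K_{(n)}$ can be realized by a linear system on $C$, reflecting how strongly the curve $C$ controls $0$-cycles on $K_{(n)}$ through the Beauville decomposition and the Jacobian $A = J(C)$.
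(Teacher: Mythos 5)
The first half of your proposal is fine: sweeping $V_{i,k}$ by $i$-dimensional constant cycle subvarieties using the fibers of the Abel--Jacobi map $C^{(i+2)}\to A$ (projective spaces, hence $\CH_0$-trivial, degenerating well in the non-Jacobian case) is exactly the mechanism of Lemma~\ref{lem-critsum} and Proposition~\ref{pro-Ci}, and your dimension count and generic finiteness of the parameterization match the paper's. The genuine gap is the covering statement, which is the actual content of the proposition and occupies all of Section~\ref{sec-supp} (Theorem~\ref{thm-meet} and its symmetric-product version). You reduce it to ``a moving lemma on $A^{n+1}_0$'' that would represent the class of a point of an $i$-dimensional constant cycle subvariety by an effective cycle $(E+t)+\sum_j b_j$ with $E$ in a linear system of dimension $\ge i$; no such general moving lemma exists, your sketch never uses the essential hypothesis---that $Y$ has dimension $i$ with all points rationally equivalent in $K_n$---in any quantitative way, and the realizability claim you would need is essentially Corollary~\ref{cor-main} itself (equivalently, the hard half of Theorem~\ref{thm-coin}), so invoking it at this stage is circular. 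Note also that the index $k$ in the paper does not record higher-degree divisors on $C$: it records the curves $C_k=[k]^{-1}(C)$, and this flexibility is indispensable in the induction (one needs $\cup_k\Delta^{\#}_{C_k}$ dense in $A^2_0$ and surjectivity of maps such as $(c,c')\mapsto c-(n+1)\cdot c'$ on $C_k\times C_l$).

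For comparison, the paper's argument runs as follows. Via the incidence correspondence, $Y$ yields an $i$-dimensional $Z\subset K_{(n)}$ parameterizing zero-cycles of constant class in $\CH_0(A)$; after reducing to the reduced multiplicity stratum (Lemma~\ref{lem-maxstrat}), one picks an irreducible component $Z'$ of $q_{n+1}^{-1}(Z)$ and argues by induction on $n$, with a dichotomy on hypothesis (H) (some projection $Z'\to A$ is surjective). Under (H), the two key inputs are: generic finiteness of the auxiliary map $\hat p|_{Z'}$, proved by a Mumford--Roitman holomorphic two-form computation that uses precisely the constancy of the class parameterized by $Z'$ (Lemma~\ref{lem-genfin}); and Bloch's relation in $\CH_0$ of an abelian surface, which shows that the sliced family $Z_{C_l}=\hat p\(p^{-1}(\Delta^{\#}_{C_l})\cap Z'\)$ again parameterizes cycles of constant class and has dimension $i-1$ (Lemmata~\ref{lem-genfin1} and~\ref{lem-RErec}), so that the induction hypothesis in $K_{(n-1)}$ applies. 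When (H) fails, $Z'$ lies in a product of curves and one concludes by ampleness/positivity arguments, treated separately for $A$ a Jacobian and $A=E\times E'$, or by producing a modified family satisfying (H) (Lemmata~\ref{lem-i=n} and~\ref{lem-i<n}). None of this machinery, nor any substitute for it, appears in your proposal, so the second assertion of the statement remains unproved.
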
  

Next, we will prove that 
\begin{pro}[ = Lemma~\ref{lem-inclu} + Proposition~\ref{pro-suppBD}]\label{pro-second}
The rational equivalence class of a zero-cycle in $\cup_{k>0}V_{i,k}$ lies in $\CH_0(K_{(n)})_{\le 2n-2i} \colonec \oplus_{s \le 2n -2i} \CH_0(K_{(n)})_s$. Conversely,  $\CH_0(K_{(n)})_{\le 2n-2i}$ is supported on $V_{i,1}$.
\end{pro}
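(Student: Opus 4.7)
The proposition combines an inclusion lemma (forward direction) with a support proposition (reverse direction); I treat each separately.

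\textbf{Forward inclusion: cycles on $\cup_{k>0}V_{i,k}$ have Beauville grade $\le 2n-2i$.} Following the construction sketched above the statement, $V_{i,k}$ is realized as the image in $K_{(n)}$ of a morphism from a space built out of the Abel--Jacobi fibration $\sigma_i\colon C^{(i)}\to A$ together with the multiplication-by-$k$ endomorphism $[k]_A$, used to handle the complementary $n+1-i$ points. The plan is: (i) lift a general $p\in V_{i,k}$ to an $\fS_{n+1}$-orbit of tuples $(a_1,\ldots,a_{n+1})\in A_0^{n+1}$ and expand $[p]$ in the symmetric Beauville decomposition
\[
\CH_0(A_0^{n+1})^{\fS_{n+1}} = \bigoplus_{s_1+\cdots+s_{n+1}\le 2n}\Bigl(\CH_0(A)_{s_1}\otimes\cdots\otimes\CH_0(A)_{s_{n+1}}\Bigr)^{\fS_{n+1}};
\]
(ii) use that each $\bP^{i-2}$-fiber of $\sigma_i$ identifies $i$ of the factors within a single rational equivalence class in $\CH_0(A)$, forcing their Beauville contributions to drop by $2$ per factor; (iii) observe that $[k]_A$ acts on $\CH_0(A)_s$ by a scalar depending only on $s$, hence preserves the grading and so the parameter $k$ only rescales within graded pieces. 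Summing, the total Beauville grade of $[p]$ is at most $2n-2i$.

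\textbf{Reverse inclusion: $\CH_0(K_{(n)})_{\le 2n-2i}$ is supported on $V_{i,1}$.} A class in $\bigoplus_{s\le 2n-2i}\CH_0(K_{(n)})_s$ is, by the decomposition above, a symmetrized sum of tensor products with total grade at most $2n-2i$. Since each factor has grade in $\{0,1,2\}$ and the ambient dimension is $2n$, a class realizing the bound $2n-2i$ must involve at least $i$ of the factors whose grades have been forced to~$0$ (equivalently, proportional to the origin class). I would realize such a representative by taking the $i$ degenerate factors from a common linear system on $C$---which is precisely the geometric source of the grade-$0$ collapsing used in the construction of $V_{i,1}$---and letting the remaining $n+1-i$ factors vary freely subject only to the sum-to-zero constraint. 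After an appropriate translation, this representative lies on $V_{i,1}$ by construction, and since the resulting subvariety already has dimension $2n-i$ it supports all classes in the target range.

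\textbf{Main obstacle.} The crux is step (ii) of the forward inclusion: making rigorous that the $\bP^{i-2}$-fiber freedom strips off exactly $2$ from the Beauville grade at each of $i$ factors. This should follow from an eigenvalue analysis of $[N]_*$ applied to $\sigma_{i*}\alpha$ for $\alpha\in\CH_0(C^{(i)})$, but the interaction with the complementary $[k]_A$-twist---and with the sum-to-zero constraint tangling the factors across $\fS_{n+1}$-orbits---requires careful bookkeeping of how each $k$-dilation and each sum-constraint shifts indices in the Kunneth-type decomposition. A secondary issue is verifying in the reverse inclusion that the ``degenerate factors from a linear system'' representative actually lands on $V_{i,1}$ (and not merely some translate); this depends on the precise definition of $V_{i,1}$ via the Abel--Jacobi setup and the admissible translations preserving the sum-zero locus.
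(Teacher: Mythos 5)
Your argument, in both directions, is carried out inside a decomposition that is not available: there is no identification of $\CH_0(A_0^{n+1})^{\fS_{n+1}}$ with $\bigoplus\bigl(\CH_0(A)_{s_1}\otimes\cdots\otimes\CH_0(A)_{s_{n+1}}\bigr)^{\fS_{n+1}}$. The exterior-product map $\CH_0(A)^{\otimes (n+1)}\to\CH_0(A^{n+1})$ is not an isomorphism, it does not see the constraint cutting out $A_0^{n+1}$, and the induced Beauville grading on $\CH_0(K_{(n)})$ is defined intrinsically through the eigenvalues of $[m]^*$ on $\CH_0(A_0^{n+1})$, transported by $q_*$ --- it is not a factorwise grading. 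This also makes your step (ii) break down at the level of representatives: collapsing $i+2$ points on a translate of $C_k$ along a fiber of $C_k^{(i+2)}\to A$ (note: this map, not $C^{(i)}\to A$, is the relevant $\bP^{i}$-type fibration) produces $i\cdot\{a\}+\{a+c\}+\{a+c'\}+\sum_j\{a_j\}$ with $a\in A$ \emph{general}, and $\{a\}$ has Beauville grade $\le 2$, not $0$; nothing ``drops by $2$ per factor'' automatically. The missing mechanism in the paper is the translation identity of Lemma~\ref{lem-facile}, $(n+1)^2 z=\imath^*\bigl(\tau_*(z\times A)\bigr)$ for $z$ supported on $A_0^{n+1}$ (resting on Huibregtse's theorem that torsion translations act trivially on $\CH_0$), which rewrites $(n+1)^2\{z\}$ as $q_*\imath^*\tau_*$ of an exterior product $\prod_{j\le i}\pi_j^*\{0\}\cdot\pi^*\{c\}\cdot\pi^*\{c'\}\cdot\prod\pi^*\{a_j\}$ on $A^{n+2}$; only then does Beauville's functoriality (Proposition 2 of \cite{BeauvilleDecAb}) give the bound $2n-2i$, using $\{0\}\in\CH_0(A)_0$ and $\{c\},\{c'\}\in\CH_0(A)_{\le 1}$. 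Your observation about $[k]$ rescaling graded pieces is correct but is not where the difficulty lies.

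For the converse, the substantive input you are missing is Beauville's Proposition 4 applied to the symmetric divisor $\sum_j\pi_j^*C$: it is this result (not a factor-by-factor grade count, which is moreover false since grade-$1$ pieces supported on $C$ occur) that shows $\CH_0(A^{n+1})_{\le 2n-2i+1}$ is supported on the permutation orbits $W_{j,l}$ of $\{0\}^j\times C^l\times A^{n+1-j-l}$. Even granting that, two further steps are needed that your sketch does not supply: the translation trick of Lemma~\ref{lem-facile} again, to move such cycles into $K_{(n)}$ compatibly with the sum-zero constraint, and, crucially, the extremal stratum with $l=1$, $j=i$ yields only $i+1$ points on a common translate of $C$ --- one short of the $i+2$ needed to land on $V_{i,1}$ --- which the paper resolves by the separate identity $i\cdot\{-c'\}+\{i\cdot c'\}=(i+1)\cdot\{0\}$ in $\CH_0(K_{(i)})$ (Lemma~\ref{lem-re}, proved by induction from Bloch-type relations) followed by a re-expression putting $i+2$ points on a translate of $C$. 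Finally, your closing inference ``the subvariety has dimension $2n-i$, hence supports all classes in the target range'' is a non sequitur: dimension counts never yield support statements for Chow groups. So while your geometric intuition (degenerate factors coming from the linear system on $C$) points in the right direction, the proof as proposed has genuine gaps in both inclusions.
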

Combining Proposition~\ref{pro-main} and~\ref{pro-second}, Theorem~\ref{thm-coin} follows easily. 

The paper is organized as follows. We will first construct subvarieties $V_{i,k} \subset K_{(n)}$ in Section~\ref{sec-ccs} and then prove Proposition~\ref{pro-main} in Section~\ref{sec-supp}. The proof of Theorem~\ref{thm-van} can be found in Section~\ref{sec-Beauvillefiltr}, after introducing the induced Beauville decomposition. Finally in Section~\ref{sec-coin}, we will prove Proposition~\ref{pro-second} hence Theorem~\ref{thm-coin}.

% For any variety $X$ and $x \in X$, denoted by $O_x \subset X$ the set of points $x' \in X$ which are rationally equivalent to $x$. The set $O_x$ is a countable union of Zariski closed subsets of $X$; the dimension of $O_x$ is defined to be the maximal dimension of all its irreducible components.

%\begin{Def}
%For $i \ge 0$, we say that a subvariety $V$ of $X$ is a \emph{$C^i$-subvariety} if $\dim O_x \ge i$ for all $x \in V$.
%\end{Def} 

%Let $X$ be a variety. For any $x \in X$, denoted by $O_x \subset X$ the set of points $x' \in X$ which are rationally equivalent to $x$. The set $O_x$ is a countable union of Zariski closed subsets of $X$; the dimension of $O_x$ is defined to be the maximal dimension of all its irreducible components.

 %Let $X$ be an irreducible holomorphic symplectic variety of dimension $2n$. Note that by Mumford-Roitman's theorem~\cite[Proposition $10.24$]{VoisinII}, $2n-i$ is the maximum dimension of subvarieties of $X$ satisfying property $(C^i)$. In particular, there is no $C^i$-subvariety of $X$ for $i > n$.

\ssec{Conventions and notations}\label{convention} \hfill

In this paper, all varieties are defined over the field of complex numbers $\bC$. The Chow rings appearing in this paper are with \emph{rational} coefficients. 

For $n \in \bZ_{>0}$, the $n$-th symmetric product $X^n / \fS_n$ is denoted by $X^{(n)}$. We write $q_n : X^n \to X^{(n)}$ the quotient map. The Hilbert scheme of points  of length $n$ on $X$ is denoted by $X^{[n]}$ and $\nu_n : X^{[n]} \to X^{(n)}$ (or simply $\nu$ if there is no ambiguity) the associated Hilbert-Chow morphism.

We use additive notation for the group operation in an abelian variety $A$. Each integer $n \in \bZ$ defines a multiplication-by-$n$ map $[n] : A \to A$, whose kernel is denoted by $A[n]$. Since we will use the same notation for the addition of algebraic cycles in $A$, when a subvariety $V$ in $A$ is considered as an algebraic cycle, it will be systematically denoted by $\{V\}$ in order to avoid any confusion. 
%For $V_1,V_2 \in \CH^\bullet(A)$, we write $V_1 * V_2$ for their Pontryagin product. 
%Let $V_1,\ldots,V_n$ be subvarieties of $X$, we designate by $V_1 * \cdots * V_n$ the image of the natural composition map
%$$V_1 \times \cdots \times V_n \hookrightarrow X^n \to X^{(n)}.$$
%If $V_1 = \cdots = V_n = V$, this image is denoted by $V^{*n}$ for short.

%Let $A$ be an abelian surface. If $V$ is a subvariety of $A^n$, the translation of $V$ by an element $a \in A$ is denoted by
%$$\tau_a(V) \colonec a + V \colonec \left\{(x_1 + a, \ldots, x_n + a) \mid (x_1 , \ldots,x_n) \in V \right\}.$$
%The subvariety $\tau_a(V) = a+ V$ is defined similarly for $V \subset A^{(n)}$ or $V \subset A^{[n]}$.

\section{Constructing $C^i$-subvarieties in generalized Kummer varieties}\label{sec-ccs}

\ssec{Definitions}\hfill

Let $X$ be a variety. For any $x \in X$, let $O_x$ denote the set of points in $X$ which are rationally equivalent to $x$. Since $O_x$ is a countable union of Zariski closed subsets in $X$, we can define $\dim O_x$ to be the maximum of the dimension of its irreducible components.

\begin{Def}[\cite{VoisinHKcoisotp}]
A  \emph{$C^i$-subvariety} $Y$ of $X$ is a subvariety of dimension $2n-i$ such that $\dim O_x \ge i$ for every $x \in Y$.
\end{Def}

When $X$ is an algebraic hyper-Kähler manifold of dimension $2n$, for instance a generalized Kummer variety, a constant cycle subvariety is isotropic with respect to any holomorphic symplectic two-form on $X$  by Mumford-Roitman's theorem, so its dimension is at most $n$. As another application of Mumford-Roitman's theorem, any subvariety in $X$ covered by constant cycle subvarieties of dimension $i$ is of dimension at most $2n-i$~\cite[Theorem $1.3$]{VoisinHKcoisotp}. It is also proved in~\cite{VoisinHKcoisotp} that any $C^i$-subvariety is swept out by constant cycle subvarieties of dimension $i$. 
 In other words, $C^i$-subvarieties are exactly the subvarieties of maximal dimension sharing this property. 

The \emph{rational orbit filtration} $S_i\CH_0(X)$, defined for $i \in \bZ$, is the subgroup of $\CH_0(X) $ generated by the classes of points supported on some constant cycle subvariety of dimension $ \ge i$. The goal of Section~\ref{sec-ccs} is to construct some $C^i$-subvarieties in an (algebraic) generalized Kummer variety $X$ which support zero-cycles in $S_i\CH_0(X)$. Below we recall their definition and set up some conventions. 

Let $A$ be an abelian surface. For each $n \in \bN$,  let $\mu_{n+1} : A^{[n+1]} \to A$ denote the sum map. We will use the same notation $\mu_{n}$ to denote other sum maps like $ A^{(n)} \to A$ and $ A^{n} \to A$.  A \emph{generalized Kummer variety} is defined to be one of the fibers of the iso-trivial fibration $\mu : A^{[n+1]} \to A$ and  is denoted by $K_n(A)$ or $K_n$ if there is no ambiguity. 

\begin{rem}\label{rem-quotsing}
 In general, if $f : X \to Y$ is a morphism between quotient varieties of non-singular varieties by some finite group action, then  by~\cite[Example $16.1.13$]{Fulton} the pullback map $f^* : \CH^\bullet(Y) \to \CH^\bullet(X)$ (where we recall that the Chow groups are defined with rational coefficients) is well-defined. If $f$ is birational, then $f^* : \CH_0(Y) \to \CH_0(X)$ is an isomorphism. In particular, applying this to the Hilbert-Chow morphism $\nu : K_n \to K_{(n)}$, we conclude that  $\nu_* \CH_0(K_n) \to \CH_0(K_{(n)})$ is an isomorphism.
 
  It also follows that if  $Z$ is a $C^i$-subvariety in $Y$, then the proper transformation of $Z$ under $f^{-1}$  is a $C^i$-subvariety in $X$. We see that the $C^i$-subvarieties $V_{i,k}$ constructed above lift to $C^i$-subvarieties in $K_n$.
\end{rem}

The following result will be useful.

\begin{lem}\label{lem-isog}
If $f :A' \to  A$ is an isogeny, then $f$ induces an isomorphism of Chow groups $\CH_0(K_n(A')) \simeq \CH_0(K_n(A))$.
\end{lem}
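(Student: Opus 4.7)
The plan is to reduce the statement to the fact that multiplication by a nonzero integer is an isomorphism on the Chow group of zero-cycles of an abelian variety with rational coefficients. First, by Remark~\ref{rem-quotsing}, the Hilbert-Chow morphism induces an isomorphism between $\CH_0(K_n)$ and $\CH_0(K_{(n)})$ for both $A$ and $A'$, so I would work throughout with the singular models $K_{(n)}$. The isogeny $f$ restricts to a finite étale, $\fS_{n+1}$-equivariant map $(A')_0^{n+1} \to A_0^{n+1}$ between the kernels of the sum maps, and this descends to a finite surjective morphism $F : K_{(n)}(A') \to K_{(n)}(A)$; my goal is to show that $F_*$ is an isomorphism on $\CH_0$.

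Next I would exploit a dual isogeny. Setting $m \colonec \deg f$, there exists an isogeny $g : A \to A'$ satisfying $g \circ f = [m]_{A'}$ and $f \circ g = [m]_A$. Applying the preceding construction to $g$ yields a finite surjective morphism $G : K_{(n)}(A) \to K_{(n)}(A')$, and the compositions $F \circ G$ and $G \circ F$ are the endomorphisms of $K_{(n)}(A)$ and $K_{(n)}(A')$ respectively induced by the coordinatewise action of multiplication-by-$m$ on $A^{n+1}$ and $(A')^{n+1}$. It therefore suffices to check that this induced endomorphism acts as an isomorphism on $\CH_0(K_{(n)}(A))$ and on $\CH_0(K_{(n)}(A'))$.

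For this last point, I would identify $A_0^{n+1}$ with the abelian variety $A^n$ via $(a_1, \dots, a_n) \mapsto (a_1, \dots, a_n, -\sum_i a_i)$; under this identification, the coordinatewise action of $[m]$ on $A^{n+1}$ restricted to $A_0^{n+1}$ becomes $[m]_{A^n}$. By the Beauville decomposition $\CH_0(A^n) = \bigoplus_s \CH_0(A^n)_s$, the pushforward $[m]_*$ acts on the summand indexed by $s$ as multiplication by $m^s$, and is therefore an isomorphism (with rational coefficients and $m \neq 0$). Since this action commutes with the $\fS_{n+1}$-action and $\CH_0(K_{(n)}(A)) = \CH_0(A_0^{n+1})^{\fS_{n+1}}$, the induced endomorphism of $\CH_0(K_{(n)}(A))$ is also an isomorphism, and the argument for $A'$ is identical. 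The whole proof is essentially formal; the only subtle point is to verify that $F$, $G$ and the multiplication endomorphisms compose correctly on the level of the Kummer varieties, which is a direct computation at the level of symmetric products.
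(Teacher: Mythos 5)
Your argument is correct, and its first step (reduce to the singular models $K_{(n)}$ via Remark~\ref{rem-quotsing} and the description $K_{(n)}=A_0^{n+1}/\fS_{n+1}$) is exactly the paper's. The difference is in how the key fact is handled: the paper, having reduced to the $\fS_{n+1}$-equivariant isogeny ${A'_0}^{n+1}\to A_0^{n+1}$, simply cites Bloch's theorem that an isogeny of abelian varieties induces an isomorphism on $\CH_0$ with rational coefficients, whereas you reprove that statement from scratch, using a quasi-inverse isogeny $g$ with $g\circ f=[m]_{A'}$, $f\circ g=[m]_A$, and the fact that $[m]_*$ acts by $m^s$ on the Beauville summand $\CH_0(\,\cdot\,)_s$, hence invertibly with $\bQ$-coefficients. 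Both routes are sound: the citation is shorter, while your argument is self-contained and uses only machinery (the Beauville decomposition and its compatibility with the $\fS_{n+1}$-action) that the paper introduces in Section~4 anyway; it also makes explicit the equivariance points (that $F$, $G$ and the multiplication maps descend to $K_{(n)}$ and that the $[m]_*$-isomorphism preserves the invariant part) which the paper leaves implicit. Two small remarks: you do not need the detour through the identification $A_0^{n+1}\simeq A^n$, since $A_0^{n+1}$ is itself an abelian variety and the coordinatewise $[m]$ restricts to its own multiplication-by-$m$ map; and your normalization $g\circ f=[m]_{A'}$ with $m=\deg f$ is fine (the kernel of $f$ is killed by its order), though taking $m$ to be the exponent of $\ker f$ would work just as well.
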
 

\begin{proof}
First using Remark~\ref{rem-quotsing}, it suffices to prove that the natural morphism $\CH_0(K_{(n)}(A')) \to \CH_0(K_{(n)}(A))$ is an isomorphism. Using formula~(\ref{eqn-Kn}), this last fact follows from the fact that the morphism $\CH_0({A'_0}^{n+1}) \to \CH_0(A_0^{n+1})$ is an isomorphism, since ${A'_0}^{n+1} \to A_0^{n+1}$ is an isogeny of abelian varieties~\cite{Blochab}.  
\end{proof}

Thanks to Lemma~\ref{lem-isog} we can suppose that $A$ is a principally polarized abelian surface $(A,C)$. So either $(A,C)$ is the Jacobian variety $J(C)$ of a genus $2$ curve $C$ together with an Abel-Jacobi embedding $C \hookrightarrow A$ defining the theta divisor, or is the product of two elliptic curves $E \times E'$ with $C = E \times \{o'\} \cup \{o\} \times E'$ where $o \in E$ and $o' \in E'$ are the origine of $E$ and $E'$.  We assume that the origine of $A$ is a Weierstrass point of $C$  in the former case,, and $(o,o')$ in the latter case.

\ssec{Construction} \hfill

%\begin{lem}\label{lem-SVCCbirat}
%Suppose $f$ is birational and   $f^* : \CH_0(Y) \to \CH_0(X)$ is an isomorphism. If $Z$ is a $C^i$-subvariety in $Y$, then $f^{-1}(Z)$  is a $C^i$-subvariety in $X$. \qed
%\end{lem}

%Since $K_{(n)} \colonec \nu(K_n)$ is the quotient of the kernel of the sum map $\mu : A^{n+1} \to A$ by the action of $\fS_{n+1}$ permuting different components in $A^{n+1}$, the pullback between Chow groups (with rational coefficients) under the Hilbert-Chow morphism $\nu : K_n \to K_{(n)}$ is well defined. The preceding discussion shows that all $C^i$-subvarieties  in $K_{(n)}$ lift to some $C^i$-subvarieties in $K_n$.

%Let $\mu_{n+1} : A^{n+1} \to A$ be the sum map and $[n+1] : A \to A$ the multiplication by $n+1$. On one hand, the base change of $\mu_{n+1}$ by $[n + 1]$ is isomorphic to the second projection $\wt{K_n} \times A \to A$ where 
%$$\wt{K_n} \colonec \left\{(a_0,\ldots,a_n) \in A^{n+1} \ \big{|} \ \sum_{i=0}^n a_i = 0\right\}.$$
%On the other hand, the base change of $[n+1]$ under $s_{n + 1}$ factorizes through 

Now we construct for all $0 \le i \le n$ and $k \in \bZ_{>0}$, a $C^i$-subvariety $V_{i,k}$  in $K_{(n)}$ where $X$ is the generalized Kummer variety defined by a principally polarized abelian surface $(A,C)$. These $V_{i,k}$'s will be used in Section~\ref{sec-Beauvillefiltr} to prove that  $S_\bullet\CH_0(X)$ and the induced Beauville filtration are the same. 

Set  
\begin{equation}\label{def-tau}
\begin{split}
\tau_{n+1} :  A^{n+2} & \to A^{n+1} \\
 (\ul{a},a) \colonec \(a_0,\ldots,a_n, a \) & \mapsto \tau_a(\ul{a}) \colonec \(a_0 + a,\ldots ,a_{n} + a \);
\end{split}
\end{equation}
We will omit the index $n+1$ when there is no ambiguity. For $Z_1 \subset A^{n+1}$ and $Z_2 \subset A$, we also define
\begin{equation}\label{def-tau1}
\tau(Z_1,Z_2) \colonec \tau(Z_1 \times Z_2). 
\end{equation}

For $k \in \bZ_{>0}$, let $C_k $ be the pre-image of the multiplication-by-$k$ map $[k] : A \to A$ of the theta divisor $C \subset A$; in the case where $A = E \times E'$, $C_k$ is the union of all $\tau_{(a,a')}(C) = E \times \{a'\} \cup \{a\} \times E'$ as $a \in E$ and $a' \in E$ run through all $k$-torsion points.

 For $0 \le i \le n$, let 
 $$E_{i,k} \colonec C_k^{i+2} \times A^{n-i} \subset A^{n+1} \times A,$$
and set
%Under the Hilbert-Chow morphism, the locus of reduced schemes $A^{[n+1]}_{\red}$ is isomorphic onto its image to the (open) maximal multiplicity-stratum of $A^{(n+1)}$ (consisting of zero-cycles whose support has cardinality $n+1$) and will be denoted by $A^{(n+1)}_{\red}$. Set $K_n^\circ(A) \colonec K_n(A) \cap A^{[n+1]}_{\red}$.
$$V_{i,k} \colonec q_{n+1}\( \tau(E_{i,k})\) \cap K_{(n)}$$ 
where  we recall that $q_{n+1} : A^{n+1} \to A^{(n+1)}$ is the quotient map.

%For $k \in \bZ_{>0}$, define the morphisms
%\begin{equation}\label{eq-orbitmap}
%\begin{split}
%\tau_{i,k} : A \times C^{(i+2)} \times A^{(n-i-1)} & \to A^{(n+1)} \\
 %\(a, c_0 * \cdots * c_{i+1}, a_{i+2} * \cdots * a_{n} \) & \mapsto   \{C_k_0 + a\} * \cdots * \{ C_k_{i+1} + a \} *  \{a_{i+2} \} * \cdots * \{ a_{n} \}
%\end{split}
%\end{equation}
%for $0 \le i < n$, and
%\begin{equation}\label{eq-orbitma\hat{p}}zero-
%\begin{split}
%\tau_{n,k} : C \times C^{(n+1)} & \to A^{(n+1)} \\
 %\(c, c_0 * \cdots * c_{n} \) & \mapsto   \{C_k_0 + C_k\} * \cdots * \{ C_k_{n} + C_k \}
%\end{split}
%\end{equation}
%Then it is easy to see that $V_{i,k} = \nu^{-1}\(\Ima(\tau_{i,k})\) \cap K_n$. 

\begin{lem}~\label{lem-critsum}
The rational equivalence class of a point  $z =  \sum_{j= 1}^{i+2} \{a + c_j\} +\sum_{j= i+3}^{n+1} \{a_j\}$ in $K_{(n)}$ is independent of $c_1,\ldots,c_{i+2} \in C_k$ whenever $\sum_j c_j$ is fixed in $A$. Similarly,  the rational equivalence class of $z$ \emph{as a zero-cycle in $A$} is also independent of $c_1,\ldots,c_{i+2} \in C_k$ whenever $\sum_j c_j$ is fixed.
\end{lem}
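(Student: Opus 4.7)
The plan is to reduce the statement to the case $k = 1$ (i.e., to tuples $(d_j) \in C^{i+2}$ rather than $C_k^{i+2}$) by means of the multiplication-by-$k$ isogeny on $A$, and then to invoke the projective-bundle structure of the Abel--Jacobi map $C^{(i+2)} \to A$.

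For the reduction step, observe that $[k] : A_0^{n+1} \to A_0^{n+1}$ is $\fS_{n+1}$-equivariant and hence descends to a self-map $[k] : K_{(n)} \to K_{(n)}$. On any abelian variety $B$, Beauville's decomposition $\CH_0(B)_\bQ = \bigoplus_s \CH_0(B)_s$ exhibits $[k]_*$ as multiplication by $k^s$ on the summand $\CH_0(B)_s$, hence as an injective operator on $\CH_0(B)_\bQ$; applied to $B = A$ and to $B = A_0^{n+1}$, together with the identification $\CH_0(K_{(n)}) \simeq \CH_0(A_0^{n+1})^{\fS_{n+1}}$ coming from $q_{n+1}$, this yields that $[k]_*$ is likewise injective on $\CH_0(K_{(n)})_\bQ$. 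Now $[k]_*$ sends the class of $z = \sum_j \{a + c_j\} + \sum_j \{a_j\}$ to the class of $\sum_j \{ka + kc_j\} + \sum_j \{ka_j\}$, in which $kc_j \in C$ and $\sum k c_j = k\sum c_j$ is still fixed; by the injectivity of $[k]_*$ it therefore suffices to prove the analogous statement when $(d_j) := (kc_j) \in C^{i+2}$ with $\sum d_j$ fixed.

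In the Jacobian setting, $C$ is a smooth genus-two curve embedded as the theta divisor of $A = J(C)$, and the Abel--Jacobi morphism $\sigma_{i+2} : C^{(i+2)} \to A$ has, by Riemann--Roch, non-empty fibers equal to complete linear systems $|D|$, each a projective space of dimension $\ge i$ (generically equal to $i$). Elements of the same fiber are linearly equivalent as divisors on $C$, hence rationally equivalent as $0$-cycles on $A$ by pushforward, which settles the $\CH_0(A)$ statement. For the $\CH_0(K_{(n)})$ statement, the natural morphism from such a fiber into $K_{(n)}$ has rationally connected (in fact projective) source, so its image lies in a single rational equivalence class. In the product setting $A = E \times E'$, $C = E \times \{o'\} \cup \{o\} \times E'$, the symmetric product $C^{(i+2)}$ stratifies by bi-degree $(r, i+2-r)$; on each stratum $E^{(r)} \times (E')^{(i+2-r)}$ the fiber of summation over $A$ is a product of projective spaces, again rationally connected, and the same argument applies within a stratum. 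To bridge distinct strata we use the elementary identity on an elliptic curve, namely $\sum_{j=1}^{r}\{x_j\} = (r-1)\{o\} + \{\sum x_j\}$ in $\CH_0(E)_\bQ$; applying this on both the $E$- and $(E')$-components brings $\sum \{d_j\}$ in $\CH_0(A)$ to the canonical form $i \cdot \{(o, o')\} + \{(\bar x, o')\} + \{(o, \bar y)\}$ depending only on the image of $\sum d_j$ in $E \times E'$, and the corresponding reduction in $K_{(n)}$ is realised coordinate by coordinate through the rationally connected subvarieties of $K_{(n)}$ obtained by varying the relevant entries along $E \times \{o'\}$ or $\{o\} \times E'$.

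The main obstacle is the product case: the fiber of $\sigma_{i+2}$ over a point of $A$ is a disjoint union of rationally connected pieces indexed by bi-degree, so no single rational curve connects elements of different strata. Bridging is instead done through the elliptic-curve identity above, which exploits crucially that the origin $(o, o')$ of $A$ lies on both components of $C$ --- a property incorporated in the paper's standing assumption that $(o, o')$ is a Weierstrass point.
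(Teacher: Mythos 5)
Your argument is correct and follows essentially the same route as the paper: reduce from $C_k$ to $C$ by pushing the cycle forward along the isogeny $[k]$ and using injectivity of $[k]_*$ on zero-cycles, then exploit that the fibres of the sum map $C^{(i+2)}\to A$ are complete linear systems, i.e.\ projective spaces, which are $\CH_0$-trivial, so that their images in $K_{(n)}$ (translate by $a$ and add the fixed residual points) are constant cycle subvarieties. The only differences are in two subsidiary points. For injectivity of $[k]_*$ you use the Beauville eigenspace decomposition ($[k]_*$ acts by $k^s$ on $\CH_0(\,\cdot\,)_s$), whereas the paper quotes Bloch's theorem that an isogeny induces an isomorphism on $\CH_0$ with rational coefficients and transports it through the symmetric product; both are standard and equivalent in strength. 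More substantively, for the degenerate case $A=E\times E'$ the paper settles $\CH_0$-triviality of the fibres of $\mu_l\colon C^{(l)}\to A$ in one line, by viewing $\mu_l$ as a specialization of a family of $\bP^{l-2}$-fibrations, while you argue directly: within a bidegree stratum the fibre is a product of linear systems $\bP^{r-1}\times\bP^{s-1}$, and distinct strata are bridged by sliding the entries on each component of $C$ (inside the corresponding linear-system fibre, whose image in $K_{(n)}$ is constant cycle) to the canonical configuration concentrated at $(o,o')$; this is more explicit than the paper's specialization claim and correctly uses the standing convention that the origin of $A$ is the intersection point of the two components of $C$.
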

\begin{proof}
%\todo{réécrire la preuve en utilisant l'isogénie $[k] : A \to A$ et le fait que pour $k=1$, les fibres de $C^{(l)} \to A$ sont $\CH_0$-triviales}
The fibers of the sum map $\mu_2 : C^{(2)} \to A$ are $\CH_0$-trivial varieties. When $C$ is smooth, recall that for any $l > 2$, the abelian sum map  $\mu_l : C^{(l)} \to A$ is a $\bP^{l-2}$-fibration. So if $(A,C)$ is any principally polarized abelian surface, the fibers of $\mu_l : C^{(l)} \to A$ are also $\CH_0$-trivial varieties since $\mu_l $ is a specialization of a family of $\bP^{l-2}$-fibrations. 
 
Now let $k \in \bZ_{>0}$. Since an isogeny $B \to B'$ between abelian varieties induces a natural isomorphism $\CH_0(B) \simeq \CH_0(B')$~\cite{Blochab}, and since the push-forward of a zero-cycle in $A^{(l)}$ supported on a fiber $F_k$ of the sum map $C_k^{(l)} \to A$ under the isogeny $[k] : A \to A$ is supported on $[k](F_k)$, which is a fiber of $\mu_l : C^{(l)} \to A$ so constant cycle in $A^{(l)}$, we conclude that $F_k$ is a constant cycle subvariety in $A^{(l)}$. Thus if $l = i+2$, then the image of $F_k$ under the map $A^{(l)} \to K_{(n)}$ sending $z$ to $z +\sum_{j= i+3}^{n+1} \{a_j\}$ is a constant cycle subvariety, which proves the first assertion. Since the push-forward of points in $F_k$ under the incidence correspondence $\CH_0(K_n) \to \CH_0(A)$ has constant rational equivalence class,  the second assertion follows. 

\end{proof}

\begin{pro} \label{pro-Ci}
%\begin{enumerate}[i)]
$V_{i,k}$ is a $C^i$-subvariety of dimension $2n-i$ in $K_{(n)}$.
% such that $V_{i,k} \cap A^{(n+1)}_\red \ne \emptyset$. 
%\item For each $z \in V_{i,k}$, there exiszero-t $a_{i+1},\ldots,a_{n+1} \in A$ such that $z$ is rationally equivalent to $i\cdot \{0\}  + \{a_{i+1}\} + \cdots \{a_{n+1}\}$ in $K_{(n)}$.
%\end{enumerate}
\end{pro}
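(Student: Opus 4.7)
The plan is to verify separately that $\dim V_{i,k} = 2n - i$ and that $\dim O_x \geq i$ for every $x \in V_{i,k}$.

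For the dimension count, I would parametrize $E_{i,k} = C_k^{i+2} \times A^{n-i}$ by coordinates $(c_0, \ldots, c_{i+1}, a_{i+2}, \ldots, a_n, a)$, giving $\dim E_{i,k} = (i+2) + 2(n-i) = 2n - i + 2$. The fiber of $\tau|_{E_{i,k}} \colon E_{i,k} \to A^{n+1}$ over a point $(d_0, \ldots, d_n)$ is $\{a \in A : d_j - a \in C_k \text{ for } 0 \leq j \leq i+1\}$, an intersection of at least two translates of the curve $-C_k$ inside $A$, hence generically finite; so $\dim \tau(E_{i,k}) = 2n - i + 2$. Next, the composition $\mu_{n+1} \circ \tau|_{E_{i,k}}$ sends our tuple to $\sum c_j + \sum a_j + (n+1)a$, which surjects onto $A$ because $[n+1] \colon A \to A$ does; consequently the induced sum map $q_{n+1}(\tau(E_{i,k})) \to A$ is surjective, and therefore $V_{i,k} = q_{n+1}(\tau(E_{i,k})) \cap K_{(n)}$ has codimension $2$ in $q_{n+1}(\tau(E_{i,k}))$, giving the claimed dimension $2n - i$.

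For the $C^i$-property, I would fix an arbitrary $x \in V_{i,k}$ together with a representative $(\underline{c}, \underline{a}, a) \in E_{i,k}$ satisfying the zero-sum condition. The idea is to build an $i$-dimensional constant cycle subvariety through $x$ by varying the $c_j$'s along a fiber of the restricted sum map $\mu_{i+2} \colon C_k^{i+2} \to A$. Let $F = \mu_{i+2}^{-1}(\mu_{i+2}(\underline{c}))$; since every irreducible component of $C_k^{i+2}$ has dimension $i+2$ and maps into the $2$-dimensional $A$, the fiber-dimension theorem forces $\dim F \geq i$. By Lemma~\ref{lem-critsum}, for every $\underline{c}' \in F$ the class of $q_{n+1}(\tau(\underline{c}', \underline{a}, a))$ in $\CH_0(K_{(n)})$ equals that of $x$. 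The resulting morphism $F \to K_{(n)}$ thus lands in $O_x$, and since its fibers are finite (contained in $\fS_{i+2}$-orbits for generic $\underline{a}$, and finite in any case because $\fS_{n+1}$ is), its image has dimension at least $i$. Hence $\dim O_x \geq i$.

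The main subtlety is not really conceptual but bookkeeping: $C_k$ can be reducible (already for $A = E \times E'$, and typically when $k > 1$), so $V_{i,k}$ itself need not be irreducible, and the statement should be read component-wise, with the argument above showing every top-dimensional component to be a $C^i$-subvariety of dimension $2n-i$. The essential input is Lemma~\ref{lem-critsum}, which converts the freedom of moving the $c_j$'s along a fiber of $\mu_{i+2}$ into rational equivalence in $K_{(n)}$; the fact that such a fiber has dimension at least $i$ \emph{pointwise}, and not merely generically, is what guarantees that the $C^i$-property holds at every point of $V_{i,k}$.
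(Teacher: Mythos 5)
Your verification of the pointwise condition $\dim O_x \ge i$ is essentially the paper's own argument: through any $x \in V_{i,k}$ you move the $c_j$'s along a fiber of $\mu_{i+2} \colon C_k^{i+2} \to A$, that fiber has dimension at least $i$ at every point, and Lemma~\ref{lem-critsum} converts it into an $i$-dimensional constant cycle subvariety through $x$ (the paper packages this as the covering of $V_{i,k}$ by the families $F_b$). This half is correct.

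The gap is in the dimension count. From the surjectivity of the sum map $q_{n+1}(\tau(E_{i,k})) \to A$ you conclude that its fiber over $0$, namely $V_{i,k}$, has codimension $2$. Surjectivity only controls the \emph{generic} fiber; for the special fiber over $0$ it yields the lower bound $\dim V_{i,k} \ge 2n-i$ but not the upper bound $\dim V_{i,k} \le 2n-i$, and the upper bound is precisely what the definition of a $C^i$-subvariety requires. For $i<n$ the repair is easy: the free factors $A^{n-i}$ (equivalently, the invariance of $\tau(E_{i,k})$ under diagonal translation together with surjectivity of $[n+1]$) show that all fibers of the sum map are isomorphic, so the fiber over $0$ has the expected dimension. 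For $i=n$, however, $E_{n,k}=C_k^{n+2}$ has no free $A$-factor and the translation parameter is confined to $C_k$, so this homogeneity argument is unavailable, and the issue is not vacuous: when $A = E \times E'$ the components of $C_k^{n+2}$ made up entirely of horizontal (or vertical) curves have zero-sum locus of dimension $n+1$, and one must use that $\tau$ contracts a one-dimensional direction on them to see that their image in $K_{(n)}$ is still only $n$-dimensional. The paper sidesteps all of this by extracting the upper bound from the constant-cycle structure itself: once $V_{i,k}$ is covered by $i$-dimensional constant cycle subvarieties, \cite[Theorem 1.3]{VoisinHKcoisotp} (a Mumford--Roitman consequence) gives $\dim V_{i,k} \le 2n-i$ uniformly in $i$, and for $i=n$ it shows directly that $V_{n,k}$ is a Lagrangian constant cycle subvariety. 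So either invoke that theorem at the end of your argument, or carry out the component-by-component fiber analysis sketched above (which you rightly anticipate is needed, since $C_k$ is reducible); as written, the step ``surjective, therefore codimension $2$'' does not follow.
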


\begin{proof}

Fix $k  \in \bZ_{>0}$. %The non-emptiness of $V_{i,k} \cap A^{(n+1)}_\red$ is obvious.
Since the sum map $\mu_2 : C^{(2)} \to A$ is birational, it is easy to see that  $\tau_{|E_{i,k}}$ is generically finite. So $\dim V_{i,k}  \ge  2n-i$.

%Similarly when $i = n$, $\dim V_{n,k} \ge n$. 
%That $\dim V_i = 2n-i$ follows from the surjectivity of the Abel-Jacobi map $C^{[2]} \to A$. 

 %Hence the fiber of the sum map $C^{(g)}_{p/q}\to A$ over $s \in A$ is
%$$\bigcup_{z \in A[p]} [p]\(F_z\),  \text{ where } \  \ \ F_z \colonec \left\{ \sum_j \{c_j\} \in C^{(g)}_{1/q} \  \big{|} \  \sum_j c_j = s + z \right\} \simeq \bP^{g-2}.$$ 
%As $[p]$ is a finite morphism, we see that the fibers of $C^{(g)}_{p/q}\to A$ are a union of $\CH_0$-trivial varieties of dimension $g-2$.

%It suffices to show that $F_0 \cap F_z = \emptyset$ for all $z\in A[p]$.

%\begin{lem}
%The fibers of $C^{(g)}_{p/q}\to A$ are $\CH_0$-trivial varieties. 

%\end{lem}

%\begin{proof} 
%\end{proof}

  When $i < n$, note that $V_{i,k}$ is covered by subvarieties
  $$F_b \colonec \left\{ \sum_{j=0}^{i+1}\{c_j+a\}  +  \sum_{j=i+2}^{n}\{a_{j}+a\} \in K_{(n)}\  \bigg{|} \  c_0,\ldots,c_{i+1} \in C,\ a \in A,\ \sum_{j=0}^{i+1}\{c_j\}=b, \right\}$$
  for all $b \in A$, which are constant cycle subvarieties of dimension $i$ by Lemma~\ref{lem-critsum}. We conclude by~\cite[Theorem $1.3$]{VoisinHKcoisotp} that $\dim V_{i,k} = 2n-i$, so $V_{i,k}$ is a $C^i$-subvariety.
  %define 
%\begin{equation}\label{eq-orbitmap}
%\begin{split}
%\tau_i :  C_k^{(i+2)} \times \(A^{(n-i-1)}\times A\) & \to A^{(n+1)} \\
 %\(\sum_{j=0}^{i+1}\{c_j\} , \sum_{j=i+2}^{n}\{a_{j}\} , a \) & \mapsto    \sum_{j=0}^{i+1}\{c_j+a\}  +  \sum_{j=i+2}^{n}\{a_{j}+a\}.
%\end{split}
%\end{equation}
%Then we have $q_{n+1}\(\tau(E_{i,k})\) = \Ima(\tau_i)$. Let 
%$$\pi_1 : C_k^{(i+2)} \times \(A^{(n-i-1)}\times A\) \to C_k^{(i+2)} \ \text{ and } \ \pi_2 : C_k^{(i+2)} \times \(A^{(n-i-1)}\times A\) \to   A^{(n-i-1)}\times A$$ be the first and the second projection.
%Since the image of the restriction of $\tau_i$ to a fiber of  $\(\mu \circ \pi_1, \pi_2\) : C_k^{(i+2)} \times A^{(n-i-1)} \times A \to A \times  A^{(n-i-1)} \times A$ is just $\CH_0$-trivial, and  is contained in some fiber of $\mu : A^{(n+1)} \to A$, we see that $V_{i,k}$ is covered by $\CH_0$-trivial varieties.  Hence  $V_{i,k}$ is a $C^i$-subvariety of dimension $2n-i$. 

%To see $ii)$, let $\ul{c} = \{c_0\} + \cdots + \{c_{i+1}\} \in C^{(i+2)}_{1/q}$, $\ul{a} = \{a_{i+2} \} + \cdots + \{a_{n}\}$, and $a \in A$. By the Jacobi inversion theorem, there exist $c,c' \in C_{1/q}$ such that $\sum_j c_j = c + c'$. From the description above of the fibers of $C^{(g)}_{p/q}\to A$, we see that $\{\tau_a\}_* \([p]_*\ul{c} + \ul{a}\)$ and   $\{\tau_a\}_* \([p]_* \(i\cdot \{0\}  + \{c\} + \{c'\}\) + \ul{a}\) \in K_{(n)}$ lie in the same image of $\bP^i$, which proves $ii)$.

%the restriction of $\tau_i$ to any fiber of the standard projection $ C_k^{(i+2)} \times A^{(n-i-1)}\times A \to A^{(n-i-1)}\times A$ is a finite morphism

In the case $i = n$, let $z = \sum_{j = 0}^n \{c_j + c\} \in V_{n,k}$ where $c,c_0,\ldots,c_n \in C_k$. Since
$$\sum_{j=0}^n c_j =  (n+1)\cdot (-c),$$
$z$ is rationally equivalent to $ (n+1) \cdot \{0\}$ in $K_{(n)}$ by Lemma~\ref{lem-critsum}. Hence $V_{n,k}$ is a constant cycle subvariety of dimension $n$.

\end{proof}

We terminate this section by the following result which is a direct consequence of Lemma~\ref{lem-critsum}. This gives simple representatives of  classes of points supported on $V_{i,k}$ modulo rational equivalence in $K_{(n)}$. 

\begin{lem}~\label{lem-rep}
If $i<n$, every $z \in V_{i,k}$ is rationally equivalent in $K_{(n)}$ to
%\begin{enumerate}[i)]
%\item $i\cdot \{a\} + \sum_{j= i+1}^{n+1} \{a_j\}$ for some $a,a_{i+1},\ldots,a_{n+1} \in A$ such that $i \ a + \sum_{j= i+1}^{n+1} a_j = 0$
%\item $(i+1) \cdot \{a+c\} + \sum_{j= i+2}^{n+1} \{a_j\}$ for some $c \in C_k$ and $a,a_{i+2},\ldots,a_{n+1} \in A$
$$i\cdot \{a\} + \{a + c\} + \{a+ c'\} + \sum_{j= i+3}^{n+1} \{a_j\}$$ 
for some $a,a_{i+3},\ldots,a_{n+1} \in A$ and $c,c' \in C_k$ such that $(i+2) \ a + c + c' + \sum_{j= i+3}^{n+1} a_j = 0$.
%\end{enumerate}
\end{lem}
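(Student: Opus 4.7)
The plan is to take a generic point of $V_{i,k}$, written via its defining parametrization $\tau(E_{i,k})$, and apply Lemma~\ref{lem-critsum} to collapse the $i+2$ ``curve coordinates'' lying in $C_k$ to the standard tuple $(0,\ldots,0,c,c')$.

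By construction $V_{i,k} = q_{n+1}(\tau(E_{i,k})) \cap K_{(n)}$ with $E_{i,k} = C_k^{i+2} \times A^{n-i}$, so every $z \in V_{i,k}$ admits a representative
$$z = \sum_{j=0}^{i+1}\{c_j+a\} + \sum_{j=i+2}^{n}\{a_j+a\},$$
where $c_0,\ldots,c_{i+1} \in C_k$, $a_{i+2},\ldots,a_n,a \in A$, and $\sum_{j=0}^{i+1} c_j + \sum_{j=i+2}^n a_j + (n+1)a = 0$ (the last condition records $z \in K_{(n)}$). Set $s := \sum_{j=0}^{i+1} c_j \in A$.

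The only substantive step is to find $c, c' \in C_k$ with $c + c' = s$: granting this, the $(i+2)$-tuple $(0,\ldots,0,c,c')$ has entries in $C_k$ (since $0 \in C \subset C_k$ by the choice of origin) and the same sum as $(c_0,\ldots,c_{i+1})$. By Lemma~\ref{lem-critsum}, $z$ is then rationally equivalent in $K_{(n)}$ to
$$i\,\{a\} + \{a+c\} + \{a+c'\} + \sum_{j=i+2}^{n}\{a_j+a\}.$$
Relabeling $a_j' := a_{j-1} + a$ for $j = i+3,\ldots,n+1$ puts this in the exact form asked for by the statement, and the constraint $(i+2)a + c + c' + \sum_{j=i+3}^{n+1} a_j' = 0$ follows from a direct rewrite of the original zero-sum condition using $c + c' = s$.

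The existence of $c,c' \in C \subset C_k$ with $c + c' = s$ for arbitrary $s \in A$ amounts to the surjectivity of $\mu_2 : C^{(2)} \to A$, which holds in both cases of the principal polarization on $(A,C)$: this map is birational in the Jacobian case, and in the split case $C = E \times \{o'\} \cup \{o\} \times E'$ it is surjective via the mixed component $(x,o') + (o,y) = (x,y)$. This is really the only potential obstacle, and it is elementary, so the lemma should follow at once from Lemma~\ref{lem-critsum}.
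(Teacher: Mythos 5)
Your argument is essentially the paper's own proof: write $z$ in the standard form coming from $\tau(E_{i,k})$ and invoke Lemma~\ref{lem-critsum} to replace the tuple $(c_0,\ldots,c_{i+1})\in C_k^{i+2}$ by $(0,\ldots,0,c,c')$ with the same sum; the relabeling and the check of the zero-sum constraint are correct.

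One small slip in the justification: the inclusion $C\subset C_k$ is false in the Jacobian case (it would require $[k](C)\subset C$), and both the lemma's conclusion and the hypotheses of Lemma~\ref{lem-critsum} require $c,c'\in C_k$, not merely in $C$. What you actually need is the surjectivity of the sum map $C_k^{(2)}\to A$, which holds just as elementarily: for any $s\in A$ the ample curves $C_k$ and $s-C_k$ meet, so there exist $c,c'\in C_k$ with $c+c'=s$ (alternatively, in the split case $C\subset C_k$ does hold and your argument applies verbatim). The fact $0\in C_k$ is fine, since $[k](0)=0\in C$ by the normalization of the origin, so with this repair the proof is complete and coincides with the paper's.
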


\begin{proof}
%For $i =0$, Lemma~\ref{lem-rep} follows from the surjectivity of
%\begin{equation}
%\begin{aligned}
   %\ga_k :  \  C_k^2 \times A & \to A^2 \\
    %(c,c',a) & \mapsto (c+a,c'+a).
 %\end{aligned}
 %\end{equation}
%\end{cases}
Suppose  $z =  \sum_{j= 1}^{i+2} \{a + c_j\} +\sum_{j= i+3}^{n+1} \{a_j\}$ for some $c_1,\ldots,c_{i+2} \in C_k$. The cycle $z$ is rationally equivalent to $i\cdot \{a\} + \{a + c\} + \{a+ c'\} + \sum_{j= i+3}^{n+1} \{a_j\}$ where $c,c' $ are elements in $C_k$ such that $c + c' =\sum_{j= 1}^{i+2} c_j $ by Lemma~\ref{lem-critsum}.
 %, which proves $i)$. For $ii)$, the same argument shows that $z$ is rationally equivalent to $(i+1)\cdot \{a + c\} + \{a + c'\}  + \sum_{j= i+3}^{n+1} \{a_j\}$ where $c,c' \in C_k$ such that $(i+1) c + c' = \sum_{j= 1}^{i+2} c_j $.

\end{proof}

\section{The support of $S_i\CH_0(X)$}\label{sec-supp}

The subvarieties $V_{i,k}$ that we constructed in the previous section have the following property, whose proof will occupy the whole Section~\ref{sec-supp}. Recall that $\nu : K_n \to K_{(n)}$ is the Hilbert-Chow morphism. 

\begin{thm}~\label{thm-meet}
If $Z \subset K_n \subset A^{[n+1]}$ is a subvariety of dimension $i$ such that the zero-cycles in $A$ parameterized by $Z$ are rationally equivalent in $A$ to each other, then for some $k \in \bZ_{>0}$, there exist $x \in \nu^{-1}(V_{i,k})$ and $z \in Z$ such that $x$ and $z$ are  rationally equivalent in $K_n$.
\end{thm}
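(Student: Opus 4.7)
The strategy is to produce, for some $z \in Z$, a point $x \in \nu^{-1}(V_{i,k})$ with $k$ chosen sufficiently divisible, together with a rational equivalence between $z$ and $x$ in $K_n$. First I would reduce the statement to an assertion in $K_{(n)}$: by Remark~\ref{rem-quotsing}, $\nu_* : \CH_0(K_n) \eto \CH_0(K_{(n)})$ is an isomorphism, so it is enough to find $z \in \nu(Z) \subset K_{(n)}$ and $x \in V_{i,k}$ that are rationally equivalent in $K_{(n)}$. Lifting along the quotient map $q_{n+1} : A^{n+1} \to A^{(n+1)}$, one replaces $\nu(Z)$ by an irreducible component $\tilde Z \subset A_0^{n+1}$, an $i$-dimensional subvariety whose points are ordered tuples $(p_0, \ldots, p_n)$ with $\sum p_j = 0$; the hypothesis on $Z$ then reads that the class $\sum_j \{p_j\}$ in $\CH_0(A)$ is constant along $\tilde Z$.

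Next I would reformulate the conclusion via the representatives of $V_{i,k}$ supplied by Lemma~\ref{lem-rep}. For $i < n$, a point of $V_{i,k}$ is rationally equivalent in $K_{(n)}$ to a cycle of the shape $i \cdot \{a\} + \{a + c\} + \{a + c'\} + \sum_{j = i+3}^{n+1} \{a_j\}$ with $c, c' \in C_k$ and $(i+2) a + c + c' + \sum_{j \ge i+3} a_j = 0$, so the task is to produce $z \in \nu(Z)$ admitting such a representative modulo rational equivalence in $K_{(n)}$. For $i = n$, Proposition~\ref{pro-Ci} identifies $V_{n,k}$ with the single class $(n+1) \cdot \{0\}$, so the task there is to show that some $z \in Z$ is rationally equivalent to $(n+1) \cdot \{0\}$ in $K_{(n)}$.

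The geometric engine for hitting $V_{i,k}$ is the observation, recalled in the proof of Lemma~\ref{lem-critsum}, that the abelian sum map $\mu_l : C^{(l)} \to A$ is a $\bP^{l-2}$-fibration for $l \ge 2$, so its fibers (and those of $C_k^{(l)} \to A$, via the isogeny $[k]$) are constant cycle subvarieties of $A^{(l)}$. Applied to $l = i + 2$ and combined with Lemma~\ref{lem-critsum}, this furnishes $i$-dimensional constant-cycle moves in $K_{(n)}$ that alter the first $i + 2$ entries of a cycle while preserving its class. To connect these moves with $\tilde Z$, I would exploit the standard fact that $C - C = A$ for the theta divisor on a principally polarized abelian surface: each coordinate $p_j(\tilde z)$ may be individually expanded as $p_j = c_j + c'_j$ with $c_j, c'_j$ on $C$, and after an \'{e}tale cover of $\tilde Z$ making such splittings algebraic, together with an enlargement of $k$ to absorb the resulting torsion ambiguity, one hopes to arrange $i + 2$ of the coordinates to lie on a common translate $C_k + a$, giving a representative of $z$ on $V_{i,k}$.

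The main obstacle is bridging the gap between rational equivalence of the zero-cycles $\alpha(z) = \sum \{p_j(z)\}$ on $A$ and rational equivalence in $K_{(n)}$ itself, since the hypothesis only provides the former, which is strictly weaker. Closing this gap should require a spreading-out argument, producing an algebraic family of curves on $A$ together with rational functions on them that simultaneously realize the equivalences $\alpha(z) \sim \alpha(z_0)$ for a fixed reference $z_0 \in Z$, and then lifting this family to rational equivalences inside the symmetric products $A^{(l)}$ before pushing it down to $K_{(n)}$. Combined with the constant-cycle moves above and a dimension count ($Z$ has dimension $i$ whereas $V_{i,k}$ has codimension $i$ in $K_{(n)}$), this should yield the required pair $(z,x)$. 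I would expect the final assembly to proceed by induction on $i$, absorbing one additional coordinate of $\tilde z$ into a $C_k$-translate at each step.
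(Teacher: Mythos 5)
Your outline assembles several correct ingredients from the setup (reduction to $K_{(n)}$ via $\nu_*$, lifting to $A_0^{n+1}$, the $\bP^{l-2}$-fibration $C^{(l)}\to A$ behind Lemma~\ref{lem-critsum}, the representatives of Lemma~\ref{lem-rep}), but the two steps that carry all the difficulty are left as hopes, and one of them is provably not available. Your plan to ``arrange $i+2$ of the coordinates to lie on a common translate $C_k+a$'' by splitting coordinates with $C-C=A$, enlarging $k$, and invoking the dimension count $\dim Z+\dim V_{i,k}=\dim K_n$ is exactly the na\"ive positivity expectation that the paper refutes in Remark~\ref{rm-nonpos}: for $i=1$, $n=2$ and $Z'=\{(c,c+a,c+a')\mid c\in C_k\}$ with generic $a,a'$, the translated variety never meets $C_k^3$, so no amount of torsion absorption or \'etale covering forces the required incidence. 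The actual proof has to split into the dichotomy of hypothesis (H) (some coordinate projection of $Z'$ is onto $A$) versus $Z'\subset\prod_j D_j$, and in the degenerate case it does not prove the incidence directly but replaces $Z'$, using the constant-cycle moves of Lemma~\ref{lem-critsum}, by a new variety $Z''_1$ of the same dimension and the same class in $\CH_0(A)$ which does satisfy (H) (Lemma~\ref{lem-i<n}), with a separate explicit treatment of $i=n$ (Lemma~\ref{lem-i=n}). None of this case analysis appears in your sketch.

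The second gap is how the hypothesis ``constant class in $\CH_0(A)$'' is converted into rational equivalence in $K_{(n)}$. Your proposal defers this to a ``spreading-out'' of families of curves and rational functions realizing the equivalences, which is not carried out and is not how one can proceed; the paper instead uses the hypothesis in two precise ways: (i) by Mumford--Roitman, the pulled-back holomorphic $1$- and $2$-forms vanish on $Z'$, which yields generic finiteness of the auxiliary map $\hat{p}_{|Z'}$ (Lemma~\ref{lem-genfin}) and hence, by density of the torsion translates $\Delta^{\#}_{C_k}$, that $Z_{C_k}=\hat{p}\bigl(p^{-1}(\Delta^{\#}_{C_k})\cap Z'\bigr)$ has dimension $i-1$; and (ii) by Bloch's identity on abelian surfaces (Lemmas~\ref{lem-genfin1} and~\ref{lem-RErec}), $Z_{C_k}$ again parameterizes zero-cycles of constant class, so one can run an induction on $n$ (reducing to $K_{(n-1)}$), not the induction on $i$ you anticipate, and then transport the conclusion back with Lemma~\ref{lem-critsum}. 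Finally, you also skip the reduction to the reduced multiplicity-stratum (Lemma~\ref{lem-maxstrat}, using Brian\c{c}on's fiber dimensions), without which $\dim\nu(Z)$ could drop below $i$ and the whole dimension bookkeeping would fail. As it stands the proposal is a program whose unproven steps are precisely the theorem's content.
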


\begin{rem}
Naïvely, since 
$$\dim Z + \dim V_{i,k} = \dim K_n,$$ 
the subvarieties $Z$ and $\nu^{-1}(V_{i,k})$ are expected to have nonempty intersection, which would imply Theorem~\ref{thm-meet}. Part of the argument in the proof  establishes directly this nonemptiness in some situations (\emph{cf}. Subsection~\ref{ssec-nonsurj}). See also Remark~\ref{rm-nonpos} below.
\end{rem}

\begin{proof}[Proof of Theorem~\ref{thm-meet}]
The structure of the proof is inspired by Voisin's proof of~\cite[Theorem $2.1$]{Voisin0cycleK3}. Up to taking an irreducible component of $Z$, we suppose that $Z$ is \emph{irreducible}. The case $i = 0$ is trivial; below we will assume $i > 0$.
\ssec{ Reduction to the open multiplicity-stratum}
\begin{lem}~\label{lem-maxstrat}
It suffices to treat the case where a general element $z$ in $Z$ lies in the open multiplicity-stratum $A^{[n+1]}_{\red}$ parameterizing reduced subschemes of $A$.
\end{lem}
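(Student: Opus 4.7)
The plan is to reduce from $Z$ generically in a non-reduced stratum $\Sigma_{(m_1,\ldots,m_r)}$ (with $\sum m_j = n+1$ and some $m_j \ge 2$) to the reduced case, via a spreading construction based on the constant-cycle curves $C_k$ from Section~\ref{sec-ccs}. Using Remark~\ref{rem-quotsing}, I may work throughout with $\nu(Z) \subset K_{(n)}$ instead of $Z$, since both the parameterized zero-cycles in $A$ and all rational equivalence classes of points are preserved under $\nu_*$. Assume also that $\nu|_Z$ is generically finite onto its image (the degenerate case $\dim \nu(Z) < i$ is treated separately at the end); then $\nu(Z)$ admits a canonical lift
$$
\widetilde Z_0 \subset \bigl\{(a_1,\ldots,a_r) \in A^r : \textstyle\sum_j m_j a_j = 0\bigr\},
$$
an irreducible subvariety of dimension $i$.

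For each $k \in \bZ_{>0}$, I then introduce the spreading
$$
\widetilde Z_k \colonec \left\{ \sum_{j=1}^{r}\sum_{\ell=1}^{m_j} \{a_j + c_{j,\ell}\} \in K_{(n)} \ :\ (a_1,\ldots,a_r) \in \widetilde Z_0,\ c_{j,\ell} \in C_k,\ \sum_{\ell=1}^{m_j} c_{j,\ell} = 0\ \forall j \right\}.
$$
Iterating Lemma~\ref{lem-critsum} on each cluster of size $m_j$ (whose proof in fact covers arbitrary cluster size $\ge 2$, via the $\CH_0$-triviality of the fibers of $\mu_{m_j}\colon C_k^{(m_j)} \to A$ deduced from the specialization of a family of projective-space fibrations), every element of $\widetilde Z_k$ is rationally equivalent in $K_{(n)}$ to the collapsed cycle $\sum_j m_j\{a_j\} = \nu(z)$ for the corresponding $z \in Z$, and carries the same class in $\CH_0(A)$. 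Fixing the $c_{j,\ell}$ at generic values with each cluster-sum vanishing, I obtain an irreducible $i$-dimensional slice $\widetilde Z_k^{\mathrm{sl}} \subset \widetilde Z_k$; for generic choice the $n+1$ points $a_j + c_{j,\ell}$ are pairwise distinct, so $\widetilde Z_k^{\mathrm{sl}}$ lies in the reduced open stratum of $K_{(n)}$, canonically lifts to a subvariety of $K_n$ of dimension $i$ generically in $A^{[n+1]}_{\red}$, and inherits the constant $\CH_0(A)$-class of $Z$. Applying the theorem in the reduced case to the slice yields $x \in \nu^{-1}(V_{i,k'})$ for some $k'$, rationally equivalent in $K_n$ to some $\tilde z \in \widetilde Z_k^{\mathrm{sl}}$. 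Since $\tilde z$ is by construction rationally equivalent in $K_{(n)}$, hence in $K_n$, to some $z \in Z$, we conclude $x \sim z$ in $K_n$, proving the theorem for $Z$.

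The main obstacle is to verify that $\widetilde Z_k^{\mathrm{sl}}$ is a well-defined irreducible subvariety of the expected dimension $i$, lying in the reduced open stratum: this requires a careful dimension count on the map $\widetilde Z_0 \to K_{(n)}$ (with the $c_{j,\ell}$ fixed) combined with a genericity argument to avoid the diagonal loci $\{a_j + c_{j,\ell} = a_{j'} + c_{j',\ell'}\}$ in $A^{n+1}$. A secondary obstacle is the degenerate case where $\nu|_Z$ is not generically finite: then $Z$ is contained in a single fiber of $\nu$, hence is a constant-cycle subvariety of $K_n$ by the $\nu_*$-isomorphism, and the conclusion must be extracted directly from the structure of the single cycle $\nu(Z) \in K_{(n)}$ and its relation with the varieties $V_{i,k}$.
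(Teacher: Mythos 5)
Your main construction is essentially the paper's: spread each cluster of multiplicity $m_j$ into $m_j$ points of the form $a_j+c_{j,\ell}$ with $c_{j,\ell}\in C_k$ of fixed cluster-sum, use Lemma~\ref{lem-critsum} to see that the spread cycle has the same class in $\CH_0(K_{(n)})$ and in $\CH_0(A)$ as the original one, and then invoke the reduced case. In the regime where $\nu|_Z$ is generically finite this works; the genericity/finiteness issues you flag (distinctness of the points $a_j+c_{j,\ell}$, finiteness of the fibers of the slice map, so that $\dim \widetilde Z_k^{\mathrm{sl}}=i$) are real but routine, and your normalization $\sum_\ell c_{j,\ell}=0$ is legitimate since the origin lies on $C$, hence on every $C_k$.

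The genuine gap is your treatment of the case where $\nu|_Z$ is \emph{not} generically finite. It is false that $Z$ must then lie in a single fiber of $\nu$: one can have $0<\dim\nu(Z)<i$, with $Z$ fibered over $\nu(Z)$ by positive-dimensional subvarieties of the punctual fibers (and even the single-fiber case is not exotic, e.g.\ $Z\subset\nu^{-1}\((n+1)\{0\}\)$, a fiber of dimension $n$). In all these cases your fixed-$c$ slice only has dimension $\dim\nu(Z)<i$, so the reduced case yields a point of some $\nu^{-1}(V_{i',k})$ with $i'<i$, which is not the statement to be proved; and for the single-fiber case you give no argument at all, only the assertion that the conclusion ``must be extracted directly.'' This is precisely the point where the paper's proof differs from yours: in the definition of $Z_1$ the curve points $c_{j,p,q}$ are allowed to \emph{vary} (only their cluster-sums $j\cdot c_{j,p}$ being tied to a decomposition of the support points), so that $\dim Z_1=\dim\nu(Z)+\sum_j\ga_j(j-1)$, and by Brian\c{c}on's theorem on the fibers of the Hilbert--Chow morphism this quantity is automatically $\ge \dim Z=i$, whatever the fiber dimension of $\nu|_Z$. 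One then applies the reduced case to ($i$-dimensional subvarieties of) the strict transform $\wt{Z_1}$, with no case distinction. To repair your argument you would have to keep this extra freedom in the $c_{j,\ell}$'s (i.e.\ not cut down to a fixed slice before comparing dimensions), or supply a separate, genuine argument covering every situation where $\dim\nu(Z)<i$.
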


\begin{proof}

Assume the conclusion of Theorem~\ref{thm-meet} for all subvarieties in $K_n$ parameterizing zero-cycles in $A$ of constant class modulo rational equivalence and satisfying the condition in Lemma~\ref{lem-maxstrat}. Let $Z$ be a subvariety of $K_n$ as in the theorem. Suppose that a general element $z$ in $Z$ lies in the multiplicity-stratum $A^{[n+1]}_\mu$ for some partition 
$$\mu = 1^{\ga_1}\cdots (n+1)^{\ga_{n+1}}$$
 of $n+1$. Consider
 
 %We will prove Theorem~\ref{thm-meet} by decreasing induction on $(\ga_1,\ldots,\ga_{n+1})$ with the lexicographic order\footnote{Precisely, $(\ga_1,\ldots,\ga_{n+1}) < (\gb_1,\ldots,\gb_{n+1})$ if and only if there exists $1 \le i \le n+1$ such that $\ga_j = \gb_j$ for all $j < i$ and $\ga_i < \gb_i$.} starting with $(n+1,0,\ldots,0)$, for which the conclusion of Theorem~\ref{thm-meet} holds by assumption. 

%Suppose that $\ga_j > 0$ for some $j >1$. 

%If $j = n+1$, then $(\ga_1,\ldots,\ga_{n+1}) = (0,\ldots,0,1)$. In this case an element $z \in \nu(Z)$ represents a zero-cycle in $A$ of the form $(n+1)\{a\}$ where $a \in A[n+1]$. So  $z \in V_{n,n+1}$.

 %If $j \ne n+1$, then there exists $l \ne j$ such that  $\ga_l > 0$. 
 
 \begin{equation*}
Z_1 \colonec\left\{ 
   \sum_{j=1}^{n+1} \sum_{1 \le p \le \ga_j} \sum_{q=1}^j\{c_{j,p,q}+a_{j,p}\}  \in K_{(n)} \  \Bigg{|} 
  % \begin{split}
        \  a_{j,p} \in A, \ c_{j,p},c_{j,p,q}\in C, \ \sum_{q=1}^j c_{j,p,q} = j\cdot c_{j,p}, 
        \ \sum_{j=1}^{n+1} \sum_{1 \le p \le \ga_j} j \{c_{j,p} + a_{j,p}\} \in \nu(Z)  
 %\end{split}
 \right\},
  \end{equation*}
  where we recall that the sum of elements within (resp. without) curly brackets is the sum of zero-cycles (resp. defined by the group law in $A$). By Lemma~\ref{lem-critsum}, $Z_1$ parameterizes the same class of zero-cycles in $A$ as $Z$ parameterizes. On one hand, it is easy to see that 
  $$\dim Z_1 = \dim \nu(Z) + \sum_{j=1}^{n+1}\ga_j ( j - 1).$$ 
  On the other hand, %since a general element in $Z_1$ lies in the multiplicity-stratum $A^{[n+1]}_{\mu'}$ with
  %$$\mu' = 1^{\ga_1 + j}2^{\ga_2}\cdots j^0 (j+1)^{\ga_{j+1}} \cdots (n+1)^{\ga_{n+1}} > \mu,$$
 we see by~\cite{Briancon} that if $z$  is a general element in $\nu(Z)$, then 
 $$\dim \nu^{-1}(z) = \sum_{j=1}^{n+1}\ga_j ( j - 1).$$
 So if $\wt{Z_1}$ denotes  the strict transform  of $Z_1$ under $\nu : K_n \to K_{(n)}$, then $\dim \wt{Z_1} = \dim Z = i$ and a general element in $\wt{Z_1}$ lies in  the open multiplicity-stratum $A^{[n+1]}_{\red}$. By assumption, there exist $x \in \nu^{-1}(V_{i,k})$ and $z \in \wt{Z_1}$ such that $x$ and $z$ are  rationally equivalent in $K_n$. Finally by Lemma~\ref{lem-critsum} and the definition of $Z_1$, there exists $z' \in Z$ which is rationally equivalent to $z$ in $K_n$, hence to $x$.

%Let $Z^\circ \colonec \nu(Z) \cap A^{(n+1)}_\mu$ where $\nu :A^{[n+1]} \to A^{(n+1)}$ is the Hilbert-Chow morphism; an element of $Z^\circ$ will be denoted by $z = i\{z_1\} + \cdots + i\{z_{\ga_i}\}  + z'$ where $z'$ is a zero-cycle supported on $A \bss 
%\{z_1,\ldots,z_{\ga_i}\}$ with coefficients $\ne i$.  
%Define
%\begin{equation}\label{eq-orbitmap}
%\begin{split}
%\psi : Z^\circ \times C  & \to A^{(n+1)} \\
 %\( i\{z_1\} + \cdots + i\{z_{\ga_i}\}  + z', c \) & \mapsto    \sum_{ 1 \le k \le \ga_i} \(\{z_k + c\} + \{z_k - c\} + \(i-2\)\{z_k \}\) + z'.
%\end{split}
%\end{equation}
%so that $Z' \colonec \ol{\nu^{-1}(\Ima \psi)}$ parameterizes zero-cycles in $A$ of the same class as the one parameterized by $Z$. Since $z_1,\ldots,z_{\ga_i}$ are all different and $z_j \notin \supp(z')$ for all $j$, a general element in $Z'$ lies in the multiplicity-stratum $A^{[n+1]}_{\mu'}$ where 
%$$\mu' = 1^{\ga_1 +2\ga_i}2^{\ga_2}\cdots(i-2)^{\ga_{i-2} + \ga_i}(i-1)^{\ga_{i-1}}i^0(i+1)^{\ga_{i+1}}\cdots(n+1)^{\ga_{n+1}}  > \mu.$$ 
% Hence by induction hypothesis, since $\dim Z' \ge i$, there exists a point $z' \in Z'$ such that $z'$ is rationally equivalent in $K_n$ to some point in $V_{i,k}$ for some $k$. Finally since $E \simeq \bP^1$, $\nu(z')$ is rationally equivalent \emph{in ${\nu(K_n)}$} to $\nu(z)$ for some point $z$ in $Z$. As the fibers of $\nu$ are rationally connected, we conclude that $z \sim_{\rat} z'$ in $K_n$.
\end{proof}

\ssec{Setups}
\hfill

By virtue of Lemma~\ref{lem-maxstrat}, we may and we will assume that a general element $z$ in $Z$ lies in the open multiplicity-stratum $A^{[n+1]}_{\red}$. In particular, $\dim Z = \dim \nu(Z)$. Since the Hilbert-Chow morphism $\nu : K_n \to K_{(n)}$ induces an isomorphism $\nu_* : \CH_0(K_n) \eto \CH_0(K_{(n)})$, Theorem~\ref{thm-meet}, it suffices to prove the following analogue version of Theorem~\ref{thm-meet} in $K_{(n)}$.

\begin{thmbis}{thm-meet}
If $Z \subset K_{(n)} \subset A^{(n+1)}$ is a subvariety of dimension $i$ such that the zero-cycles in $A$ parameterized by $Z$ are rationally equivalent in $A$ to each other, then for some $k \in \bZ_{>0}$, there exist $x \in V_{i,k}$ and $z \in Z$ such that $x$ and $z$ are  rationally equivalent in $K_{(n)}$.
\end{thmbis}

We will prove Theorem~\ref{thm-meet}'   by induction on $n \ge 1$. For $n=1$, the only case to prove is that of $i=1$. Note that $K_{(2)} \to A / \imath$ associating $(a,-a) \in K_{(2)}$ to the class of $a$ under  the involution action $\imath$ is an isomorphism. \emph{Via} this isomorphism,  $V_{1,1}$ is the image of the theta divisor $C \subset A$ under the quotient map $A \to  A / \imath$, so $V_{1,1}$ is ample. As $\dim Z \ge 1$, $Z \cap V_{1,1}$ is not empty, which proves Theorem~\ref{thm-meet}' in this case. 

From now on we assume $n > 1$. Let $Z'$ be one of the \emph{irreducible} components of $q_{n+1}^{-1}\(Z\)$ where we recall that $q_{n+1} : A^{n+1} \to A^{(n+1)}$ denotes the quotient map. Let (H) denote the assumption
\begin{equation}\label{hyp}
\textit{There exists an integer $j$ such that the image of $Z'$ under the $j$-th projection $A^{n+1} \to A$ is $A$.} 
\tag{H}
\end{equation}

\ssec{Proof of Theorem~\ref{thm-meet}' under  induction hypothesis and assumption (H)} \hfill

In this paragraph, we assume that $Z'$ verifies (H). If we define
\begin{equation}
\begin{split}
p_j :  A^{n+1} & \to A^{2} \\
  \(a_1,\ldots,a_{n + 1} \) & \mapsto   \(a_j,  \sum_{l \ne j}a_l \),
\end{split}
\end{equation}
assumption (H) implies that ${p_j}_{|Z'} : Z'  \to A^2_0$ is surjective; without loss of generality we can assume $j=1$. For simplicity, ${p_{1}}$ will be denoted  by $p$  from now on until the end of the proof.  Define the map
\begin{equation}
\begin{split}
\hat{p} :  A^{n+1} & \to A^{n} \\
 \(a,a_1,\ldots,a_{n} \) & \mapsto   \(a +n\cdot a_1,\ldots,a + n \cdot a_{n} \).
\end{split}
\end{equation}

\begin{lem}\label{lem-genfin}
In the situation above, the map $\hat{p}_{|Z'}$ is generically finite. 
\end{lem}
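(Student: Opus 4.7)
The plan is to establish generic finiteness by showing that the differential $d\hat{p}_z$ restricted to $T_z Z'$ is injective at a generic smooth point $z \in Z'$. Two ingredients combine here: the infinitesimal isotropy coming from Mumford--Roitman applied to the rational equivalence hypothesis on $Z$, and the dominance of the first projection provided by assumption (H). Translation-identifying all tangent spaces of $A^{n+1}$ with $(T_0 A)^{n+1}$, the differential of $\hat{p}$ reads $(\eta, \eta_1, \ldots, \eta_n) \mapsto (\eta + n\eta_1, \ldots, \eta + n\eta_n)$; its kernel inside $T_z A_0^{n+1}$ is the two-dimensional subspace
\begin{equation*}
N := \{ (\eta, -\eta/n, \ldots, -\eta/n) : \eta \in T_0 A \},
\end{equation*}
and the generic finiteness of $\hat{p}|_{Z'}$ amounts to $N \cap T_z Z' = 0$ at a generic smooth $z$.

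Since $Z'$ is an irreducible component of $q_{n+1}^{-1}(Z)$, its points parameterize zero-cycles on $A$ which are rationally equivalent to each other in $\CH_0(A)$. Mumford--Roitman's infinitesimal criterion (\emph{cf.}~\cite[Proposition $10.24$]{VoisinII}) then forces $Z'$ to be isotropic with respect to the translation-invariant closed $2$-form $\omega_{n+1} := \sum_{j=0}^{n} p_j^*\omega$ on $A^{n+1}$, where $\omega$ is a generator of $H^0(A, \gO_A^2)$.

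Suppose for contradiction that a non-zero $\xi = (\eta, -\eta/n, \ldots, -\eta/n) \in N \cap T_z Z'$ exists at a generic smooth $z$. For an arbitrary $\xi' = (\eta', \eta'_1, \ldots, \eta'_n) \in T_z Z'$, the constraint $\eta' + \sum_{j \ge 1}\eta'_j = 0$ (coming from $Z' \subset A_0^{n+1}$) combined with the translation-invariance of $\omega$ yields
\begin{equation*}
\omega_{n+1}(\xi, \xi') = \omega(\eta, \eta') - \frac{1}{n}\,\omega\!\left(\eta, \sum_{j \ge 1}\eta'_j\right) = \frac{n+1}{n}\,\omega(\eta, \eta').
\end{equation*}
Isotropy of $T_z Z'$ forces this to vanish for every $\xi' \in T_z Z'$. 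But by hypothesis (H) the first-coordinate projection $Z' \to A$ is dominant, so its differential is surjective at a generic smooth $z$ and $\eta'$ covers the whole of $T_0 A$ as $\xi'$ ranges over $T_z Z'$. Non-degeneracy of $\omega$ then forces $\eta = 0$, contradicting $\xi \ne 0$.

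The delicate step is the correct invocation of Mumford--Roitman: the hypothesis of Theorem~\ref{thm-meet}$'$ concerns rational equivalence \emph{in $A$} rather than in $K_{(n)}$, and it is precisely this that produces isotropy for the explicit $2$-form $\sum_j p_j^*\omega$ on $A^{n+1}$; any other ambient variety or $2$-form would yield a weaker condition. Once this compatibility is secured, the rest is a short linear-algebra argument at a single tangent space, in which the factor $n$ built into the definition of $\hat{p}$ is exactly what ensures that the pairing $\omega_{n+1}(\xi, \xi')$ depends non-trivially on the first coordinate $\eta'$ of $\xi'$.
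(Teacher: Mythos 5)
Your proof is correct and follows essentially the same route as the paper: both arguments feed the Mumford--Roitman vanishing of $\sum_j \pr_j^*\omega$ on $Z'$ and the dominance of the first projection from assumption (H) into a tangent-space argument using the non-degeneracy of the symplectic form on $T_0A$. The only (cosmetic) difference is that you compute the kernel of $d\hat{p}$ explicitly and pair it against $T_zZ'$ pointwise, using the constraint $\eta'+\sum_j\eta'_j=0$ from $Z'\subset A_0^{n+1}$, whereas the paper packages the same computation as an identity of holomorphic $2$-forms, $\hat{p}^*\gs_{|Z'}$ proportional to $\pr_1^*(\ga\wedge\gb)_{|Z'}$, before concluding.
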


\begin{proof}

First of all, let 
$$\Gamma \colonec \{(a_1,\ldots,a_{n+1},a) \mid a_1,\ldots,a_{n+1} \in A, \ a = a_i \text{ for some } i \ \} \subset A^{n+1} \times A$$ 
denote the incidence correspondence. Since $Z' \subset A^{n+1}$ parameterizes zero-cycles of constant class in $\CH_0(A)$, we see by~\cite[Proposition 10.24]{VoisinII} that for all $\ga,\gb \in H^0(A,\gO_A^1)$, 
\begin{equation}\label{eqn-vanform}
\Gamma^*\ga = \sum_{i = 1}^{n+1}\(\pr_i^*\ga\)_{|Z} = 0 \ \ \ \ \text{ and } \ \ \ \ \Gamma^*(\ga \wedge \gb) = \sum_{i = 1}^{n+1}\(\pr_i^*(\ga \wedge \gb)\)_{|Z} = 0,
\end{equation}
where $\pr_i : A^{n+1} \to A$ is the $i$-th projection.

Next by definition of $\hat{p}$, if $\gs \colonec \sum_{i=1}^{n} \pr_i^*(\ga \wedge \gb) \in H^0(A^n, \gO_{A^n}^2)$, then elementary computations show that
$$\hat{p}^*\gs = \sum_{i = 2}^{n+1} \(\pr_1^*(\ga \wedge \gb) + n^2 \cdot \pr_i^*(\ga \wedge \gb)  + n\cdot \pr_1^*\ga \wedge \pr_i^*\gb - n\cdot \pr_1^*\gb \wedge \pr_i^*\ga\).$$
The above formula together equations~\eqref{eqn-vanform} yield
\begin{equation}\label{eqn-simp}
\hat{p}^*\gs_{|Z'}  = (n - n^2)\cdot \pr_1^*(\ga \wedge \gb)_{|Z'}.
\end{equation}
Here we recall that $n > 1$, so $n- n^2 \ne 0$.

Now choose $\ga,\gb \in H^0(A,\gO_A^1)$ so that $\ga \wedge \gb \in H^0(A,\gO_A^2)$ is non-degenerated. Recall that ${\pr_1}_{|Z'}$ is surjective by assumption (H). It follows that if $z \in Z'$ is a smooth general point, the differential ${(\pr_1)}_{|Z'*}$ is surjective at  $z$  and thus the kernel of the two-form ${(\pr_1)}_{|Z'}^*(\ga \wedge \gb)$ is equal to  $\ker({(\pr_1)}_{|Z'})_*$. On the other hand,  formula~\eqref{eqn-simp} shows that if $u \in T_{Z',z}$ is annihilated by $\hat{p}_*$, then $u \in \ker (\pr_1^*(\ga \wedge \gb)_{|Z'})$, since $u \in \ker(\hat{p}^*\gs_{|Z'})$.  Therefore $u \in  \ker (\hat{p}_*) \cap \ker({\pr_1}_*) = \{0\}$, hence $\hat{p}_{|Z'}$ is generically finite.

\end{proof}

Before we continue, let us prove a general formula.

\begin{lem}\label{lem-genfin1}
For $a,a_1,\ldots,a_n \in A$, the following equality holds in $\CH_0(A)$:
\begin{equation}\label{eqn-Blochn}
\sum_{j= 1}^n\{a_j + a\} = \left\{ a + \sum_{j= 1}^na_j \right\} + \sum_{j= 1}^n\{a_j \} - \left\{\sum_{j= 1}^na_j \right\} + (n-1)\(\{a\} - \{0\}\).
\end{equation}
\end{lem}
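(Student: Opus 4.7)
My plan is to induct on $n \ge 1$. The base case $n = 1$ is immediate (both sides collapse to $\{a + a_1\}$), and the crux is the case $n = 2$, which is the classical Bloch identity
\begin{equation*}
\{a_1 + a\} + \{a_2 + a\} + \{a_1 + a_2\} + \{0\} = \{a_1 + a_2 + a\} + \{a_1\} + \{a_2\} + \{a\}
\end{equation*}
in $\CH_0(A)$. To prove this, I would rewrite it as the vanishing of the triple Pontryagin product $\langle a_1 \rangle \ast \langle a_2 \rangle \ast \langle a \rangle$, where $\langle x \rangle \colonec \{x\} - \{0\}$ and $\alpha \ast \beta \colonec \sigma_*(\alpha \times \beta)$ for the sum map $\sigma \colon A \times A \to A$; a direct expansion recovers exactly the eight terms above. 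The vanishing itself follows from Bloch's structural results on zero-cycles of abelian varieties~\cite{Blochab}: each $\langle x \rangle$ has degree zero, hence lies in $\bigoplus_{s \ge 1}\CH_0(A)_s$ in the Beauville decomposition, so the triple product lands in $\bigoplus_{s \ge 3} \CH_0(A)_s$, which vanishes because $\dim A = 2$.

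For the inductive step from $n$ to $n+1$, set $T = \sum_{j=1}^n a_j$ and split off the final term:
\begin{equation*}
\sum_{j=1}^{n+1}\{a_j + a\} = \sum_{j=1}^n\{a_j + a\} + \{a_{n+1} + a\}.
\end{equation*}
Applying the inductive hypothesis to the first sum and comparing with the desired right-hand side for $n+1$, everything cancels except the identity
\begin{equation*}
(\{T + a\} - \{T\}) + (\{a_{n+1} + a\} - \{a_{n+1}\}) = (\{T + a_{n+1} + a\} - \{T + a_{n+1}\}) + (\{a\} - \{0\}),
\end{equation*}
which is precisely a rearrangement of the $n = 2$ identity applied to the pair $(T, a_{n+1})$ and the translation $a$. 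Thus the inductive step is purely formal.

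The only substantial obstacle is the $n = 2$ base case: for generic $b_1, \ldots, b_m \in A$ the classes $\{b_1\}, \ldots, \{b_m\}$ are essentially independent in $\CH_0(A)_\bQ$ modulo rational multiples of $\{0\}$, so no naïve manipulation of the eight terms can succeed. A genuine structural input on the abelian surface is required, and this is supplied by Bloch's vanishing $\CH_0(A)_s = 0$ for $s > \dim A$ in the Beauville decomposition; once it is granted, the entire lemma follows.
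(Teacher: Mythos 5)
Your proposal is correct and follows essentially the same route as the paper: induction on $n$ whose inductive step is exactly the same rearrangement of the three-variable identity, with the base case being Bloch's identity on the abelian surface (which the paper simply cites as~\cite[Theorem $(0.1)$, case $n=2$]{Blochab}). Your derivation of that base case via the Pontryagin product and the vanishing of $\CH_0(A)_s$ for $s>2$ is just a standard reformulation of Bloch's theorem, so the two proofs agree in substance.
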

\begin{proof}

Let us recall for convenience the following formula due to Bloch~\cite[Theorem $(0.1)$, case $n = 2$]{Blochab}. If $a,b,c \in A$ where $A$ is an abelian surface with origine $o$, then the following holds in $\CH_0(A)$: 

\begin{equation}\label{eqn-Bloch}
\{ o\} - \{ a\} - \{ b\} - \{ c\} + \{a+b \} + \{b+c \} + \{c+a \} - \{a +b+c \} = 0.
\end{equation}

We will prove equality~\ref{eqn-Blochn} by induction starting from $n =1$ and $2$. When $n=1$, there is noting to prove. A direct application of Bloch's formula~\eqref{eqn-Bloch} yields the case $n = 2$, from which we deduce the following equality for $n>1$ in $\CH_0(A)$:
$$\{a_n + a\} + \left\{ a + \sum_{j= 1}^{n-1}a_j \right\} = \left\{ a + \sum_{j= 1}^{n}a_j \right\} + \left\{\sum_{j= 1}^{n-1}a_j \right\}+  \{a_n\} - \left\{\sum_{j= 1}^{n}a_j \right\} + \(\{a\} - \{0\}\).$$
Lemma~\ref{lem-genfin1} thus follows easily from induction hypothesis.
\end{proof}

\begin{lem}\label{lem-RErec}
Let $\Delta^{\#}_{C_k} \colonec \{(c,- c) \in A^2 \mid c \in C_k\}$. The subvariety $Z_{C_k} \colonec \hat{p}\(p^{-1}(\Delta^{\#}_{C_k}) \cap Z'\)$ parameterizes effective zero-cycles of degree $n$ in $A$ of constant class modulo rational equivalence. 
\end{lem}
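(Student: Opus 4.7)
The plan is to compute, in closed form modulo rational equivalence, the effective zero-cycle parameterized by a general point $\hat{p}(a,a_1,\ldots,a_n) = (a+na_1,\ldots,a+na_n) \in Z_{C_k}$, and to check that the answer only depends on the constant class $\alpha \colonec \{a\} + \sum_j \{a_j\}$ parameterized by $Z'$. A point of $p^{-1}(\Delta_{C_k}^\#) \cap Z'$ is a tuple $(a,a_1,\ldots,a_n) \in Z'$ with $a \in C_k$, and $Z' \subset A_0^{n+1}$ gives $\sum_j a_j = -a$, so the two inputs I will exploit are (i) Lemma~\ref{lem-genfin1} applied to the points $na_1,\ldots,na_n$ shifted by $a$, and (ii) the arithmetic of points of $C_k$ in $\CH_0(A)$.

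First, applying Lemma~\ref{lem-genfin1} and using $\sum_j a_j = -a$ yields
\[
\sum_{j=1}^n \{a+na_j\} = \{(1-n)a\} - \{-na\} + (n-1)(\{a\}-\{0\}) + [n]_{*}\textstyle\sum_j \{a_j\}
\]
in $\CH_0(A)$. By the constancy of $\alpha$, the last summand equals $[n]_{*}\alpha - \{na\}$, so it suffices to show that $\{(1-n)a\} - \{-na\} - \{na\} + (n-1)\{a\}$ is independent of $a \in C_k$ modulo rational equivalence.

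The heart of the argument is the linearization identity $\{ma\} = m\{a\} - (m-1)\{0\}$ in $\CH_0(A)$, valid for every $a \in C_k$ and $m \in \bZ$. To prove it, I would first invoke Lemma~\ref{lem-critsum} with $\ell = 2$ applied to the pair $(a,-a) \in C_k \times C_k$ (summing to $0$), and compare with the pair $(0,0)$, to deduce $\{a\} + \{-a\} = 2\{0\}$. Decomposing $\{a\}$ along the Beauville grading $\CH_0(A) = \bigoplus_{s=0}^{2} \CH_0(A)_s$ and using that $[-1]_{*}$ acts by $(-1)^s$ on $\CH_0(A)_s$, this identity forces the $\CH_0(A)_2$-component of $\{a\}$ to vanish; combined with $\deg\{a\} = 1$, this gives $\{a\} - \{0\} \in \CH_0(A)_1$, on which $[m]_{*}$ acts by $m$, yielding the claim.

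Substituting the linearization for $\{(1-n)a\}$, $\{-na\}$, and $\{na\}$ into the previous display, all occurrences of $\{a\}$ cancel, the constants combine, and one is left with $\sum_{j=1}^n \{a+na_j\} = [n]_{*}\alpha - \{0\}$, which is manifestly independent of $(a,a_1,\ldots,a_n)$. The main obstacle is isolating and justifying the linearization step: the conclusion $\{a\} - \{0\} \in \CH_0(A)_1$ for $a \in C_k$ depends essentially on the symmetric theta-divisor hypothesis and the $[-1]_{*}$-eigenspace analysis of the Beauville grading; once this is in place, the rest of the proof is routine bookkeeping built on Lemma~\ref{lem-genfin1}.
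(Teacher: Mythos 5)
Your argument is correct, and its skeleton is the one the paper uses: apply Lemma~\ref{lem-genfin1} to the translated points $n\cdot a_j$, use that $[n]_*$ of the class parameterized by $Z'$ is constant, and then dispose of the terms involving the point $a\in C_k$ by a special rational-equivalence property of points on $C_k$. Where you differ is in how that last step is handled. The paper compares two points $(c,\underline{a})$ and $(c',\underline{a}')$ of $p^{-1}(\Delta^{\#}_{C_k})\cap Z'$ and simply asserts, ``since $c\in C_k$'', that the relevant combinations such as $\{c-n\cdot c\}+(n-1)(\{c\}-\{0\})$ and $\{n\cdot c\}+\{-n\cdot c\}$ do not depend on $c$; you instead prove the underlying fact explicitly, namely the linearization $\{ma\}=m\{a\}-(m-1)\{0\}$ for $a\in C_k$, deduced from $\{a\}+\{-a\}=2\{0\}$ (via Lemma~\ref{lem-critsum}, which needs $0\in C_k$ --- guaranteed by the paper's normalization that the origin is a Weierstrass point of $C$, resp.\ $(o,o')$) together with the $[-1]_*$-eigenspace decomposition of the Beauville grading of $\CH_0(A)$. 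This buys you a closed-form answer, $\sum_j\{a+na_j\}=[n]_*\alpha-\{0\}$, rather than a mere two-point comparison, and it also makes transparent a small slip in the paper: equation~(\ref{eqn-2}) should read $=\{0\}$ rather than $=0$ (as a degree count shows), which is harmless there since only independence of $c$ is used. So: correct, same strategy, with the $C_k$-arithmetic spelled out more carefully and in stronger, explicit form.
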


\begin{proof}
Every equality appearing in this proof holds in $\CH_0(A)$. Let $(c,a_1,\ldots,a_n), (c',a'_1,\ldots,a'_n) \in p^{-1}(\Delta^{\#}_{C_k}) \cap Z'$. %Note that if $p(c,a_1,\ldots,a_n) = p(c'',b_1,\ldots,b_n)$, then the difference between $c$ and $c''$ is $n$-torsion. So $\hat{p}(c,a_1,\ldots,a_n)$ and $\hat{p}(c'',b_1,\ldots,b_n)$ represent the same class of zero-cycles in $\CH_0(A)$. Thus we may suppose that $$. 
Note that by formula~(\ref{eqn-Blochn})
\begin{equation}\label{eqn-1}
\sum_{j= 1}^n\{n\cdot a_j + c\} = \left\{ c -n \cdot c \right\} +\( \sum_{j= 1}^n\{n\cdot a_j \} + \{n \cdot c\}\) - \(\{n \cdot c\} + \left\{ - n \cdot c\right\} \)+ (n-1)\(\{c\} - \{0\}\).\end{equation}
Since $c \in C_k$, one has
\begin{equation}\label{eqn-2}
\left\{ c -n \cdot c \right\}+ (n-1)\(\{c\} - \{0\}\) = 0
\end{equation}
For the same reason,
 \begin{equation}\label{eqn-3}
 \{n \cdot c\} + \left\{ - n \cdot c\right\} = \{n \cdot c'\} + \left\{ - n \cdot c'\right\}.
\end{equation}
Since $n\cdot Z$ parameterizes zero-cycles in $A$ of constant rational equivalence class, we see that 
\begin{equation}\label{eqn-4}
\{n\cdot c\}+ \sum_j\{n\cdot a_j\} = \{n \cdot c'\}+ \sum_j\{n \cdot a'_j\}.
\end{equation}
Combining identities~(\ref{eqn-1}),~(\ref{eqn-2}),~(\ref{eqn-3}), and~(\ref{eqn-4}), we deduce that 
$$\sum_{j= 1}^n\{n\cdot a_j + c\} = \sum_{j= 1}^n\{n\cdot a'_j + c'\}.$$
\end{proof}

\begin{proof}[Proof of Theorem~\ref{thm-meet}' under the assumption (H)]
Recall that assumption (H) says that ${p}_{|Z'} : Z'  \to A^2_0$ is surjective. On one hand, since $\hat{p}_{|Z'}$ is generically finite by Lemma~\ref{lem-genfin} and since the union $\cup_{k\in \bZ} \Delta^{\#}_{C_k}$ is Zariski dense in $A^2_0$, there exists $l \in \bZ_{>0}$ such that $Z_{C_l}$ is of dimension $i-1$. On the other hand $Z_{C_l} \subset A^{n}_0 $, and by Lemma~\ref{lem-RErec},  $Z_{C_l}$  parameterizes effective zero-cycles of degree $n$ in $A$ of constant class modulo rational equivalence, we can  apply induction hypothesis on $Z_{C_l}$.

If $i <n$, induction hypothesis shows that there exist $a',a'_{i+2},\ldots,a'_n \in A$ and some $k \in \bZ_{>0}$ such that each element in $q_n\(Z_{C_l}\)$ is rationally equivalent \emph{in $K_{(n-1)}$} to 
$$\sum_{j=1}^{i+1}\{ a' + c_j\}+ \sum_{j=i+2}^{n}\{a' +  a_j'\},$$
for all $c_1,\ldots,c_{i+1} \in C_k$ such that $ n \cdot a'  + \sum_{j=1}^{i+1} c_j + \sum_{j=i+2}^{n}a'_j = 0$. As $\phi : C_k \times C_l \to A$ defined by $\phi(c,c') \mapsto c- (n+1)\cdot c'$ is surjective, there exist $c_0 \in C_k$ and $c \in C_l$ such that $n\cdot c = c_0 - c + a'$. Therefore for any $(c,a_1,\ldots,a_n) \in p^{-1}(\Delta_{C_l}) \cap Z'$, whose existence is due to the surjectivity of $p_{|Z'} : Z' \to A^2_0$, the following equality holds in $ \CH_0(K_{(n)})$:
$$\{n \cdot c\} + \sum_{j=1}^n\{n \cdot a_j\} =\sum_{j=0}^{i+1}\{ (a' - c) + c_j\}+ \sum_{j=i+2}^{n}\{ (a' - c) + a_j'\}.$$
Thus if $z \in Z$ satisfies $\nu\(n\cdot z\) = \{n \cdot c\} + \sum_{j=1}^n\{n \cdot a_j\}$,  there exists $z' \in V_{i,nk} $ such that $\nu(z) \sim_\rat z'$ in $K_{(n)}$.

For the remaining case $ i = n$, applying induction hypothesis as before, there exists $c' \in C_k$ for some $k \in \bZ_{>0}$ such that every point in $q_n\(Z_{C_l}\)$ is rationally equivalent in $K_{(n-1)}$ to  $\sum_{j=1}^n \{c' +c_j\}$
for all $c_1,\ldots,c_{n} \in C_k$ such that $n \cdot c' + \sum_{j=1}^{n} c_j  = 0$. The same argument above replacing $a'$ with $c'$ allows to conclude.
\end{proof}

\ssec{General case}\label{ssec-nonsurj}\hfill

Now assume that $Z'$ does \emph{not} verify (H). Then there exist curves $D_1,\ldots D_{n+1} \subset A$ such that 
\begin{equation}\label{pte-inclusion}
Z'  \subset \prod_{j = 1}^{n+1} D_j.
\end{equation}
As $Z'$ is irreducible, up to removing some irreducible components of $D_j$, we can suppose that  $D_j$ is irreducible for all $j$.

\begin{rem}\label{rm-nonpos}

One would expect to prove Theorem~\ref{thm-meet}' in this case by showing directly that 
\begin{equation}\label{eqn-nonvidereve}
\tau(Z',A) \cap\( C^{i+2}_k \times A^{n-1-i}\)  \ne \emptyset
\end{equation}
where $\tau$ is defined in~\eqref{def-tau1}, merely under the assumption~(\ref{pte-inclusion}) just by positivity arguments as in Voisin's proof of~\cite[Theorem $2.1$]{Voisin0cycleK3}.  However the following example shows that the above expectation fails in some cases. Take for example $i = 1, n=2$ and set 
$$Z' \colonec \{(c,c + a,c + a') \mid c\in C_k\} \subset A^3$$
for some fixed $a , a' \in A$. If $a$ and $a'$ are generically chosen, then there is no $c \in C_k$ such that $c+a,c+a' \in C_k$. In other words, $\tau(Z',A) \cap C^{3}_k  = \emptyset$.

We will \emph{not} prove the non-emptiness~(\ref{eqn-nonvidereve}) for any $Z'$ which does not satisfy hypothesis (H). Instead, for those $Z'$ such that~(\ref{eqn-nonvidereve}) might fail, we will reduce the proof of Theorem~\ref{thm-meet}'  to the situation where hypothesis (H) is verified.

\end{rem}
%Since $Z'$ is irreducible, $C'$ is also irreducible.

\sssec{Case $i = n$}\hfill

Under hypothesis~(\ref{pte-inclusion}), we will first prove Theorem~\ref{thm-meet}' for the Lagrangian case, that is for $i = \dim Z' = n$. Since  $Z'  \subset  \prod_{j = 1}^{n+1} D_j \cap A^{n+1}_0$, we see that $\dim  A^{n+1}_0 \cap \prod_{j = 1}^{n+1} D_j = n$.

\begin{lem}
If $\dim  A^{n+1}_0 \cap \prod_{j = 1}^{n+1} D_j = n$ and the image of $\prod_{j = 1}^{n+1} D_j$ under the sum map $\mu : A^{n+1} \to A$ is  $A$, then $n=1$.
\end{lem}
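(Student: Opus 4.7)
The plan is to show that when $\mu|_{\prod_{j=1}^{n+1} D_j}$ is surjective and $n\geq 2$, the intersection $A_0^{n+1}\cap \prod_{j=1}^{n+1} D_j$ has dimension at most $n-1$; combined with the hypothesis that this dimension equals $n$, this forces $n=1$.

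First, I would rewrite the intersection as a preimage. Setting
$$f : \prod_{k=1}^{n} D_k \to A, \qquad (d_1,\dots,d_n) \mapsto -(d_1+\cdots+d_n),$$
projection on the first $n$ coordinates identifies $A_0^{n+1}\cap \prod_{j=1}^{n+1} D_j$ with $f^{-1}(D_{n+1})$, so it suffices to bound $\dim f^{-1}(D_{n+1})\leq n-1$ when $n\geq 2$.

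Next, I would analyze the image $S:=f(\prod_{k=1}^{n}D_k)$, an irreducible closed subvariety of $A$. The key input is a structural result about curves in abelian surfaces: if $D$ and $D'$ are irreducible curves in $A$ with $D+D'\neq A$, then both $D$ and $D'$ are translates of a common elliptic subgroup $E\subset A$ (and then $D+D'$ is itself a translate of $E$). This is proven by observing that the sum map $D\times D'\to A$ fails to be dominant only if the translation stabilizer of $D$ in $A$ contains a positive-dimensional subgroup, which must then be an elliptic subgroup. Applying this fact inductively, either $S=A$, or there exists an elliptic subgroup $E\subset A$ such that all $D_k$ for $k\leq n$ are translates of $E$ and $S$ is itself a translate of $E$.

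I would then split into cases. If $S=A$: the subvariety $f^{-1}(D_{n+1})$ cannot equal the irreducible $n$-dimensional source $\prod_{k=1}^{n}D_k$, for otherwise $S\subseteq D_{n+1}$, so its dimension is at most $n-1$. If $S$ is a curve and $D_{n+1}\neq S$: both being irreducible curves in $A$, their intersection $D_{n+1}\cap S$ is finite, so $f^{-1}(D_{n+1})$ is a finite union of fibers of the surjection $f:\prod_k D_k\to S$, each of dimension at most $n-1$. If $S$ is a curve and $D_{n+1}=S$: then $D_{n+1}$ is also a translate of $E$, and hence all $D_1,\dots,D_{n+1}$ are translates of $E$; but then $\mu(\prod_{j=1}^{n+1} D_j)$ is contained in a translate of $E$, contradicting the hypothesis that this image equals $A$. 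In every permissible case we obtain $\dim f^{-1}(D_{n+1})\leq n-1$, whence the lemma.

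The main obstacle is the last sub-case $D_{n+1}=S$, in which a naive dimension bound gives no contradiction; here one must exploit the structural result about curves in abelian surfaces to derive the contradiction from the surjectivity of $\mu$.
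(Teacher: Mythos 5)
Your proposal is correct, and it reaches the conclusion by a cleaner route than the paper's own argument, while resting on the same underlying mechanism. The paper supposes $n>1$, uses the dimension hypothesis to produce a projection onto $n-1$ of the factors whose restriction to $A^{n+1}_0\cap\prod_{j=1}^{n+1}D_j$ is surjective with one-dimensional general fibers, and deduces that for a general tuple $(c_1,\dots,c_{n-1})$ the entire translate $-\sum_j c_j-D_n$ is contained in $D_{n+1}$, which it declares ``impossible unless $n=1$''; unpacked, that impossibility is exactly the statement that otherwise the translation stabilizer of $D_n$ is positive-dimensional, so all the $D_j$ are translates of a single elliptic subgroup and $\mu\bigl(\prod_j D_j\bigr)$ is a curve --- i.e.\ the surjectivity hypothesis is what rules it out, a point the paper leaves implicit here and only makes explicit in the next lemma (Lemma~\ref{lem-i=n}). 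You instead identify the intersection with $f^{-1}(D_{n+1})$ for the sum map $f$ on the first $n$ factors and run a case analysis on the image $S=-(D_1+\cdots+D_n)$: when $S=A$, or when $S$ is a curve distinct from $D_{n+1}$, you get $\dim\le n-1$ directly from irreducibility of $\prod_{k\le n}D_k$ and finiteness of $S\cap D_{n+1}$; the only case compatible with dimension $n$, namely $D_{n+1}=S$ a curve, is excluded (for $n\ge 2$) by your structural lemma on non-dominant sums of curves in an abelian surface together with the hypothesis $\mu\bigl(\prod_j D_j\bigr)=A$. What your version buys is twofold: it avoids the paper's unjustified choice of a projection surjective on the intersection, and it states explicitly both the structural fact (positive-dimensional stabilizer, hence common elliptic subgroup) and the precise place where surjectivity of $\mu$ is used, which in the paper is compressed into a single unexplained clause. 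One cosmetic caveat: your inductive claim that $S\ne A$ forces all $D_k$, $k\le n$, to be translates of a common elliptic subgroup is only valid for $n\ge 2$, but since your whole argument is run under the standing assumption $n\ge 2$ to derive a contradiction, this does not affect correctness.
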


\begin{proof}
Suppose that $n > 1$. Since $\dim A^{n+1}_0 \cap \prod_{j = 1}^{n+1} D_j = n$, there exists a  projection from $\prod_{j = 1}^{n+1} D_j$ onto a product of $n-1$ factors whose restriction (denoted $r$) to $\prod_{j = 1}^{n+1} D_j \cap A^{n+1}_0$ is surjective; without loss of generality we can suppose $r$ to be the projection $ \prod_{j = 1}^{n+1} D_j \cap A^{n+1}_0\to \prod_{j = 1}^{n-1} D_j$. Since a general fiber of $r$ is one-dimensional, for a general $(n-1)$-uple $(c_1,\ldots,c_{n-1}) \in   \prod_{j = 1}^{n-1} D_j$ and for any $c \in D_n$, there exists $c' \in D_{n+1}$ such that
$$c + c' + \sum_{j=1}^{n-1} c_j = 0,$$
which is impossible unless $n =1$.
\end{proof}

\begin{lem}\label{lem-i=n}
If the image of $\prod_{j = 1}^{n+1} D_j$ under the sum map $\mu : A^{n+1} \to A$ is of dimension $< 2$, then there exists some $k \in \bZ_{>0}$ such that,
$$ Z' \cap \tau(C_k^{n+1},C_k) \ne \emptyset.$$
In particular, Theorem~\ref{thm-meet}' holds when $i = n$ and $Z'$ does not satisfy hypothesis (H).
\end{lem}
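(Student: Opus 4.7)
The plan is to first extract the geometric structure of $Z'$ forced by the hypothesis, then reduce to a product of elliptic curves, and finally exhibit an explicit element showing $k = 1$ suffices.

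First I would analyze $Z'$. Since $\dim \mu(\prod_j D_j) < 2$ and $\mu(\prod_j D_j) \ni 0$, this image is an irreducible curve, which (invoking the previous lemma and the standard stabilizer argument for Minkowski sums in abelian varieties) forces each $D_j$ to be a translate of a common elliptic subcurve $B \subset A$, say $D_j = b_j + B$, with $\sum_j b_j \in B$. Then $\prod_j D_j \cap A^{n+1}_0$ is an irreducible translate of $B^n$ of dimension $n$, and therefore equals $Z'$: namely $Z' = \{(b_j + x_j)_j : x_j \in B,\ \sum_j x_j = -\sum_j b_j\}$.

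Next, since $A$ admits an elliptic subcurve it is non-simple, and I would use Lemma~\ref{lem-isog} to reduce to the product PPAV case $A = E_1 \times E_2$ with $C = (E_1 \times \{0\}) \cup (\{0\} \times E_2)$. In this setting, $B$ is either a coordinate axis or the graph of an isogeny between $E_1$ and $E_2$; taking WLOG $B = E_1 \times \{0\}$, the translates read $D_j = E_1 \times \{\bar b_j\}$ with $\sum_j \bar b_j = 0$ in $E_2$. I would then set $c := 0 \in C$ and $y_j := (0, \bar b_j) \in \{0\} \times E_2 \subset C$; these satisfy $y_j \in D_j$ and $\sum_j y_j = (0, \sum_j \bar b_j) = 0$, hence $(y_j)_j \in Z'$. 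Writing $y_j = y_j + c$ with $y_j, c \in C$ exhibits $(y_j)_j \in \tau(C^{n+1}, C) \subset \tau(C_1^{n+1}, C_1)$, which proves the lemma with $k = 1$. The other cases for $B$ (vertical axis, or graph of an isogeny $\phi$, for which one checks that $(0, \bar b_j)$ still lies in $D_j = \{(x, \phi(x) + \bar b_j) : x \in E_1\}$ by setting $x = 0$) are handled by the same construction.

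The main obstacle is Step~1: the Minkowski-sum/stabilizer argument identifying all $D_j$ as translates of a common elliptic subcurve, while classical, is non-trivial — one must show that an irreducible curve of $A$ arising as a sum of curves whose sum has dimension $1$ must be a translate of an elliptic subvariety, and that each summand must be a translate of the same subcurve. The reduction via Lemma~\ref{lem-isog} is also delicate, since the geometric intersection $Z' \cap \tau(C_k^{n+1}, C_k)$ is not a priori isogeny-invariant (the theta divisor changes under isogeny), so this reduction is legitimate only insofar as the lemma feeds into Theorem~\ref{thm-meet}' through its Chow-theoretic consequence.
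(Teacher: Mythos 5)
Your Step 1 (all $D_j$ are translates of one elliptic curve $E_0$, hence $Z'=A^{n+1}_0\cap\prod_j D_j$) and your treatment of the case $A=E\times E'$ agree with the paper, and your explicit point $((0,\bar b_j))_j\in Z'\cap \tau(C^{n+1},C)$ is a clean way to settle that case with $k=1$. The genuine gap is the other case, where $(A,C)$ is the Jacobian of a smooth genus~$2$ curve but $A$ is non-simple. The displayed assertion of the lemma is a geometric non-emptiness statement about the fixed theta divisor of $A$, namely $Z'\cap\tau(C_k^{n+1},C_k)\neq\emptyset$, and passing to an isogenous product $E_1\times E_2$ changes $C_k$, $V_{n,k}$ and $Z'$; so, as you yourself concede, your argument cannot prove the lemma as stated in this case, only (at best) its ``in particular'' clause. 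But even that weaker transfer is not a consequence of Lemma~\ref{lem-isog}, which merely identifies $\CH_0(K_n(A'))$ with $\CH_0(K_n(A))$: you would still have to (i) replace $Z$ by its preimage $\widetilde Z\subset K_{(n)}(E_1\times E_2)$ and check that it has dimension $n$, parameterizes zero-cycles of constant class in $\CH_0(E_1\times E_2)$ (via Bloch's isogeny invariance) and still fails (H); (ii) run your explicit construction there; and (iii) push the conclusion back to $A$, using that every point of $V_{n,k}$ on either surface is rationally equivalent to $(n+1)\cdot\{0\}$ (Lemma~\ref{lem-critsum}, Proposition~\ref{pro-Ci}) and that $0\in C$. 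None of this bookkeeping appears in your proposal; flagging the reduction as ``delicate'' does not discharge it, and without it the Jacobian case is simply missing.

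The paper avoids the isogeny altogether by a positivity argument on $A$ itself, and this is the idea your proposal lacks: when $C$ is smooth of genus $2$, $C_k$ is not contained in any translate of $E_0$, so the set $F=\bigl\{-c-\sum_{j=1}^{n}(c+c_j)\ \big|\ c\in C_k,\ c+c_j\in(c+C_k)\cap D_j\bigr\}$ is a positive-dimensional subset of a translate $C_k+E_0'$; since $C_k$ is ample, $F\cap C_k\neq\emptyset$, and unwinding the definition produces a point $(c+c_1,\ldots,c+c_{n+1})\in A^{n+1}_0\cap\prod_j D_j=Z'$ lying in $\tau(C_k^{n+1},C_k)$ — note the translating element must itself lie in $C_k$, which is exactly what this argument arranges and what a generic translate would not give. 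So either adopt this direct argument for the Jacobian case, or keep your isogeny route but then restate what you actually prove (only the Chow-theoretic consequence) and write out steps (i)--(iii) above in full.
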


\begin{proof}
By the assumption of Lemma~\ref{lem-i=n}, for all $1 \le j \le n + 1$ and $(c_1,\ldots,c_{n+1}) \in \prod_{j = 1}^{n+1} D_j$,
$$\sum_{l = 1}^{n+1} D_l = \(\sum_{l \ne j } c_l\) + D_j.$$
It follows that there exists an elliptic curve $E_0 \subset A$ such that the $D_j$'s are translations of $E_0$. Thus $ A^{n+1}_0 \cap \prod_{j = 1}^{n+1} D_j $ is irreducible, so $Z' =  A^{n+1}_0 \cap \prod_{j = 1}^{n+1} D_j$.

%for any $c \in C'$, $\tau_{-c}(Z') \subset (C' -c)^{n+1}$ is of dimension $n$. So there exists a standard projection $(C' - c)^{n+1} \to ( C' - c)^{n}$ whose restriction (denoted $r_1$) to $Z'$ is surjective, and in particular, $r_1^{-1}(0,\ldots,0) \ne \emptyset$. Since $Z' \subset A^{n+1}_0$, we deduce that $(c,\ldots,c,-n\cdot c) \in Z'$. 

If $A = E \times E'$, we can suppose without loss of generality that $\tau_a(E_0) \cap \(E \times \{0\}\) \ne \emptyset$ for all $a \in A$; choose $c_j \in D_j \cap \(E \times \{0\}\)$ for all $0 \le j \le n$. Since $0 \in \sum_{j = 1}^{n+1} D_j = \(\sum_{ j = 1}^n c_l\) + D_{n+1}$, there exists $c_{n+1} \in D_{n+1}$ such that $\sum_{j=1}^{n+1} c_j = 0$. Since $c_1,\ldots,c_n \in E \times \{0\}$, we see that $c_{n+1} \in E \times \{0\}$, hence $Z \cap C^{n+1} \ne \emptyset$.

In the case where  $A$ is the Jacobian of a smooth curve,  $C_k$ is not contained in any of the translates of $E_0$. Accordingly,  since
$$F \colonec \left\{-c - \sum_{j=1}^n(c+c_j) \ \big{|} \ c \in C_k, \ c+c_j  \in (c+C_k) \cap D_j \right\} \subset  C_k + E'_0$$
where $E'_0$ is some translation of $E_0$, $F$ is of  dimension $>0$. So  $F \cap C_k$ is non-empty since $C_k$ is ample. Therefore there exist $c,c_1,\ldots, c_{n} \in C_k$ such that $c+c_j  \in (c+C_j) \cap D_j$ and $c_{n+1} \colonec -c - \sum_{j=1}^n(c+c_j)  \in C_k$, so
$$(c + c_1, \ldots, c + c_{n+1}) \in A^{n+1}_0 \cap \prod_{j = 1}^{n+1} D_j = Z'.$$
Hence $ Z' \cap \tau(C_k^{n+1},C_k) \ne \emptyset$.

Finally, choose $z \in Z' \cap \tau(C_k^{n+1},C_k)$. Since $q_{n+1}(Z') = Z \subset K_{(n)}$ and  $V_{n,k} = q_{n+1}(\tau(C_k^{n+1},C_k)) \cap K_{(n)}$, we see that $q_{n+1}(z) \in Z \cap V_{n,k}$. Hence Theorem~\ref{thm-meet}' is proven in this case.

%By assumption, $C' + C' \subset A$ is an irreducible curve, so equals $c + C'$ for any $c \in C'$. It follows that for any $c \in C'$, $C' - c$ is an elliptic curve. Since $C_k$ is ample, we can choose $c \in C_k \cap C'$.

%Since $C'$ is irreducible, $\mu(C'^{n+1})$ is irreducible and containing $n\cdot c + C'$ for any $c \in C'$, so $\mu(C'^{n+1}) = n\cdot c + C'$. As $Z'_1  \subset (C')^{n+1} \cap A^{n+1}_0$, we see that $0 \in \mu(C'^{n+1})$.
\end{proof}
\sssec{Case $i < n$}\hfill

The following lemma allows to conclude the proof of Theorem~\ref{thm-meet}' for the remaining case. 

\begin{lem}\label{lem-i<n}
If $i<n$ and $Z'$ satisfies hypothesis~(\ref{pte-inclusion}), then either the non-emptiness~(\ref{eqn-nonvidereve}) holds, or there exists $Z_1'' \subset A^{n+1}_0$ of dimension $i$ such that  $Z_1''$ verifies hypothesis (H) and all points of $Z_1''$ represent zero-cycles in $A$ of the same rational equivalence class in $A$  as zero-cycles  parameterized by  $Z'$. 
\end{lem}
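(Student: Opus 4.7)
The plan is to distinguish two cases according to whether the restriction of the sum map $s : \prod_{j=0}^{n} D_j \to A$ is surjective. Since $Z'$ fails (H), each coordinate projection $\pi_j(Z')$ is a proper subvariety of $A$, and after passing to irreducible components we may assume that each $D_j$ is an irreducible curve (the degenerate case where some $D_j$ is a point is handled by a direct specialization).

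In the first sub-case, where $s$ is not surjective, one has $\dim s\bigl(\prod_j D_j\bigr) \le 1$, and the argument of the lemma preceding Lemma~\ref{lem-i=n} shows that all $D_j$ are translates of a common elliptic curve $E_0 \subset A$. I would then mimic the proof of Lemma~\ref{lem-i=n} to directly establish the non-emptyness~\eqref{eqn-nonvidereve}: when $A$ is the Jacobian of a smooth genus $2$ curve, $C_k$ is not contained in any translate of $E_0$, so an ampleness argument analogous to the one for the set $F$ in loc.\,cit.\ produces an intersection point; when $A = E \times E'$, the product decomposition yields the conclusion directly. The extra $A^{n-1-i}$ factor in the target provides strictly more flexibility than in the case $i = n$ already treated.

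In the second sub-case, $s$ is surjective. Fix an index $j_0$ so that the partial sum map $\prod_{j \ne j_0} D_j \to A$ is still surjective, and slice $Z'$ by fixing the $j_0$-th coordinate to a general point, obtaining an $(i-1)$-dimensional $Z_0' \subset Z'$. I would then free the $j_0$-th coordinate to range over $A$, compensated by a balanced shift in another coordinate, and define
\[
Z_1'' := \bigl\{\text{tuples obtained from } Z_0' \text{ by adding } a \in A \text{ to the } j_0\text{-th coordinate and } -a \text{ to another}\bigr\}\subset A_0^{n+1}.
\]
This has dimension $i$ and surjects onto $A$ via $\pi_{j_0}$, hence satisfies (H). The preservation of the rational equivalence class follows from Bloch's relation (Lemma~\ref{lem-genfin1}) together with the translation-invariance of the second-difference $\{x+h\} + \{x-h\} - 2\{x\}$ in $\CH_0(A)$.

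The main obstacle is the second sub-case: the shift used to free the $j_0$-th coordinate produces correction terms of the form $\{h\} + \{-h\} - 2\{o\}$ which depend on $h$, so a naive shift does \emph{not} preserve the rational class. Circumventing this will require choosing the shift direction along a constant-cycle subvariety (for instance a fiber of $\mu_2 : C^{(2)} \to A$, or a translate of a $C_k$, or a torsion subgroup) so that the correction collapses to a constant; alternatively, one may invoke the induction hypothesis on $n$ applied to an auxiliary variety derived from $Z'$ to bypass the obstruction entirely.
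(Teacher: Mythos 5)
There is a genuine gap, and it sits exactly where you flag it yourself: in your second sub-case the variety $Z_1''$ obtained by adding $a$ to the $j_0$-th coordinate and $-a$ to another does \emph{not} parameterize zero-cycles of the same class as $Z'$ (the correction $\{x+a\}+\{y-a\}-\{x\}-\{y\}$ genuinely depends on $a$), and this class-preservation is the whole content of the lemma -- it is what makes the reduction to hypothesis (H) useful in the proof of Theorem~\ref{thm-meet}$'$. Your proposed fixes (shift along a constant-cycle direction, or an unspecified appeal to the induction on $n$) are precisely the missing idea, not a routine patch: to shift without changing the class one needs at least three of the shifted points to lie on a translate of $C_k$ so that Lemma~\ref{lem-critsum} applies (fibers of $\mu_2:C^{(2)}\to A$ are single points, so with only two such points nothing can be spread), and locating those $C_k$-points on $Z'$ is itself nontrivial. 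The paper does this first, via the ampleness computation $[Z']\cdot\bigl(\sum_j\pi_j^*[\tau_a(C_k)]\bigr)^i\neq 0$, which gives $Z'\cap\tau_a\bigl(C_k^i\times A^{n-i+1}\bigr)\neq\emptyset$ for every $a$; it then studies the auxiliary variety $Z''$ of residual coordinates, obtains the non-emptiness~(\ref{eqn-nonvidereve}) when $\dim Z''\ge 2$, and only in the case $\dim Z''=1$ with $i>1$ builds $Z_1''$ by redistributing the $i+1$ points lying on $C_k$ with fixed sum (Lemma~\ref{lem-critsum}); the case $i=1$ needs a separate argument (varying $k$ for a Jacobian, a torsion argument for $E\times E'$), which your proposal does not address at all.

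Your first sub-case also does not go through as stated. When the sum map on $\prod_j D_j$ is not surjective you correctly deduce that the $D_j$ are translates of a common elliptic curve $E_0$, but the argument of Lemma~\ref{lem-i=n} that you want to mimic uses crucially that $Z'$ equals the \emph{full} intersection $A_0^{n+1}\cap\prod_j D_j$, which holds only because $\dim Z'=n$ there. For $i<n$ your $Z'$ is a proper, a priori arbitrary, $i$-dimensional subvariety of that $n$-dimensional torsor, so you are no longer free to choose the coordinates $c_j\in D_j$ subject only to the sum constraint, and the ``extra flexibility from the factor $A^{n-1-i}$'' does not by itself produce a point of $\tau(Z',A)\cap\bigl(C_k^{i+2}\times A^{n-1-i}\bigr)$; indeed the paper's Remark preceding this lemma shows that positivity alone can fail to give~(\ref{eqn-nonvidereve}) under~(\ref{pte-inclusion}). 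So both halves of your case division rest on steps that are either unjustified or acknowledged to be incomplete, and the key mechanism of the paper's proof (ampleness of $C_k$ to find points of $Z'$ with many coordinates on a translate of $C_k$, then Lemma~\ref{lem-critsum} to spread them) is what would be needed to close them.
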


\begin{proof}
Since $C_k$ is ample and $Z' \subset \prod_{j=1}^{n+1} D_j$, we see that for all $a \in A$,
$$[Z'] \cdot \(\sum_{j_1<\cdots < j_i}\prod_{l=1}^i\pi_{j_l}^*[\tau_a(C_k)]\) = [Z'] \cdot \(\sum_{j=1}^{n+1}\pi_j^*[\tau_a(C_k)]\)^i \ne 0, $$
where $\pi_j : A^{n+1} \to A$ denotes the $j$-th standard projection. Thus for all $a \in A$, up to permutation of factors, 
\begin{equation}\label{eqn-nonemptyZ'cap}
Z' \cap \tau_{a}\(C_k^i \times A^{n-i+1}\) \ne \emptyset
\end{equation}

Next consider  
\begin{equation*}
Z'' \colonec \left\{ (a_1,\ldots,a_{n-i+1}) \in A^{n-i+1} \ \big{|} \  \tau_a(c_1,\ldots,c_i,a_1,\ldots,a_{n-i+1}) \in Z' \cap \tau_{a}\(C_k^i \times A^{n-i+1}\) 
 \text{ for some } a, c_1,\ldots,c_i \in A \right\}.
\end{equation*}
Since $Z' \subset \prod_{j=1}^{n+1} D_j$, by~(\ref{eqn-nonemptyZ'cap})  the first projection of $\pi : Z'' \to A$ is not constant; in particular $\dim Z'' > 0$. Accordingly  $\pi^{-1}(C_k) \subset Z''$ has codimension $\le 1$. Thus if $\dim Z'' \ge 2$,  one of the standard projection $\pi^{-1}(C_k) \to A$ has positive dimension hence must intersect $C_k$. It follows that up to permutation of factors, 
$$Z' \cap \tau\(C_k^{i+2} \times A^{n-i-1}, A\) \ne \emptyset.$$
So if we choose $z \in Z' \cap  \tau\(C_k^{i+2} \times A^{n-i-1}, A\)$, since $q_{n+1}(Z') = Z \subset K_{(n)}$ and  $V_{i,k} = q_{n+1}(\tau(C_k^{i+2} \times A^{n-i-1}, A)) \cap K_{(n)}$, we see that $q_{n+1}(z) \in Z \cap V_{i,k}$. Thus Theorem~\ref{thm-meet}' is proven in this case.

If $\dim Z'' = 1$, then by~(\ref{eqn-nonemptyZ'cap}), for all $\ul{a} \colonec (a_1,\ldots,a_{n-i+1}) \in Z''$, there exists a curve $D_{\ul{a}} \subset A$ such that for all $a \in D_{\ul{a}}$, there exist  $c_1,\ldots,c_i \in C_k$ such that
$$\tau_a(c_1,\ldots,c_i,a_1,\ldots,a_{n-i+1}) \in Z' \cap \tau_{a}\(C_k^i \times A^{n-i+1}\).$$
It follows that for all $1 \le j\le  n-i+1$,
\begin{equation}\label{translinv}
a_j + D_{\ul{a}} = D_{i+j}.
\end{equation}
On the other hand, as $\tau_a(c_1,\ldots,c_i,a_1,\ldots,a_{n-i+1})  \in Z' \subset A^{n+1}_0$,
\begin{equation}\label{agacant}
(n+1)\cdot a+ \sum_{j=1}^{i} c_j + \sum_{l=1}^{n-i+1} a_l = 0.
\end{equation}
for all $a \in D_{\ul{a}}$. So if $i = 1$, we see that  a translation of  $-(n+1) \cdot D_{\ul{a}}$ is contained in $C_k$.
Since $D_{\ul{a}}$ does not depend on $k$ by~(\ref{translinv}), we deduce that if $A$ is a jacobian of a smooth curve, there exists $k \in \bZ_{>0}$ such that $\dim Z'' \ne 1$. Hence $Z' \cap \tau\(C_k^{3} \times A^{n-2}, A\) \ne \emptyset$ for such a $k$, so Theorem~\ref{thm-meet}' is proven in this situation by the same argument above. Still in the case where $i =1$, if $A = E \times E'$, without loss of generality we can suppose that $D_{\ul{a}}$ is a translation of $E \times\{0\}$. So for each $1 \le j \le n+1$, $D_j$ is also a translation of $E \times\{0\}$. Since $\dim Z'  >0$ and $Z' \subset A^{n+1}_0$, there exists $(x_1,\ldots,x_{n+1}) \in Z' \subset A^{n+1}$ such that the projection of $x_j - x_l \in A$ to $E$ is $k$-torsion for some $j$ and $l$; without loss of generality we can assume $j =2$ and $l = 3$. If $y$ denotes the projection of $x_2 \in A$ onto $E$, we see that
$$\tau_{- x_1 -(y,0)}(x_1,\ldots,x_{n+1}) \in C_k^3 \times A^{n-2}.$$
Hence we also have  $ Z' \cap \tau(C_k^3 \times A^{n-2},A) \ne \emptyset$.

There remains the case where  $i > 1$. Since $C_k$ is ample, without loss of generality there exists $\ul{a} \colonec (a_1,\ldots,a_{n-i+1}) \in Z''$ such that $a_1 \in C_k$. 
Define
\begin{equation*}
Z''_1 \colonec \left\{ \tau_a(c'_1,\ldots,c'_{i+1},a_2,\ldots,a_{n-i+1}) \in A^{n+1} \  \Bigg{|} 
\begin{split}
&\  \sum_{j=1}^{i+1} c'_j =a_1 + \sum_{j=1}^i c_j, \text{ for some } c_1,\ldots,c_i \in C_k \text{ such that }  \\
&  \  \tau_a(c_1,\ldots,c_i,a_1,\ldots,a_{n-i+1}) \in Z' \cap \tau_{a}\(C_k^i \times A^{n-i+1}\)  \text{ for some } a \in D_{\ul{a}} 
  \end{split} 
  \right\}.
\end{equation*}
On one hand, by Lemma~\ref{lem-critsum}, the zero-cycles in $A$ parameterized by $Z''_1 \subset A^{n+1}_0$ have the same class in $\CH_0(A)$ as the one parameterized by $Z'$. On the other hand, it is easy to see that $\dim Z''_1 = i$. If $i > 1$, then the image of $Z''_1$ under some standard projection $A^{n+1} \to A$ is $A$. We conclude that $Z''_1$ satisfies hypothesis (H).
%Thus up to replacing $Z'$ by $Z''_1$, we have reduced the situation to the case where the restriction to $Z'$ of some standard projection $A^{n+1} \to A$ is surjective, which will be treated in the next subsection.
\end{proof}

\begin{rem}
The properties~(\ref{translinv}) and~(\ref{agacant}) imply that for all $a \in D_{\ul{a}}$ and $1 \le j \le i$, $c_j \in C_k \cap (D_j - a)$  and $\sum_{j=1}^{i} c_j$ belongs to a translation of  $(n+1) \cdot D_{\ul{a}}$. Given these restrictive conditions, it would be possible to conclude directly as in the case $i=1$ that there exists $k \in \bZ_{>0}$ such that $\dim Z'' \ne 1$.
\end{rem}

\begin{rem}
The reason why we distinguish the cases $i= n$ and $i<n$ in the proof is essentially because $V_{i,k} =  q_{n+1}(\tau(C_k^{i+2} \times A^{n-i-1}, A)) \cap K_{(n)}$ for $i<n$ and $V_{n,k}= q_{n+1}(\tau(C_k^{i+1}, C_k)) \cap K_{(n)}$ have different form: $V_{i,k}$ is first of all the union of all translates by elements in $A$ of $q_{n+1}(C_k^{i+2} \times A^{n-i-1})$ then intersected with $K_{(n)}$, whereas $V_{n,k}$ is only the union of all translates by elements \emph{in $C_k$} of $q_{n+1}(C_k^{n+1})$, then intersected with $K_{(n)}$.
\end{rem}

Theorem~\ref{thm-meet}' now results from the combination of the proof of  Theorem~\ref{thm-meet}' under hypothesis (H) and Lemmata~\ref{lem-i=n} and~\ref{lem-i<n}.

\end{proof}

%\begin{lem}
%Every $x \in V_{i,k}$ is rationally equivalent (in $K_n^\circ(A)$) to some point $x' \in V_{i,1}$.
%\end{lem}

The following result is an important corollary of Theorem~\ref{thm-meet}'.

\begin{cor}~\label{cor-main}
If $x$ is a point of $K_n$ such that $\dim O_x \ge i$, then $\nu(x)$ is rationally equivalent to some zero-cycle supported on $V_{i,k}$ for some $k$. In other words,
$$\nu_* S_i\CH_0(K_n) = \bigcup_{k \in \bZ_{>0}}\Ima \(\CH_0(V_{i,k}) \to \CH_0(K_{(n)})\).$$
\end{cor}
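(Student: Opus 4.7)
The plan is to deduce this corollary directly from Theorem~\ref{thm-meet}. Given a point $x \in K_n$ with $\dim O_x \ge i$, the set $O_x$ is a countable union of Zariski closed subsets, so one can extract an irreducible closed subvariety $Z$ of dimension exactly $i$ contained in $\overline{O_x}$ and containing $x$ (pick an irreducible component of dimension $\ge i$ and, if necessary, cut it down by generic hyperplane sections). By construction all points of $Z$ are rationally equivalent to $x$, hence to one another, in $K_n$. Applying the incidence correspondence $\CH_0(K_n) \to \CH_0(A)$, the zero-cycles in $A$ parameterized by points of $Z$ all lie in a single rational equivalence class. Theorem~\ref{thm-meet} then supplies $k \in \bZ_{>0}$, a point $y \in \nu^{-1}(V_{i,k})$, and $z \in Z$ with $y \sim_\rat z$ in $K_n$. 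Since $z \sim_\rat x$ as well, applying $\nu_*$ yields $\nu(x) \sim_\rat \nu(y)$ in $\CH_0(K_{(n)})$ with $\nu(y) \in V_{i,k}$, which proves the first assertion.

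For the set-theoretic equality, the inclusion $\nu_* S_i\CH_0(K_n) \subseteq \bigcup_{k>0}\Ima\(\CH_0(V_{i,k}) \to \CH_0(K_{(n)})\)$ is immediate from the first assertion, since $S_i\CH_0(K_n)$ is generated by classes of points $x$ with $\dim O_x \ge i$. For the reverse inclusion, Proposition~\ref{pro-Ci} asserts that $V_{i,k}$ is a $C^i$-subvariety of $K_{(n)}$; by Remark~\ref{rem-quotsing}, its proper transform $\widetilde{V_{i,k}}$ in $K_n$ is a $C^i$-subvariety too, and hence is swept out by constant cycle subvarieties of dimension $i$ by~\cite{VoisinHKcoisotp}. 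Consequently the image of $\CH_0(\widetilde{V_{i,k}})$ in $\CH_0(K_n)$ is contained in $S_i\CH_0(K_n)$, and since $\nu_*:\CH_0(\widetilde{V_{i,k}}) \to \CH_0(V_{i,k})$ is surjective (again by Remark~\ref{rem-quotsing}), applying $\nu_*$ gives the reverse inclusion.

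All the substantive work is already contained in Theorem~\ref{thm-meet}; the present corollary is essentially a packaging of that result with the $C^i$-property of the $V_{i,k}$, so no new obstacle arises. The only mildly delicate point is the initial reduction---trimming $\overline{O_x}$ down to an irreducible $i$-dimensional subvariety on which the hypothesis of Theorem~\ref{thm-meet} is met---which is harmless thanks to the countable-union description of $O_x$ together with a standard generic hyperplane section argument.
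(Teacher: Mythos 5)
Your argument is correct and follows essentially the same route as the paper: pass from $\dim O_x\ge i$ to an $i$-dimensional constant cycle subvariety $Z$, note via the incidence correspondence $K_n\times A$ that $Z$ parameterizes zero-cycles of constant class in $\CH_0(A)$, and apply Theorem~\ref{thm-meet}, with the reverse inclusion coming from Proposition~\ref{pro-Ci} and the fact that $C^i$-subvarieties are swept out by $i$-dimensional constant cycle subvarieties. The only (harmless) inaccuracy is requiring $Z$ to contain $x$: a maximal-dimensional component of $O_x$ need not pass through $x$, but this is irrelevant since every point of $O_x$ is rationally equivalent to $x$ anyway.
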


\begin{proof}
The incidence correspondence $\Gamma \subset K_n \times A$ sends the class of a point $z \in K_n$ to the class of the zero-cycle in $A$ by which $z$ represents. So a constant cycle subvariety in $K_n$ parameterizes   zero-cycles in $A$ of constant class modulo rational equivalence. We apply Theorem~\ref{thm-meet}' to conclude.
\end{proof}

\section{The induced Beauville decomposition on generalized Kummer varieties}\label{sec-Beauvillefiltr}

We define in this section another filtration on $\CH_0(K_n)$ coming from the  Beauville decomposition of an abelian variety. 

\ssec{Description of the Beauville decomposition}\hfill

Recall in~\cite{BeauvilleDecAb} that for any abelian variety $B$, the Chow ring (with rational coefficients) of $B$ has a canonical ring grading called the \emph{Beauville decomposition}
\begin{equation}\label{Bdecomp}
\CH^p(B) = \bigoplus_{s= p-g}^p \CH^p(B)_s
\end{equation}
for $0 \le p \le g \colonec \dim B$, where
$$\CH^p(B)_s \colonec \{z \in \CH^p(B) \mid [m]^*z = m^{2p - s}z \text{ for all } m \in \bZ \}.$$
Based on~\cite{BeauvilleDecAb,DeningerMurre,Kunnemann,Shermenev},  the Künneth decomposition of the cohomological class of the diagonal $[\Delta] = \sum_{j=1}^{2g}[\Delta_j] \in H^{2g}(B \times B,\bQ)$ (where $[\Delta_j]$ is the component inducing the identity map on $H^j(B,\bQ)$) lifts to a decomposition $\Delta = \sum_{j=1}^{2g}\Delta_j \in \CH^{g}(B \times B)$ such that ${\Delta_j}_*$ acts as the projector $\CH^p(B) \to \CH^p(B)_{2p-j}$ for all $p$~\cite[\S 2.5]{MurreFiltrBBM}. Such a decomposition of $\Delta \in \CH^{g}(B \times B)$ is called a Chow-Künneth decomposition of $B$. As the Beauville decomposition is multiplicative, by~\cite[Theorem 5.2]{JannsenFBBM} if the Bloch-Beilinson filtration $F_{BB}^\bullet$ on $\CH^\bullet(B)$ exists, then the Beauville decomposition would give a splitting of $F_{BB}^\bullet\CH^\bullet(B)$. 

Let $A^{n+1}_0$ denote the kernel of the sum map $\mu : A^{n+1} \to A$. The symmetric group $\fS_{n+1}$ acts on $A^{n+1}_0$ and the resulting quotient variety is $K_{(n)}$. Let $q : A^{n+1}_0 \to K_{(n)}$ denote the quotient map. Since $[m] : A^{n+1}_0 \to A^{n+1}_0$ commutes with the action of $\fS_{n+1}$ permuting the factors for each $m \in \bZ$, 
\begin{equation}\label{eqn-decomp}
q^* : \CH_0(K_{(n)}) \eto  \(\bigoplus_{s= 0}^{2n} \CH_0\(A^{n+1}_0\)_s\)^{\fS_{n+1}} = \bigoplus_{s= 0}^{2n}  \CH_{0}\(A^{n+1}_0\)^{\fS_{n+1}}_s.
\end{equation}
 Again, we recall that throughout this article the Chow groups are defined with rational coefficients. Since $q_*q^* :  \CH_0(K_{(n)}) \to \CH_0(K_{(n)})$ is the multiplication by $(n+1)!$~\cite[Example $1.7.6$]{Fulton}, hence bijective, the restriction to $\Ima(q^*)$  of the map
 $$q_* : \bigoplus_{s= 0}^{2n}  \CH_{0}\(A^{n+1}_0\)^{\fS_{n+1}}_s \to \CH_0(K_{(n)})$$
 is also bijective. Therefore we obtain the following decomposition
 \begin{equation}\label{eqn-BeauvilleDec}
 \CH_0(K_{(n)}) = \bigoplus_{s= 0}^{2n}   \CH_0(K_{(n)})_s,
 \end{equation}
where
$$  \CH_0(K_{(n)})_s \colonec  q_*\CH_{0}\(A^{n+1}_0\)^{\fS_{n+1}}_s = q_*\CH_{0}\(A^{n+1}_0\)_s.$$
The decomposition (\ref{eqn-BeauvilleDec}) of $\CH_0(K_{(n)})$ is called the \emph{induced Beauville decomposition}. Since the Hilbert-Chow morphism induces an isomorphism $\nu_* : \CH_0(K_{n}) \to \CH_0(K_{(n)})$ (see Remark~\ref{rem-quotsing}), this  also defines a decomposition on $\CH_0(K_{n})$.

\ssec{The vanishing of $\CH_0(K_{n})_{\text{odd}}$}\hfill

The goal of this subsection is to prove Theorem~\ref{thm-van}, which is a consequence of the following

\begin{thm}\label{thm-invtriv}
The involution of $A^{n + 1}_0$  acts trivially on $\CH_0(A^{n + 1}_0)^{\fS_{n+1}}$.
\end{thm}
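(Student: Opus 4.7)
The plan is to use that $\iota=[-1]$ is a group automorphism of the abelian variety $A^{n+1}_0$, so that $\iota^*$ commutes with all the multiplication maps $[m]$ and hence preserves the Beauville decomposition~\eqref{Bdecomp}. Specializing the defining relation $[m]^*z=m^{4n-s}z$ to $m=-1$, we get $\iota^* z=(-1)^{4n-s}z=(-1)^s z$ on $\CH_0(A^{n+1}_0)_s$. Since the $\fS_{n+1}$-action on $A^{n+1}_0$ is also by group automorphisms (it permutes coordinates linearly while preserving the sum-zero condition), it preserves the Beauville decomposition as well. Consequently, Theorem~\ref{thm-invtriv} is equivalent to the vanishing
$$\CH_0(A^{n+1}_0)^{\fS_{n+1}}_s=0 \qquad \text{for all odd } s.$$

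To establish this vanishing for $s=1$, I would invoke Bloch's isomorphism $\CH_0(B)_1\simeq B(\bC)\otimes_\bZ\bQ$ valid for any abelian variety $B$. As an $\fS_{n+1}$-module over $\bQ$, the abelian variety $A^{n+1}_0\otimes\bQ$ is $V_{\text{std}}\otimes(A(\bC)\otimes\bQ)$, where $V_{\text{std}}$ is the standard $n$-dimensional irreducible representation of $\fS_{n+1}$; since $V_{\text{std}}^{\fS_{n+1}}=0$, we get $\CH_0(A^{n+1}_0)^{\fS_{n+1}}_1=0$ immediately.

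For odd $s\geq 3$, I would exploit the Pontryagin product $\ast$ on $\CH_0(A^{n+1}_0)$. The Beauville decomposition is compatible with $\ast$, and $\CH_0(A^{n+1}_0)_s$ is generated by $s$-fold Pontryagin products of classes in $\CH_0(A^{n+1}_0)_1$; together with the $\fS_{n+1}$-equivariance of $\ast$, this realizes $\CH_0(A^{n+1}_0)^{\fS_{n+1}}_s$ as a quotient of $\Sym^s(V_{\text{std}}\otimes A(\bC)\otimes\bQ)^{\fS_{n+1}}$. Alternatively, one can attack the statement directly by showing that for every $P=(a_1,\ldots,a_{n+1})\in A^{n+1}_0$, the symmetrized zero-cycles $\sum_{\sigma\in\fS_{n+1}}[\sigma P]$ and $\sum_{\sigma\in\fS_{n+1}}[\sigma\iota P]$ agree in $\CH_0(A^{n+1}_0)$. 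A key input here is Bloch's identity~\eqref{eqn-Bloch} on $A$, which when applied to a triple $(a_1,a_2,a_3)$ with $a_1+a_2+a_3=0$ collapses to $\sum_j[a_j]=\sum_j[-a_j]$ in $\CH_0(A)$; the task is to lift analogous relations from $A$ to $A^{n+1}_0$ via appropriate correspondences, or to apply Bloch's identity directly in the $2n$-dimensional abelian variety $A^{n+1}_0$.

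The main obstacle is the odd $s\geq 3$ case: representation-theoretically, $\Sym^s(V_{\text{std}}\otimes A)^{\fS_{n+1}}$ is generally nonzero for odd $s$ (already the case $s=3$, $n=2$ gives a nontrivial invariant), so the vanishing cannot come from invariant theory alone and must be produced by the Bloch-type relations intrinsic to the abelian variety $A^{n+1}_0$. Controlling these relations uniformly in $n$ and showing that they kill precisely the odd-degree $\fS_{n+1}$-invariants is where the combinatorial work is concentrated.
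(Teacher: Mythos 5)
Your reduction to the vanishing of the odd graded pieces, and your treatment of $s=1$ via $\CH_0(B)_1\simeq B(\bC)\otimes\bQ$ and the standard representation, are correct; but the heart of the theorem is precisely the part you leave open, namely the odd pieces with $s\ge 3$ (equivalently, the actual rational-equivalence relation forcing the $\fS_{n+1}$-symmetrizations of $[P]$ and $[\imath P]$ to coincide). As you yourself observe, the invariant-theoretic bound through $\Sym^s(V_{\mathrm{std}}\otimes A(\bC)\otimes\bQ)^{\fS_{n+1}}$ cannot yield the vanishing, and the alternative you gesture at (lifting Bloch-type relations to $A^{n+1}_0$) is not carried out. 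So the proposal has a genuine gap at its central step: no mechanism is exhibited that actually kills the odd invariants.

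The missing idea in the paper is concrete and quite short. One transports everything to $A^n\simeq A^{n+1}_0$, where $\fS_n$ permutes the factors and the extra transpositions act by $t_i\cdot(z_1,\ldots,z_n)=(z_1,\ldots,z_{i-1},-\sum_j z_j,z_{i+1},\ldots,z_n)$, so that $p_i\circ t_i=-\mu$ and $p_j\circ t_i=p_j$ for $j\ne i$, with $\mu$ the sum map $A^n\to A$. Any zero-cycle on $A^n$ is a sum of products $p_1^*z_1\cdots p_n^*z_n$ with each $z_j\in\CH_0(A)$ either $\imath$-invariant or $\imath$-anti-invariant. The key lemma is that for an anti-invariant factor $z_i$ (so $z_i\in\CH_0(A)_1$) one has $\mu^*z_i=\sum_{j}p_j^*z_i$: take the Fourier--Mukai transform $L_i=\cF(z_i)\in\Pic^0(A)$, apply the seesaw theorem to get $\mu^*L_i=\sum_j p_j^*L_i$, and transform back. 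Since moreover the product of two zero-cycles pulled back from the same surface factor vanishes for dimension reasons, this gives $t_i^*(p_1^*z_1\cdots p_n^*z_n)=-\,p_1^*z_1\cdots p_n^*z_n$, hence the $\fS_{n+1}$-symmetrization of any product containing an anti-invariant factor is zero; the invariant part is therefore spanned by products of $\imath$-invariant classes, on which the involution acts trivially. This single identity is exactly the ``Bloch-type relation uniform in $n$'' whose absence you flag, and without it (or a worked-out substitute, e.g.\ a genuine induction on Bloch's identity in $A^{n+1}_0$) your argument does not close. Note also that the paper proves the triviality of the involution first and deduces the vanishing of the odd pieces from it, rather than the other way around, thereby bypassing any degree-by-degree analysis of Pontryagin products.
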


\begin{proof}

%More generally, we will first construct an isomorphic correspondence described below at the level of  $\CH_0$ between $A^{n + 1}_0$ and the self-product of the quotient of $A$ by the involution action, then conclude that the involution action defined by that of $A$ acts trivially on $\CH_0(A^{n + 1}_0)^{\fS_{n+1}}$.

Instead of studying $\CH_0(A^{n + 1}_0)^{\fS_{n+1}}$, we will show that the involution of $A^{n}$ acts trivially on $\CH_0(A^{n})^{\fS_{n+1}}$, where the action of $\fS_{n+1}$ on $A^n$ is given by the action of $\fS_{n+1}$ on $A^{n + 1}_0$ \emph{via} the isomorphism 
$$A^{n + 1}_0 \eto A^n \ \ \ \ \ \(a_1,\ldots, a_{n}, -\sum_{j=1}^na_j\) \mapsto \(a_1,\ldots, a_{n}\).$$
Explicitly, if we identify  $\fS_{m}$ with the permutation group of $\bZ \cap [1,m]$ for each $m$ so that  $\fS_{n}$ is considered as a subgroup of $\fS_{n+1}$, then the action of $ \fS_{n+1}$ on $A^n$ is determined by the action of $\fS_{n} \subset \fS_{n+1}$ on $A^n$  permuting the factors, and by the action of any transposition $t_i$  exchanging $i$ and $n+1$ for $1 \le i \le n$ defined by
\begin{equation}\label{eqn-action}
t_i \cdot (z_1,\ldots,z_n) \colonec \(z_1,\ldots,z_{i-1}, -\sum_{j=1}^n z_j,z_{i+1},\ldots z_n\).
\end{equation}

For $1 \le j \le n$, let $p_j : A^n \to A$ denote the $j$-th projection. Since every zero-cycle $z \in \CH_0(A^n)$ can be decomposed as a sum of zero-cycles of the form $p_1^*z_1 \cdots p_n^*z_n$ where for each $j$, $z_j \in \CH_0(A)$ is either $\imath$-invariant or $\imath$-anti-invariant, Theorem~\ref{thm-invtriv} is a consequence of the following

\begin{lem}\label{lem-ratshit}
Let $z_1,\ldots,z_n \in \CH_0(A)$ be as above. If there exists $i$ such that $z_i$ is $\imath$-anti-invariant, then 
$$\sum_{\gs \in \fS_{n+1}}\gs^*\(\prod_{j=1}^n p_j^*z_j\) = 0.$$
\end{lem}

Before proving Lemma~\ref{lem-ratshit}, we note that

\begin{lem}\label{lem-seesawFM}
If $z_i$ is $\imath$-anti-invariant, then $\mu^*z_i = \sum_{j=1}^n p_j^*z_i$.
\end{lem}

\begin{proof}
Since  $z_i$ is $\imath$-anti-invariant, $z_i \in \CH_0(A)_1$
Let $L_i \colonec \cF(z_i) \in \Pic^0(A)$ be the Fourier-Mukai transform of $z_i$~\cite{BeauvilleTransfFM}. It follows from the Seesaw theorem~\cite[page 54]{MumfordAb} that $\mu^*L_i = \sum_{j=1}^n p_j^*L_i$. Applying Fourier-Mukai transform $\cF$ on the both sides of the preceding identity yields the result.
\end{proof}

\begin{proof}[Proof of Lemma~\ref{lem-ratshit}]
By definition, $p_i \circ t_i = -\mu$ and $p_j \circ t_i = p_j$ for all $i \ne j$. Together with Lemma~\ref{lem-seesawFM}, we see that
%\begin{lem}
\begin{equation}
\begin{split}
 t_i^*\(\prod_{j=1}^n p_j^*z_j\) = \prod_{j=1}^n t_i^*\(p_j^*z_j\) = (-\mu)^*z_i \cdot  \prod_{j\ne i}^n p_j^*z_j = - \prod_{j=1}^n p_j^*z_j.
% = p_1^*z_1 \cdots  p_{i-1}^*z_{i-1} \cdot -\mu^*z_i \cdot  p_{i+1}^*z_{i+1} \cdots  p_n^*z_n =  \prod_{j=1}^n p_j^*z_j.
\end{split}
\end{equation}
%- \prod_{j=1}^n p_j^*z_j.$$
%\end{lem}
%\begin{proof}
%\end{proof}

Hence
$$\sum_{\gs \in \fS_{n+1}}\gs^*\(\prod_{j=1}^n p_j^*z_j\) = \sum_{\gs \in \fA_{n}} \gs^*\(\prod_{j=1}^n p_j^*z_j+  t_i^*\(\prod_{j=1}^n p_j^*z_j\)\) = 0,$$
where $\fA_{n+1} < \fS_{n+1}$ stands for the alternating subgroup of $n+1$ elements.
\end{proof}

%\begin{thm}
%Let $\gS$ denote the quotient of $A$ by the involution action. Let $A^n$ be endowed with the $\fS_{n+1}$-action defined above. Then the quotient map $A^{n} \to \gS^{n}$ induces an isomorphism
%$$ \CH_0(A^{n})^{\fS_{n+1}} \eto  \CH_0(\gS^{n})^{\fS_{n+1}}.$$ 
%In particular,  for all $p \in \bZ$,
%$$\CH_0(K_{(n)})_{2p + 1} = 0.$$
 %\end{thm}
%\begin{proof}

%Let
%$$\CH_0(A^{n}) = \CH_0(A^{n})_+ \oplus \CH_0(A^{n})_-$$
%be the decomposition of $\CH_0(A^{n})$ such that the involution $(-1)_{A^{n}}$ acts as the identity map $\Id$ on $\CH_0(A^{n})_+$ and as $-\Id$ on $\CH_0(A^{n})_-$.

\end{proof}

\begin{proof}[Proof of Theorem~\ref{thm-van}]
By definition of the induced Beauville decomposition, the $\imath$-anti-invariant part of $\CH_0(K_{(n)})$ is identified with $\oplus_{p \in \bZ} \CH_0(K_{(n)})_{2p + 1}$. Thus by Theorem~\ref{thm-invtriv}, $\CH_0(K_{(n)})_{2p + 1}$ vanishes for all $p \in \bZ$.
\end{proof}

\begin{rem}

Let $\gS$ denote the quotient of $A$ under the involution $\imath$ and $\wt{\gS}$ the Kummer $K3$ surface defined by $A$. The action~\eqref{eqn-action} of $\fS_{n+1}$ on $A^n$ descends to an action on $\gS^n$. The quotient map $A \to \gS$ induces  a morphism 
$$ \CH_0(\gS^{(n)}) \to \CH_0(A^{(n)})$$
whose restriction to $ \CH_0(\gS^n)^{\fS_{n+1}}$ is isomorphic to the $\imath$-invariant part of $ \CH_0(A^n)^{\fS_{n+1}} \simeq \CH_0(K_n)$ (so the whole  $\CH_0(A^n)^{\fS_{n+1}} $ by Theorem~\ref{thm-invtriv}). However, the correspondence which defines the isomorphism $ \CH_0(\gS^n)^{\fS_{n+1}} \eto  \CH_0(K_n)$ does not come from a morphism $f : \wt{\gS}^{[n]} \to K_n$, which is why we cannot compare the rational orbit filtration of $\CH_0(\wt{\gS}^{[n]})$ and that of $\CH_0(K_n)$.

\end{rem}

We finish this section by stating the Chow-Künneth decomposition for zero-cycles on $K_n$, which is a direct consequence of the existence of Chow-Künneth decomposition for abelian varieties.

\begin{pro}\label{pro-ChowKun}
There exist $\pi_1,\ldots,\pi_n \in \CH^{2n}\(K_{n} \times K_{n}\)$ such that for all $1 \le j \le n$, 
\begin{enumerate}[i)]
\item ${\pi_j}_*$ acts as the projection $\CH_0(K_{n}) \to \CH_0(K_{n})_{2j}$ with respect to decomposition~\ref{eqn-decomp};
\item $\pi_j^*$ acts as the identity map on $H^{0}(K_n,\gO_{K_n}^l)$ if $l = 2j$ and as $0$ otherwise.
\end{enumerate}
\end{pro}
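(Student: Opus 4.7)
The plan is to transfer a Chow--K\"unneth decomposition of the abelian variety $B \colonec A^{n+1}_0$ (of dimension $2n$) to $K_n$ via the quotient map $q : B \to K_{(n)}$ and the Hilbert-Chow morphism $\nu : K_n \to K_{(n)}$. Since $B$ is an abelian variety, by Shermenev, Deninger--Murre, and K\"unnemann, it admits a Chow--K\"unneth decomposition: orthogonal idempotents $\Delta_k^B \in \CH^{2n}(B \times B)$, $0 \le k \le 4n$, summing to $[\Delta_B]$, such that ${\Delta_k^B}_*$ acts as the K\"unneth projector onto $H^k(B,\bQ)$ and as the projector onto $\CH^p(B)_{2p-k}$ for every $p$. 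Since $\fS_{n+1}$ acts on $B$ by group automorphisms, functoriality of the Deninger--Murre construction (alternatively, averaging) ensures that each $\Delta_k^B$ is invariant under the diagonal $\fS_{n+1}$-action on $B \times B$.

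Next, I would descend to $K_{(n)}$: for $1 \le j \le n$, set
\[
\pi_j^{K_{(n)}} \colonec \tfrac{1}{(n+1)!}\,(q \times q)_*\Delta_{4n-2j}^B \in \CH^{2n}(K_{(n)} \times K_{(n)}).
\]
A direct computation using the projection formula and $q_*q^* = (n+1)!\cdot \Id$ yields $(\pi_j^{K_{(n)}})_* = \tfrac{1}{(n+1)!}\,q_* \circ (\Delta_{4n-2j}^B)_* \circ q^*$, which under the isomorphism~\eqref{eqn-decomp} coincides with the projector $\CH_0(K_{(n)}) \to \CH_0(K_{(n)})_{2j}$.

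I would then lift to $K_n$ through the graph $\Gamma_\nu \subset K_n \times K_{(n)}$ of $\nu$ by setting
\[
\pi_j \colonec \Gamma_\nu^t \circ \pi_j^{K_{(n)}} \circ \Gamma_\nu \in \CH^{2n}(K_n \times K_n),
\]
the composition being well defined with rational coefficients within the framework of Remark~\ref{rem-quotsing}. Then $(\pi_j)_* = \nu^* \circ (\pi_j^{K_{(n)}})_* \circ \nu_*$, and since $\nu^* = (\nu_*)^{-1}$ on $\CH_0$ by Remark~\ref{rem-quotsing}, this yields the desired projector onto $\CH_0(K_n)_{2j}$, establishing (i).

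For (ii), I would use that the transpose of $\Delta_{4n-2j}^B$ is the Chow--K\"unneth projector $\Delta_{2j}^B$, so $(\pi_j)^*$ on cohomology equals $\nu^* \circ (\tfrac{1}{(n+1)!}q_*\circ(\Delta_{2j}^B)_*\circ q^*) \circ \nu_*$, and $(\Delta_{2j}^B)_*$ is the K\"unneth projector onto $H^{2j}(B,\bC) \supset H^0(B,\Omega^{2j}_B)$. The $\fS_{n+1}$-invariant subspace of $H^0(B,\Omega^l_B)$ corresponds via $q$ and the crepant resolution $\nu$ to $H^0(K_n,\Omega^l_{K_n})$, under which the symplectic powers $\sigma_B^{l/2}$ match (up to nonzero scalar) with $\sigma_{K_n}^{l/2}$. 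Since $H^0(K_n,\Omega^l_{K_n})$ is one-dimensional for $l$ even in $[0,2n]$ and zero otherwise, the desired property follows: $(\pi_j)^*$ fixes $\sigma_{K_n}^j$ and annihilates $\sigma_{K_n}^{l/2}$ for $l \ne 2j$. The main obstacle is this cohomological check, since it requires tracking the action through the singular quotient $K_{(n)}$ and identifying the Hodge space $H^0(K_n,\Omega^{2j}_{K_n})$ with a line inside $H^0(B,\Omega^{2j}_B)^{\fS_{n+1}}$ on which the K\"unneth projector $(\Delta_{2j}^B)_*$ restricts to the identity.
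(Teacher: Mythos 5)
Your construction is essentially the paper's proof: the paper likewise takes the Chow--K\"unneth/Beauville projectors $\Delta_{4n-2j}$ of the abelian variety $A^{n+1}_0$, pushes them forward under $q_{n+1}\times q_{n+1}$, and lifts the result through the Hilbert--Chow morphism, leaving the verifications of (i) and (ii) implicit. Your extra details (the $\tfrac{1}{(n+1)!}$ normalization coming from $q_*q^*=(n+1)!\cdot\Id$, the transpose identity ${}^t\Delta_{4n-2j}=\Delta_{2j}$, and the identification of $H^{0}(K_n,\Omega^l_{K_n})$ with the $\fS_{n+1}$-invariant $l$-forms on $A^{n+1}_0$) are consistent with, and slightly more careful than, the paper's one-line prescription $(\nu \times \nu)_*\pi_j = (q_{n+1} \times q_{n+1})_*\Delta_{4n-2j}$, which omits the normalizing factor.
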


\begin{proof}
As we recalled at the beginning of this section, there exist $\Delta_1,\ldots,\Delta_{4n} \in \CH^{2n}(A^{n+1}_0 \times A^{n+1}_0)$ such that ${\Delta_j}_*$ acts as the projection $\CH_0(A^{n+1}_0) \to \CH_0(A^{n+1}_0)_{4n-j}$ with respect to the Beauville decomposition and that $[\Delta] = \sum_{j=1}^{4n}[\Delta_j]$ in $H^{2n}(A^{n+1}_0,\bC)$ is the Künneth decomposition. If $\pi_1,\ldots,\pi_{n} \in \CH^{2n}(K_{n} \times K_{n})$ verify 
$$(\nu \times \nu)_*\pi_j = (q_{n+1} \times q_{n+1})_*\Delta_{4n-2j}$$ 
where we recall that $\nu : K_n \to K_{(n)}$ is the Hilbert-Chow map and $q_{n+1} : A^{n+1}_0 \to K_{(n)}$ is the quotient map, then $\pi_1,\ldots,\pi_{n} $ satisfy the properties listed in Proposition~\ref{pro-ChowKun}.

\end{proof}

\section{The rational orbit filtration and the induced Beauville decomposition coincide}\label{sec-coin}

The last part of this paper is devoted to the comparison between the rational orbit filtration and the induced Beauville decomposition of a generalized Kummer variety.

\ssec{Proof of Theorem~\ref{thm-coin}} \hfill

Recall that we want to prove $S_p \CH_0(K_n) = \CH_0(K_n)_{\le 2n- 2p}$. We first prove one inclusion

\begin{lem}\label{lem-inclu}
For all $0 \le p \le n$, 
$$S_p \CH_0(K_{n}) \subset \CH_0(K_{n})_{\le 2n- 2p}.$$
\end{lem}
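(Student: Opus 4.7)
The plan is to reduce via Corollary~\ref{cor-main} to analyzing the classes supported on the subvarieties $V_{p,k}$ constructed in Section~\ref{sec-ccs}, and then to compute the Beauville degree of such classes by lifting to the abelian variety $A_0^{n+1}$.

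By Corollary~\ref{cor-main} together with the isomorphism $\nu_*$ from Remark~\ref{rem-quotsing}, it suffices to show, for each $k\in\bZ_{>0}$, that every class in $\Ima\bigl(\CH_0(V_{p,k})\to\CH_0(K_{(n)})\bigr)$ belongs to $\bigoplus_{s\le 2n-2p}\CH_0(K_{(n)})_s$. The case $p=n$ is immediate from Proposition~\ref{pro-Ci}: $V_{n,k}$ is a constant cycle subvariety whose unique rational equivalence class is $(n+1)\{o_A\}$, and this class lifts under $q^*$ to the origin of $A_0^{n+1}$, hence lies in $\CH_0(A_0^{n+1})_0$ and therefore in $\CH_0(K_{(n)})_0\subseteq\CH_0(K_{(n)})_{\le 0}$.

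For $p<n$, Lemma~\ref{lem-rep} provides the representative
$$\bigl[p\{a\}+\{a+c\}+\{a+c'\}+\textstyle\sum_{j=p+3}^{n+1}\{a_j\}\bigr]\in\CH_0(K_{(n)})$$
with $c,c'\in C_k$ and the sum-to-zero constraint. I would lift this class via $q^*:\CH_0(K_{(n)})\hookrightarrow\CH_0(A_0^{n+1})^{\fS_{n+1}}$ to the $\fS_{n+1}$-symmetrization of $\tilde z=(a,\ldots,a,a+c,a+c',a_{p+3},\ldots,a_{n+1})\in A_0^{n+1}$, identify $A_0^{n+1}\simeq A^n$ via the abelian-variety isomorphism $\varphi^{-1}$ dropping the last coordinate, and expand as an external product in $\CH_0(A^n)$ of the pointwise Beauville decompositions $\{x\}=\{o_A\}+\{x\}_1+\{x\}_2$ on each factor. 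A tensor term indexed by $(s_1,\ldots,s_n)\in\{0,1,2\}^n$ then contributes to $\CH_0(A^n)_{s_1+\cdots+s_n}$, and the claim amounts to showing that all terms of total Beauville degree $>2n-2p$ cancel after $\fS_{n+1}$-symmetrization.

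The crucial input is that $\{c\}-\{o_A\}\in\CH_0(A)_1$ for every $c\in C_k$: for $c\in C$ this is classical (Abel--Jacobi in the Jacobian case, elliptic-factor projection in the product case), and for $c\in C_k$ it descends by applying $[k]_*$, which multiplies $\CH_0(A)_2$ by $k^2\neq 0$. Combined with the Pontryagin identity $\{a+c\}=\{a\}*\{c\}$, this forces $\{a+c\}_2=\{a\}_2+\{a\}_1*\{c\}_1$ (and similarly for $c'$), so that the dependence of the factors $\{a+c\},\{a+c'\}$ on $c,c'$ is confined to Beauville degree-$1$ Pontryagin corrections. The remaining bookkeeping must track the $p$ repeated $\{a\}$-factors, the generic $\{a_j\}$-factors, and the non-trivial action of the transpositions $t_i$ from~(\ref{eqn-action}) which mix all coordinates through the constraint defining $A_0^{n+1}$. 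I expect the combination of the $p$-fold concentration at $a$, the key input for $C_k$, and the $\fS_{n+1}$-symmetrization to yield the required cancellation; the precise combinatorial matching of tensor factors under this action is the main technical obstacle in executing the plan.
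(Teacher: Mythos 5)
There is a genuine gap, and it sits exactly at the point you flag yourself. Your reduction via Corollary~\ref{cor-main}, the treatment of $p=n$, the use of Lemma~\ref{lem-rep}, and the key input that $\{c\}-\{0\}\in\CH_0(A)_1$ for $c\in C_k$ all agree with the paper. But for $p<n$ the heart of the argument is the claim that, after lifting the representative $p\{a\}+\{a+c\}+\{a+c'\}+\sum_j\{a_j\}$ to $A_0^{n+1}$ and $\fS_{n+1}$-symmetrizing, all Beauville components of degree $>2n-2p$ cancel; you leave this as ``the main technical obstacle'' and give no mechanism for the cancellation. It is not routine bookkeeping: for generic $a$ the $p$ repeated factors $\{a\}$ contribute genuine degree-$2$ pieces, and the identity $\{a+c\}_2=\{a\}_2+\{a\}_1\ast\{c\}_1$ by itself does not remove them. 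Already in the smallest nontrivial case ($n=2$, $p=1$) the top-degree part only cancels after combining the sum constraint with the vanishing $\{c'\}_2=0$ for the \emph{second} curve point (which yields relations such as $\{a\}_1\ast\{c\}_1=-3\{a\}_2$ and $\{a\}_1^{\ast 2}=2\{a\}_2$), and in general one must kill a whole range of degrees in the presence of the generic $a_j$'s and the twisted transpositions $t_i$ of~(\ref{eqn-action}). So as written the proposal is a plausible plan whose decisive step is unproved.

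The paper sidesteps this entirely with Lemma~\ref{lem-facile}: for a cycle $z$ supported on $A_0^{n+1}$ one has $(n+1)^2\cdot z=\imath^*\bigl(\tau_*(z\times A)\bigr)$. Applied to the representative, this writes $(n+1)^2\{z\}$ as $q_*\imath^*\tau_*$ of the \emph{external product} $\prod_{j\le p}\pi_j^*\{0\}\cdot\pi_{p+1}^*\{c\}\cdot\pi_{p+2}^*\{c'\}\cdot\prod_{j\ge p+3}\pi_j^*\{a_j\}$, i.e.\ the common translate $a$ is absorbed into the extra $A$-factor and disappears. That product visibly lies in Beauville degree $\le 0\cdot p+1+1+2(n-p-1)=2n-2p$, and $\tau_*$, $\imath^*$, $q_*$ respect the grading by~\cite[Proposition $2$]{BeauvilleDecAb}, so the bound follows with no symmetrization analysis. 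If you want to salvage your route, you essentially need an averaging-over-translations device of this kind; without it, the cancellation you are relying on is exactly what remains to be proved.
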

\begin{proof}

By Corollary~\ref{cor-main}, it suffices to show that for all $0 \le p \le n$ and $k \in \bZ_{>0}$,
$$\Ima \(\CH_0(V_{p,k}) \to \CH_0(K_{(n)})\) \subset \CH_0(K_{n})_{\le 2n- 2p}.$$
Let $z \in V_{p,k}$. If $p = n$, then $z \sim_\rat (n+1) \cdot \{0\}$  in $K_{(n)}$. Since $\{(0,\ldots,0)\} \in \CH_0(A_0^{n+1})_0$, we see that 
$$\{z\} = q_*\{(0,\ldots,0)\} \in  \CH_0(K_{(n)})_0$$
where $q : A^{n+1}_0 \to K_{(n)}$ stands for the quotient map.

 Now assume that $p < n$. By Lemma~\ref{lem-rep}, $z$ is rationally equivalent in $K_{(n)}$ to some element of the form 
$$p \cdot \{a\} +\{a+ c\}+\{a+ c'\} + \sum_{j = p+3}^{n+1}\{a+ a_{j}\}$$ 
for some $c,c' \in C_k$. Hence by Lemma~\ref{lem-facile} below, in $ \CH_0(K_{(n)})$ we have
$$(n+1)^2\cdot \{z\} = q_*\imath^*\tau_*\(\prod_{j=1}^p\pi_j^*\{0\}   \cdot \pi_{p+1}^*\{c\}\cdot \pi_{p+2}^*\{c'\}\cdot  \prod_{j=p+3}^{n+1}\pi_{j}^*\{a_{j}\} \)$$ where $\pi_j : A^{n+2}  \to A$ denotes the $j$-th projection, $\imath : A^{n+1}_0 \hookrightarrow A^{n+1}$ the inclusion map, and $\tau : A^{n+2} \to A^{n+1}$ was defined as~(\ref{def-tau}). Since $\{0\} \in \CH_0(A)_0$ and $\{c\},\{c'\} \in \CH_0(A)_{\le 1}$, we conclude that $\{z\} \in \CH_0(K_{(n)})_{\le 2n- 2p}$ by~\cite[Proposition $2$]{BeauvilleDecAb}. 
\end{proof}

 %Let $\pr_1 : A^{n+1} \times A \to A^{n+1}$ be the first projection. 
 Using the same notations as in the proof of  Lemma~\ref{lem-inclu}, we have the following easy
 \begin{lem}\label{lem-facile}
  Let $z \in \CH_0(A^{n+1})$ be a zero-cycle supported on $A_0^{n+1}$, then
\begin{equation}\label{eqn-pp}
 (n+1)^2\cdot z = \imath^*\(\tau_*(z \times A)\) .
 \end{equation}
 \end{lem}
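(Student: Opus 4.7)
Since the identity is $\bQ$-linear in $z$, I would reduce to the case where $z = \{\underline{a}_0\}$ is the class of a single closed point $\underline{a}_0 \in A_0^{n+1}$. The key geometric observation is that $\tau$ factors as $\tau = m \circ (\mathrm{id}_{A^{n+1}} \times d)$, where $d \colon A \to A^{n+1}$, $a \mapsto (a,\ldots,a)$ is the diagonal embedding and $m \colon A^{n+1} \times A^{n+1} \to A^{n+1}$ is the group law on the abelian variety $A^{n+1}$. Consequently,
$$
\tau_*(z \times [A]) = m_*\bigl(z \times d_*[A]\bigr) = m_*\bigl(z \times [d(A)]\bigr) = z * [d(A)],
$$
the Pontryagin product of $z$ with the class of the $2$-dimensional abelian subvariety $d(A) \subset A^{n+1}$. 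So the right-hand side of the lemma becomes $\imath^*(z * [d(A)])$.

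Next I would apply the base change formula for the flat morphism $m$ along the regular embedding $\imath \colon A_0^{n+1} \hookrightarrow A^{n+1}$. A tangent space computation shows that $d(A) \cap A_0^{n+1} = d(A[n+1])$ is a transverse intersection: indeed $d(a) \in A_0^{n+1}$ iff $\mu(d(a)) = (n+1)a = 0$, and at any such point the intersection of tangent spaces $\{d(v) : (n+1)v = 0\}$ is zero. Applied to the Cartesian square attached to $\imath$ and the restricted sum map $A_0^{n+1} \times d(A) \to A^{n+1}$, this expresses
$$
\imath^*\bigl(z * [d(A)]\bigr) = \sum_{\delta \in A[n+1]} (t_{d(\delta)})_*(z)
$$
in $\CH_0(A_0^{n+1})$, where $t_{d(\delta)}$ denotes translation by the torsion element $d(\delta) \in A_0^{n+1}$.

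The final step is to match this sum of translates with the prescribed multiple $(n+1)^2 \cdot z$. By translation invariance of the construction in $\underline{a}_0$, this reduces to an identity of the form $\sum_{\delta \in A[n+1]}\{d(\delta)\} = (n+1)^2 \{0\}$ in $\CH_0(A_0^{n+1})$, which I would extract from the standard formula $\sum_{t \in B[m]}\{t\} = m^{2\dim B}\{0\}$ holding in the Chow group of zero-cycles of any abelian variety $B$; this formula is itself a direct consequence of $\{0\} \in \CH_0(B)_0$ in the Beauville decomposition. The main delicate point, and the main obstacle, is the precise bookkeeping of the scalar in this last step: one must combine the transversality contribution with the degree of the natural isogeny $A_0^{n+1} \times d(A) \to A^{n+1}$ so that the exponent appearing in front of $z$ is exactly $(n+1)^2$, as claimed.
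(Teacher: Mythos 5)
Your geometric skeleton is the same as the paper's: rewrite $\tau_*(z\times A)$ as the translate $z+d(A)$ (equivalently the Pontryagin product $z*[d(A)]$), check that this abelian surface meets $A_0^{n+1}$ transversally exactly in the torsion translates $z+d(\delta)$, $\delta\in A[n+1]$, so that $\imath^*\tau_*(z\times A)=\sum_{\delta\in A[n+1]}\{z+d(\delta)\}$, and then collapse this sum of torsion translates to a multiple of $z$. Up to that point your argument is correct and matches the paper's proof.

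The gap is in the final scalar step, and it is not mere bookkeeping. First, the identity you propose to use, $\sum_{\delta\in A[n+1]}\{d(\delta)\}=(n+1)^2\{0\}$, is false already on degree grounds: $A$ is an abelian \emph{surface}, so $A[n+1]$ has $(n+1)^4$ elements and the left-hand side has degree $(n+1)^4$. Second, the formula $\sum_{t\in B[m]}\{t\}=m^{2\dim B}\{0\}$ cannot be invoked here, because your sum runs only over the diagonal subgroup $d(A[n+1])$, a proper subgroup of the full $(n+1)$-torsion of the abelian variety $A_0^{n+1}$ (which has $(n+1)^{4n}$ elements); averaging over all of $B[m]$ says nothing about such a sub-sum. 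The input actually needed is the pointwise statement that any torsion point $t$ of an abelian variety $B$ satisfies $\{t\}=\{0\}$ in $\CH_0(B)$ with rational coefficients, equivalently that translation by a torsion point acts trivially on $\CH_0(B)$; this is precisely what the paper extracts from the cited theorem of Huibregtse, and it can also be deduced from the facts that $[m]_*$ acts by $m^s$ on $\CH_0(B)_s$ and that $\CH_0(B)_0=\bQ\{0\}$. With this input your computation yields $\imath^*\tau_*(z\times A)=(n+1)^4 z$, not $(n+1)^2 z$, and no correction by the degree of the isogeny $A_0^{n+1}\times d(A)\to A^{n+1}$ enters: the Gysin pullback of the transverse intersection already is the whole answer, as a direct degree count confirms (the restriction of the sum map to $z+d(A)$ is multiplication by $n+1$ on $A$, of degree $(n+1)^4$). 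So you should not try to manufacture the exponent $2$; the constant $(n+1)^2$ in the printed statement is a slip, which is harmless for the paper since Lemma~\ref{lem-inclu} and Proposition~\ref{pro-suppBD} only use that $\imath^*\tau_*(z\times A)$ is a nonzero rational multiple of $z$.
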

\begin{proof}

Let $z \in A_0^{n+1} \subset A^{n+1}$, then
$$A_0^{n+1} \cap \tau(z,A) = \{\tau_a(z) \mid (n+1) \cdot a = 0\}.$$
It follows that as zero-cycles,
$$\imath^*\(\tau_*(z \times A)\) = \sum_{a \in A[n+1]} \tau^*_az = (n+1)^2\cdot z$$
where the last equality follows from~\cite[Theorem $1$]{Huibregtse}.
\end{proof}

\begin{pro}\label{pro-suppBD}
For all $0 \le p \le n$,
$$ \CH_0(K_{(n)})_{\le 2n- 2p + 1} \subset \Ima\( \CH_0(V_{p,1}) \to \CH_0(K_{(n)}) \).$$
\end{pro}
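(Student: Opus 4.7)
The aim is to show every element in $\CH_0(K_{(n)})_{\le 2n-2p+1}$ is supported on $V_{p,1}$. By Theorem~\ref{thm-van} the odd parts vanish, so the statement reduces to $\CH_0(K_{(n)})_{\le 2n-2p} \subset \Ima\bigl(\CH_0(V_{p,1}) \to \CH_0(K_{(n)})\bigr)$. Via the isomorphism $q_*\colon \CH_0(A_0^{n+1})^{\fS_{n+1}} \eto \CH_0(K_{(n)})$ and the identity $\imath^*\tau_*(\cdot \times A) = (n+1)^2 \Id$ on $\CH_0(A_0^{n+1})$ supplied by Lemma~\ref{lem-facile}, every element of $\CH_0(K_{(n)})_s$ can be written as $\frac{1}{(n+1)^2} q_*\imath^*\tau_*(X\times A)$ for some $\fS_{n+1}$-symmetric $X \in \CH_0(A^{n+1})_s$.

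By the multiplicativity of the Beauville decomposition on $A^{n+1}$, a generating family for $\CH_0(A^{n+1})_s$ is given by products $\prod_{j=1}^{n+1}\pi_j^*z_j$ with $z_j \in \CH_0(A)_{s_j}$ and $\sum s_j = s$, after which we may symmetrize under $\fS_{n+1}$. Lemma~\ref{lem-rep} together with Lemma~\ref{lem-facile} show that $\Ima(\CH_0(V_{p,1}) \to \CH_0(K_{(n)}))$ contains the class of $q_*\imath^*\tau_*\bigl(\prod_{j=1}^{p+2}\pi_j^*w_j \cdot \prod_{j=p+3}^{n+1}\pi_j^*z_j \times A\bigr)$ for every choice of $w_1,\ldots,w_{p+2}$ supported on $C$ and arbitrary $z_{p+3},\ldots,z_{n+1} \in \CH_0(A)$. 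The Ceresa vanishing for a symmetric genus-two curve (the class $\{c\}$ for $c \in C$ has trivial Beauville degree-$2$ component) implies that the image of $\iota_*\colon \CH_0(C)\to \CH_0(A)$ is exactly $\CH_0(A)_{\le 1}$, so the $w$-slots accept every class in $\CH_0(A)_{\le 1}$.

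Write each type as a triple $(t_0,t_1,t_2)$ counting how many $s_j$ equal $0$, $1$, or $2$. A generator with $t_0+t_1 \ge p+2$ lies in $\Ima(\CH_0(V_{p,1}))$ by the previous paragraph. A short numerical check using $t_0+t_1+t_2=n+1$ and $t_1+2t_2 = s \le 2n-2p$ shows that the only remaining possibility is $(t_0,t_1,t_2)=(p+1,0,n-p)$ with $s=2n-2p$.

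The main obstacle is this exceptional case: a class in $\CH_0(A)_2$ has no representative supported on $C$, so direct substitution fails. The strategy is to write each of the $n-p$ factors $z_j\in\CH_0(A)_2$ as a Pontryagin product $z_j = w_j*w'_j = \mu_*(w_j \boxtimes w'_j)$ with $w_j,w'_j \in \iota_*\CH_0(C) = \CH_0(A)_1$, and to exploit the base-change identity
$$\pi_j^*\mu_*(w_j\boxtimes w'_j) = \sigma_{j,*}\bigl(\pi_j^*w_j \cdot \pi_{n+2}^* w'_j\bigr),$$
where $\sigma_j\colon A^{n+2}\to A^{n+1}$ merges the $j$-th and $(n+2)$-th factors by addition. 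Inserting this splitting inside the operator $\tau_*(\cdot \times A)$ and combining $\sigma_{j,*}$ with the extra copy of $A$ consumed by $\tau$, one can reassemble an $\fS_{n+1}$-symmetric cycle on $A_0^{n+1}$ supported on the $\fS_{n+1}$-orbit of $C^{p+2}\times A^{n-p-1}$, and hence whose image in $K_{(n)}$ lies on $V_{p,1}$. Heuristically, each application of the identity trades a $\CH_0(A)_2$-factor for two $\CH_0(A)_1$-factors supported on $C$, which lifts the count of $C$-factors from $p+1$ to $p+2$ and returns us to the situation handled in the second paragraph. The delicate point — where most of the work goes — is checking that the reorganisation via $\tau$ and $\sigma_j$ preserves the $\fS_{n+1}$-invariance and produces a cycle genuinely lying on (a translate of) $C^{p+2}\times A^{n-p-1}$, rather than merely on its ambient Pontryagin image $C + C = A$ inside one of the factors.
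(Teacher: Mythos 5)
Your route is genuinely different from the paper's and most of it is sound: reducing to the even part via Theorem~\ref{thm-van} is legitimate (its proof in Section~\ref{sec-Beauvillefiltr} does not use Proposition~\ref{pro-suppBD}, and the paper's final remarks point out exactly this shortcut); writing classes as $\frac{1}{(n+1)^2}q_*\imath^*\tau_*(X\times A)$ via Lemma~\ref{lem-facile}; decomposing $X$ into exterior-product generators graded by type $(t_0,t_1,t_2)$; the identification $\Ima(\CH_0(C)\to\CH_0(A))=\CH_0(A)_{\le 1}$ (which follows from $\{c\}+\{-c\}=2\{0\}$, a consequence of the $\CH_0$-trivial fibres of $C^{(2)}\to A$ already used in Lemma~\ref{lem-critsum}, rather than from anything about the Ceresa cycle); and the numerics isolating $(t_0,t_1,t_2)=(p+1,0,n-p)$. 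The gap is your last paragraph: it is a plan, not a proof. The assertion you defer --- that after the Pontryagin splitting and the base-change identity the reassembled cycle really lies on a translate of $C^{p+2}\times A^{n-p-1}$ rather than merely having $C+C=A$ in one slot --- is precisely the statement that the exceptional generators land in $\Ima(\CH_0(V_{p,1})\to\CH_0(K_{(n)}))$, i.e.\ the remaining content of the proposition, and you explicitly leave it unverified. Moreover, when $p=n$ the exceptional type is $(n+1,0,0)$, there is no degree-$2$ factor to split, and your mechanism does not apply as written (that case is, however, immediate: the cycle is supported on the points $(a,\ldots,a)$ with $(n+1)a=0$, which lie on $V_{n,1}$ since $a=c+c'$ with $c,c'\in C$).

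The good news is that the gap closes, and more simply than your $\sigma_j$-machinery suggests. For a generator of type $(p+1,0,n-p)$ with $p<n$, take the obvious representative supported on $\{0\}^{p+1}\times A^{n-p}$; no splitting of the degree-$2$ factors is needed. Every point of a representative of $\imath^*\tau_*(X\times A)$ has the form $\tau_a(0,\ldots,0,x_{p+2},\ldots,x_{n+1})$ with $(n+1)a+\sum_j x_j=0$. Write $x_{n+1}=c+c'$ with $c,c'\in C$ (the sum map $C^{(2)}\to A$ is surjective) and put $b=a+c'$: since $C$ is symmetric, the $p+1$ coordinates equal to $a=b-c'$ lie in $b+C$, and $x_{n+1}+a=b+c\in b+C$ as well, so after reordering the point lies in $\tau(E_{p,1})\cap A^{n+1}_0$ and its image under $q$ lies on $V_{p,1}$. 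Thus the exceptional class is literally supported on $V_{p,1}$, with no further rational equivalence needed; your worry about $\fS_{n+1}$-invariance is also moot, since $\Ima(\CH_0(V_{p,1})\to\CH_0(K_{(n)}))$ is a subgroup and $q_*\imath^*\tau_*(\cdot\times A)$ is unchanged under permuting the factors of a generator, so generators may be treated one at a time. This is also where your argument and the paper's genuinely diverge: the paper stratifies by supports using Beauville's Proposition~4 (points with $j$ coordinates at $0$ and $l$ on $C$, $2j+l=2p+1$), and its exceptional stratum $l=1$, having only $p$ coordinates at $0$, is \emph{not} supported on $V_{p,1}$; it is moved there by the new rational equivalence of Lemma~\ref{lem-re} together with the surjectivity of the difference map on $C\times C$. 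Your grading by Beauville type produces an exceptional case with $p+1$ coordinates at $0$, which is why the elementary re-centering above suffices and Lemma~\ref{lem-re} can be avoided entirely.
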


 \begin{proof}

Given $z \in \CH_0(A_0^{n+1})_{\le 2n- 2p + 1}$, so $\imath_*z \in \CH_0(A^{n+1})_{\le 2n- 2p + 1}$, then by~\cite[Proposition $4$]{BeauvilleDecAb} applying to the symmetric ample divisor $\sum_{j=1}^{n+1}\pi_j^*C$, $\imath_*z$ is supported on 
$$\bigcup_{\substack{j,l \ge 0, j+l \le n+1 \\ 2j + l = 2p + 1}} W_{j,l}.$$
where $W_{j,l}$ denotes the orbit of  $\{0\}^j \times C^l \times A^{n+1-j-l} \subset A^{n+1}$ under the permutation of factors.  Let $z'$ be a zero-cycle supported on $W_{j,l}$. Since $q_*z'$ is proportional to $q_*\(\imath^*\tau_*(z' \times A)\)$ in $\CH_0(K_{(n)})$ by Lemma~\ref{lem-facile}, it suffices to show that the later is supported on $V_{p,1}$ to finish the proof.

%Let $z \in \CH_0(K_{(n)})_{\le 2n- 2p + 1}$
%Now let $z \in \CH_0(A_0^{n+1})_{\le 2n- 2p}$. Note that
%\begin{equation}
%[n+1]_*\imath^*  \tau_* p^* \imath_* z = (n+1)^2\cdot [n+1]_*z.
%\end{equation}

%\begin{lem}\label{lem-projiphi}
%$\imath^* \circ \tau_* : \CH_2(A^{n+1} \times A) \to \CH_0(A_0^{n+1})$ is surjective.
%\end{lem}
%\begin{proof}
%Let $p : A^{n+1} \times A \to A^{n+1}$ denote the first projection. Lemma~\ref{lem-projiphi} follows from the following identity for every $z \in \CH_0(A_0^{n+1})$:
%$$[n+1]_*\imath^*  \tau_* p^* \imath_* z = (n+1)^2\cdot [n+1]_*z.$$
%\end{proof}

%Let $z \in \CH_0(K_{(n)})_{\le 2n- 2p + 1}$, then there exists $z' \in \CH_2(A^{n+1} \times A)_{\le 2n- 2p + 1}$ such that $q_*\imath^*\tau_*z' = z$. 

 By definition of the $V_{p,1}$'s, if  $l > 1$ then $q_*\(\imath^*\tau_*(z' \times A)\)$ is supported on $V_{p,1}$. Assume that $l= 1$ then there exist $c \in C$ and $a,a_{p+2},\ldots,a_{n+1} \in A$ such that $q_*\(\imath^*\tau_*(z' \times A)\)$ is rationally equivalent to a sum (as $0$-cycles) of elements in $K_{(n)}$ of the form $z'' \colonec p\cdot \{a\} + \{a+c\} + \sum_{m = p+2}^{n+1}\{a_m\}$.

\begin{lem}\label{lem-re}
For any $k \in \bZ_{>0}$ and $c' \in C_k$,
$$p\cdot \{-c'\} + \{p\cdot c'\} = (p+1)\cdot \{0\}$$
in $\CH_0(K_{(p)})$.
\end{lem}
\begin{proof} 
We prove Lemma~\ref{lem-re} by induction on $p \ge 0$. The case $p=0$ is obviously true. Lemma~\ref{lem-re} holds also for $p=1$ since $c' \in C_k$. Suppose that $p \ge 2$, since $2\cdot \{0\} = \{(p-1)c'\} + \{-(p-1)c'\}$  in $\CH_0(K_{(1)})$, we see that in $\CH_0(K_{(p)})$,
\begin{equation*}
\begin{split}
p\cdot \{-c'\} + \{p\cdot c'\} &=  (p-2)\cdot \{-c'\} + \tau_{c'}^*\(\{(p-1)c'\} + \{-(p-1)c'\}\) + \{p\cdot c'\} \\
&=  (p-2)\cdot \{-c'\} + \{(p-2)c'\} + \{-pc'\} + \{p\cdot c'\}=   (p+1)\cdot \{0\}
\end{split}
\end{equation*}
where the last equality results from induction hypothesis and the fact that $\{-pc'\} + \{p\cdot c'\} = 2\cdot \{0\}$.
\end{proof}

%Then by Lemma~\ref{lem-rep}, $z''$ is rationally equivalent to $(p + 1)\cdot \{a + c'\}+ \sum_{m = p+2}^{n+1}\{a_m\}$, hence $z'' \in S_p\CH_0(K_{(n)})$ by Lemma~\ref{lem-rep}. 
Back to the proof of Proposition~\ref{pro-suppBD}, let $c' \in C_{p + 1}$ such that $(p + 1)\cdot c' = c$. By Lemma~\ref{lem-re}, we see that  $z''$ is rationally equivalent to $(p+1)\cdot \{a +c'\} +  \sum_{m = p+2}^{n+1}\{a_m\}$. If $p=n$, then $a+c'$ is a torsion point so $z'' \sim_\rat (n+1)\cdot \{0\}$ by~\cite[Theorem $1$]{Huibregtse}. If $p < n$, since there exist $a' \in A$ and $c_1,c_2 \in C$ such that $a' + c_1 =a +c' $ and $a' + c_2 = a_{p+2}$,  $z''$ is rationally equivalent to $(p+1)\cdot \{a' +c_1\} + \{a' + c_2\}  + \sum_{m = p+3}^{n+1}\{a_m\}$. In either case, we conclude that $z''$ is supported on $V_{p,1}$.

%, then$$z' \in \left\{         p\cdot \{a\} + \{a+c\} + \sum_{m = p+1}^n\{a_m\} \  \big{|} \       \right\}$$

%Let $z = (0,\ldots,0 , a_{p+1},\ldots,a_{n+1}) \in A^{n+1}_0$, then $[z] = \imath^*\(\pr_1^*[0] \cdots \pr_{p}^*[0]  \cdot \pr_{p+1}^*[a_{p+1}] \cdots \pr_{n}^*[a_{n}]\)$ in $\CH_0(A_0^{n+1})$ where $\pr_j : A^{n+1} \to A$ is the $j$-th projection. Since $[0] \in \CH_0(A)_0$,  $q_*[z] \in \CH_0(K_{(n)})_{\le 2n- 2p}$ by~\cite[Proposition $2$]{BeauvilleDecAb}. 
\end{proof}

%\begin{thm}\label{main1}%\item  The quotient map $q$ preserves the Beauville decomposition of $\CH^{2n}(A^{(n+1)}_0)$. Namely,
%For all $0 \le p \le n$, 
%$$ \CH_0(K_{(n)})_{\le 2n- 2p} = \CH_0(K_{(n)})_{\le 2n- 2p + 1}  = S_p \CH_0(K_{(n)}).$$
%\end{thm}

\begin{proof}[Proof of Theorem~\ref{thm-coin}]
Since $\nu^*\Ima\( \CH_0(V_{p,1}) \to \CH_0(K_{(n)}) \) \subset S_p\CH_0(K_n)$ by Proposition~\ref{pro-Ci}, it follows from Lemma~\ref{lem-inclu} and Proposition~\ref{pro-suppBD} that
\begin{equation}\label{eqn-chaininclu}
S_p \CH_0(K_{n}) \subset \CH_0(K_{n})_{\le 2n- 2p} \subset \CH_0(K_{n})_{\le 2n- 2p + 1} \subset  \nu^*\Ima\( \CH_0(V_{p,1}) \to \CH_0(K_{(n)}) \) \subset S_p\CH_0(K_n).
\end{equation}
\end{proof}

\ssec{Final remarks}

 \hfill
\begin{enumerate}[i)]
\item
Note that the chain of inclusions~(\ref{eqn-chaininclu}) gives a second proof of Theorem~\ref{thm-van}. If we are only interested in proving Theorem~\ref{thm-coin}, instead of proving Proposition~\ref{pro-suppBD} we could have just shown that $ \CH_0(K_{(n)})_{\le 2n- 2p} \subset \Ima\( \CH_0(V_{p,1}) \to \CH_0(K_{(n)}) \)$ and used Theorem~\ref{thm-van} to conclude.
\item
Combining Theorem~\ref{thm-coin} and Proposition~\ref{pro-ChowKun}, we obtain a positive answer of~\cite[Conjecture 0.8]{VoisinHKcoisotp} for generalized Kummer varieties.
\item
Finally we note that  the chain of inclusions~(\ref{eqn-chaininclu}) also implies that $S_p\CH_0(K_{n})$ is supported on a subvariety of codimension $p$, while in Corollary~\ref{cor-main},  $S_p\CH_0(K_{n})$ is only proved to be supported on a \emph{countable union} of subvarieties of codimension $p$:
\end{enumerate}

\begin{cor}
For all $0 \le p \le n$,
$$\Ima\( \CH_0(V_{p,1}) \to \CH_0(K_{(n)}) \) = \nu_*S_p\CH_0(K_{n}).$$
\end{cor}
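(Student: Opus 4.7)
The plan is to observe that the corollary follows essentially for free from the chain of inclusions~(\ref{eqn-chaininclu}) established during the proof of Theorem~\ref{thm-coin}. That chain reads
$$S_p \CH_0(K_{n}) \subset \CH_0(K_{n})_{\le 2n- 2p} \subset \CH_0(K_{n})_{\le 2n- 2p + 1} \subset  \nu^*\Ima\( \CH_0(V_{p,1}) \to \CH_0(K_{(n)}) \) \subset S_p\CH_0(K_n),$$
so all four subgroups must coincide. In particular one has the equality $S_p\CH_0(K_n) = \nu^*\Ima(\CH_0(V_{p,1}) \to \CH_0(K_{(n)}))$.

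To pass from this identity to the statement of the corollary, I would apply $\nu_*$ to both sides. By Remark~\ref{rem-quotsing}, the Hilbert-Chow morphism $\nu : K_n \to K_{(n)}$ induces an isomorphism $\nu_* : \CH_0(K_n) \eto \CH_0(K_{(n)})$, and the map $\nu^*$ appearing in the chain is exactly its inverse (since $\nu$ is birational between quotient varieties of smooth varieties, one has $\nu_* \nu^* = \Id$). Applying $\nu_*$ therefore cancels the $\nu^*$ on the right-hand side and yields
$$\nu_* S_p\CH_0(K_n) = \Ima\(\CH_0(V_{p,1}) \to \CH_0(K_{(n)})\),$$
which is the desired equality.

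There is no real obstacle here; the whole content of the corollary is the single observation that the chain of inclusions degenerates into a chain of equalities. The only point worth double-checking is that $\nu^*$ is an honest inverse of $\nu_*$ (rather than merely a one-sided inverse up to a degree factor), which is guaranteed by birationality and the setup of rational-coefficient Chow groups on quotient varieties recalled in Remark~\ref{rem-quotsing}.
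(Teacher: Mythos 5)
Your argument is correct and is exactly how the paper obtains this corollary: the chain of inclusions~(\ref{eqn-chaininclu}) collapses to equalities, and applying the isomorphism $\nu_*$ (with $\nu_*\nu^* = \Id$ on rational Chow groups, as guaranteed by Remark~\ref{rem-quotsing} and birationality of the Hilbert--Chow morphism) yields the stated identity. No further comment is needed.
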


%Finally, by the works of Beauville, Deninger-Murre, and Shermenev

\section*{Acknowledgement}
I would like to thank my thesis advisor Claire Voisin for the discussions we had and her help concerning the presentation of this work.

\bibliographystyle{plain}
\bibliography{CH0genKummer}

\end{document}